\def\R{\mathbb{R}}
\newcommand{\sigMod}{s}
\newcommand{\muMod}{b}
\newcommand{\errorModtotal}{\epsilon}
\newcommand{\errorMod}{\epsilon^{(1)}}
\newcommand{\errorModTwo}{\epsilon^{(2)}}
\newcommand{\errorModThree}{\epsilon^{(3)}}
\newcommand{\errorModFour}{\epsilon^{(4)}}
\newcommand{\errorModFive}{\epsilon^{(5)}}
\newcommand{\ingroupPara}{\left\|\bm{\pi}\right\|_1^{g(j),j}}
\newcommand{\ingroupParaI}{\left\|\bm{\pi}\right\|_1^{g(i),i}}
\newcommand{\ingroupParaL}{\left\|\bm{\pi}\right\|_1^{g(l),l}}
\newcommand{\E}{\ensuremath{\operatorname{\mathbb{E}}}}
\newcommand{\V}{\ensuremath{\operatorname{Var}}}
\def\No{\operatorname{Normal}}
\def\pij{\widehat{\E A}_{ij}}
\def\N{\mathbb{N}}
\def\diag{\operatorname{diag}}
\def\ingroup{ \delta_{g(i)= g(j)}}
\def\NOTingroup{ \delta_{g(i) \neq g(j)}}
\def\ingrouplm{ \delta_{g(l) = g(m)}}
\let\originalleft\left
\let\originalright\right
\renewcommand{\left}{\mathopen{}\mathclose\bgroup\originalleft}
\renewcommand{\right}{\aftergroup\egroup\originalright}
\theoremstyle{plain}
\newtheorem{Theorem}{Theorem}
\newtheorem*{corollary}{Corollary}
\newtheorem{definition}{Definition}
\newtheorem{appxtheorem}{Theorem}[section]
\newtheorem{appxcorollary}{Corollary}[section]
\newtheorem{appxlemma}{Lemma}[section]
\pgfplotsset{compat=newest}
\begin{document}

\title{Network modularity in the presence of covariates}

\author{Beate Franke$^1$ \and Patrick J.\ Wolfe$^{1,2}$}

\date{$^1$Department of Statistical Science, University College London \newline%
 $^2$Department of Computer Science, University College London}

\maketitle

\begin{abstract}
We characterize the large-sample properties of network modularity in the presence of covariates, under a natural and flexible nonparametric null model.  This provides for the first time an objective measure of whether or not a particular value of modularity is meaningful. In particular, our results quantify the strength of the relation between observed community structure and the interactions in a network.  Our technical contribution is to provide limit theorems for modularity when a community assignment is given by nodal features or covariates. These theorems hold for a broad class of network models over a range of sparsity regimes, as well as weighted, multi-edge, and power-law networks.  This allows us to assign $p$-values to observed community structure, which we validate using several benchmark examples in the literature.  We conclude by applying this methodology to investigate a multi-edge network of corporate email interactions.

\vspace{\baselineskip}%
\noindent Key words: central limit theorems, degree-based network models, network community structure, nonparametric statistics, statistical network analysis
\end{abstract}

A fundamental challenge in modern science is to understand and explain network structure: in particular, the tendency of nodes in a network to connect in \emph{communities} based on shared characteristics or function. Scientists inevitably observe not only network nodes and their connections, but also additional information in the form of covariates. Most analysis methods fail to exploit this information when attempting to explain network structure, and instead assign communities based solely on the network itself.  This leads to a loss of interpretability and presents a barrier to understanding.  We solve this problem, by showing how to decide whether communities defined by covariates lead to a valid summary of network structure.  In the student friendship network shown in Fig.\ \ref{Fig:Addhealth}, for example, this means we can evaluate whether communities based on common gender, race, or year in school can explain the observed structure of the friendships.

The strength of community structure in networks is most often measured by modularity \cite{PhysRevE.69.026113}, which is intuitive and practically effective but until now has lacked a sound theoretical basis. We derive modularity from first principles, give it a formal statistical interpretation, and show why it works in practice.  Moreover, by acknowledging that different community assignments may explain different aspects of a network's observed structure, we extend the applicability of modularity beyond its typical use to find a single ``best'' community assignment.

We use covariates to define community assignments, and then prove that modularity quantifies how well these covariates explain network structure. We show a fundamental limit theorem for modularity in this context: in the presence of covariates, it behaves like a Normal random variable for large networks whenever there is a lack of community structure. This allows us to translate modularity into a probability (a $p$-value), enabling for the first time its use to draw defensible, repeatable conclusions from network analysis.

Our main technical contribution is a flexible, nonparametric approach to quantify the strength of observed community structure. Most work assumes a single \emph{unobserved} or latent community assignment (e.g., stochastic block models \cite{holland1983stochastic} and latent space models \cite{hoff2002latent}). Hoff et al.\ \cite{hoff2002latent} and Zhang et al.\ \cite{Levina2015} both estimate latent community structure, while adjusting for the varying effects of covariates. Fosdick and Hoff \cite{Fosdick2015} simultaneously model covariates and latent structure, providing a test for independence.  In contrast, we derive limit theorems to evaluate \emph{observed} community structure implied by the covariates themselves.

The existing statistical literature on modularity has focused on more basic parametric approaches.  For example, the authors of \cite{Arias-castro2009} and \cite{bickel2015hypothesis} model all edges as equally likely Bernoulli random variables. In contrast, we take a nonparametric approach: using a single parameter per node, we model only the expectation of each edge \cite{Chung2002}. This allows for individual node-specific differences but avoids specific distributional assumptions on the edges. Our results apply to a broad class of network models, allowing us to treat (among others) power-law networks, weighted networks, and those with multiple edges.

\begin{figure}[t!]
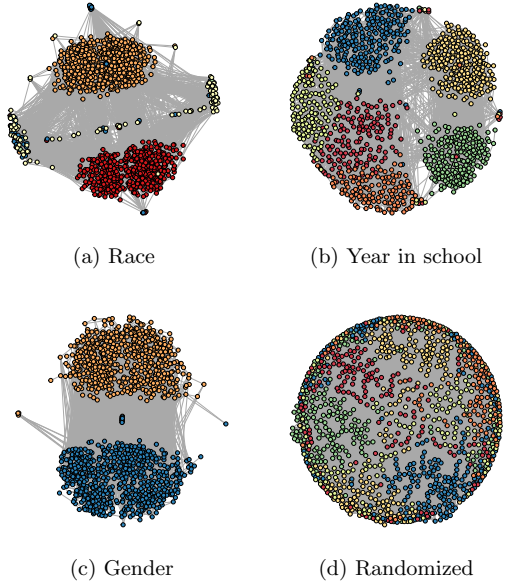

\centering
\scalebox{0.9}{\input{Addhealth_race}}	\hspace{0.4cm}
\scalebox{0.9}{\input{Addhealth_grade}} \\[0.4cm] 	
\scalebox{0.9}{\input{Addhealth_gender}}	\hspace{0.4cm}
\scalebox{0.9}{\input{Addhealth_random}}				
\caption{\label{Fig:Addhealth}A student friendship network illustrated for four different community assignments, each defined by a covariate \cite{Fosdick2015, resnick1997protecting}.\vspace{-0.5\baselineskip}}			
\end{figure}
\section{Network modularity in the presence of covariates}

Two essential ingredients are necessary to understand modularity in the presence of covariates: first, a framework to allow for a formal interpretation of modularity as a measure of statistical significance; and second, the use of this framework to evaluate a covariate-based community assignment. We now describe each of these ingredients in turn.

First, to interpret modularity as a measure of statistical significance, we must recognize it as an estimator of a population quantity. Let $g(\cdot)$ denote an assignment of nodes into groups (i.e., communities), and write $\ingroup = 1$ when nodes $i$ and $j$ are assigned to the same group, and $0$ otherwise.  Denote by $A_{ij}$ the strength of an edge (e.g., a count or a weight) between nodes $i$ and $j$, and by $d_i = \sum_{j\neq i} A_{ij}$ the degree of the $i$th node. Then, modularity as defined in \cite{PhysRevE.69.026113} is
\begin{align}\label{Def:modularityempirical}
	 \widehat{Q} = \sum_{j=1}^n \sum_{i<j} \left[A_{ij} - \frac{d_i d_j}{\sum_{l=1}^n d_l} \right] \ingroup.
\end{align}

Modularity contrasts an observed edge $A_{ij}$ with the ratio $d_i d_j/\sum_l d_l$ whenever nodes $i$ and $j$ are in the same community. Now consider replacing $d_i d_j/\sum_l d_l$ by $\E A_{ij}$, the expected value of an edge under a given model:
\begin{align}\label{Def:modularity}
		Q = \sum_{j=1}^n \sum_{i<j} \left[A_{ij} - \E A_{ij} \right] \ingroup.
\end{align}
We recognize $Q$ in Eq.\ \eqref{Def:modularity} as a sum of signed residuals (observed minus expected values) $A_{ij} - \E A_{ij}$.  If the model for each $\E A_{ij}$ posits the \emph{absence} of community structure, then a large positive value of $Q$ indicates the \emph{presence} of such structure (more within-group edges than expected). Figure \ref{Fig:Addhealth} illustrates this effect: the visible community structure in Figs.\ \ref{Fig:Addhealth}a--c is obscured in Fig.\ \ref{Fig:Addhealth}d when communities are assigned at random.  Moreover, using $d_i d_j/\sum_l d_l$ as a proxy for $\E A_{ij}$, we see that modularity $\smash{\widehat{Q}}$ as defined in Eq.\ \eqref{Def:modularityempirical} is an estimator of $Q$ in Eq.\ \eqref{Def:modularity}. We will return to this point in the next section.

Second, to interpret covariate-based community structure, we must recognize that different community assignments reveal different structural aspects of a network.  Figures\ \ref{Fig:Addhealth}a--c illustrate this point using a student friendship network grouped by gender, race, and year in school.  Covariates such as these define distinct community assignments, each of which relates the covariate in question to the observed network structure.

A key insight is that rather than maximizing modularity to obtain a single ``best'' community assignment, we may instead use modularity to measure the strength of an observed community structure. If a particular community assignment is given by a covariate, then modularity allows us to quantify the explanatory value of this covariate for the observed structure of the network.
			
\section{Main result: A limit theorem for modularity}

Our main result is a practical tool to understand objectively whether a covariate captures the structure of the interactions in a network. Technically, we derive a theorem quantifying the large-sample behavior of modularity in the setting above. In particular, if the null model of Definition \ref{Def:DegreebasedModel} below is in force, then modularity in the presence of covariates behaves like a Normal random variable. This enables us to associate a $p$-value with any observed community structure, quantifying how unlikely it is (under the null) to observe a community structure \emph{at least as extreme as} the one we observe.

\begin{Theorem}[Central limit theorem for modularity]
\label{Thm:DensityM}
Suppose the null model of Definition \ref{Def:DegreebasedModel} below is in force, and consider a sequence of networks where for each $n$ we observe a fixed (non-random) group assignment $g(1), g(2), \ldots, g(n)$.  Then as long as the number of groups grows strictly more slowly than $n$, there exist constants $\muMod$ and $\sigMod$ for each $n$ such that as $n \to \infty$,
\begin{equation*}
	\frac{\widehat{Q}- \muMod}{\sigMod} \stackrel{d}{\rightarrow} \No(0,1).
\end{equation*}
\end{Theorem}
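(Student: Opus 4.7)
The plan is to decompose $\widehat{Q}$ into an oracle modularity $Q$ from Eq.~\eqref{Def:modularity}, which uses the true $\E A_{ij}$, plus a remainder
\[
R_n = \sum_{j=1}^n \sum_{i<j}\left[\E A_{ij} - \frac{d_i d_j}{\sum_l d_l}\right]\ingroup,
\]
so that $\widehat{Q} = Q + R_n$. It then suffices to establish a CLT for an appropriate rescaling of $Q$ together with an approximation $R_n = L_n + o_P(\sigMod)$, where $L_n$ is a linear functional of the independent centered edges $A_{ij}-\E A_{ij}$. Merging $L_n$ into the leading Gaussian and applying Slutsky's theorem will then deliver the result, with $\muMod$ and $\sigMod$ identified from $\E \widehat{Q}$ and $\V(Q+L_n)$ respectively.

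For $Q$ itself, the degree-based null model of Definition~\ref{Def:DegreebasedModel} makes the centered edges $A_{ij}-\E A_{ij}$ independent across pairs $i<j$, so $Q$ is a weighted sum of independent mean-zero random variables with deterministic weights $\ingroup$. I would invoke the Lindeberg–Feller CLT with variance $\sigma_n^2 = \sum_{i<j}\V(A_{ij})\,\ingroup$. The hypothesis that the number of groups $K_n$ grows strictly more slowly than $n$ guarantees that the number of within-group pairs is of order $n^2/K_n \to \infty$; this forces $\sigma_n^2$ to dominate the maximal per-edge variance, so that the Lindeberg condition reduces to a uniform-integrability statement supplied by the null model's moment assumptions on edge weights.

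The main obstacle is $R_n$, because $d_i d_j/\sum_l d_l$ is a nonlinear functional of the \emph{full} adjacency matrix and so $R_n$ is not independent of $Q$. My approach is to concentrate each $d_i$ around $\E d_i$ using a Bernstein-type bound (valid since $d_i$ is a sum of independent $A_{ij}$'s under the null), and then Taylor-expand $d_i d_j/\sum_l d_l$ around $(\E d_i)(\E d_j)/\sum_l \E d_l$. The first-order term $L_n$ is again a linear combination of the independent variables $A_{ij}-\E A_{ij}$ and can be absorbed into a refined Gaussian by adjusting $\muMod$ and $\sigMod$; the remaining quadratic and higher terms are bounded in probability by products of concentration slacks and shown to be $o_P(\sigMod)$. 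The technically delicate step is obtaining uniform control on $\max_i|d_i/\E d_i - 1|$ across the entire sparsity regime that Definition~\ref{Def:DegreebasedModel} permits, so that this Taylor remainder is genuinely negligible on the scale of $\sigMod$ and so that the variance inflation from $L_n$ does not destroy the Lindeberg condition after renormalization. Once that uniform concentration is in hand, the rest is essentially bookkeeping to identify $\muMod$ and $\sigMod$ explicitly in terms of the model parameters and the group-assignment indicators $\ingroup$.
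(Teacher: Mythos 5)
Your route is essentially the paper's: the paper also linearizes the plug-in term $d_i d_j/\sum_l d_l$ around its population value (via $\hat{\pi}_i=d_i/\sqrt{\sum_l d_l}$, Chebyshev concentration of the degrees, and a Taylor expansion of $1/\sqrt{\sum_l d_l}$), collects the oracle part of Eq.~\eqref{Def:modularity} together with the linear correction into a single weighted sum of the independent centered edges, namely $\sum_{j}\sum_{i<j}\bigl[\ingroup+\beta_i+\beta_j\bigr](A_{ij}-\E A_{ij})$, applies Lindeberg--Feller to that sum, shows the deterministic first-order mismatch is $\muMod$ and the remaining error is $o_P(\sigMod)$, and finishes with Slutsky. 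Your $Q+L_n$ is exactly this sum and your $\V(Q+L_n)$ is the paper's $\sigMod^2$ in Eq.~\eqref{sigMod}, so the overall architecture matches.

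The one place your plan points in the wrong direction is the role of $K=o(n)$ and the variance after merging $L_n$. You argue that the $\asymp n^2/K$ within-group pairs make the oracle variance $\sum_{i<j}\V(A_{ij})\ingroup$ dominate, and you worry only that ``variance inflation from $L_n$'' might spoil the Lindeberg condition. But $L_n$ is strongly \emph{negatively} correlated with $Q$: the plug-in null expectation is built from the very degrees whose fluctuations drive $Q$, so on within-group pairs the merged coefficient $\ingroup+\beta_i+\beta_j$ can nearly cancel (for instance, when almost all expected degree is within-group one has $\beta_i\approx -1/2$ and the coefficient is of order $1/n$). The danger is deflation, not inflation, and the substantive use of $K=o(n)$ in the paper is precisely to lower-bound the merged variance $\sigMod^2=\sum_{j}\sum_{i<j}(\ingroup+\beta_i+\beta_j)^2\V A_{ij}$: it is rewritten as a quadratic form in the group totals $a_k=\sum_{i:\,g(i)=k}\pi_i$ and shown to be of the order of $\sum_k a_k^2$, which diverges because $\sum_k a_k^2\geq\bigl(\sum_i\pi_i\bigr)^2/K$ and $K=o(n)$. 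Your Lyapunov/Lindeberg verification must therefore be run on the merged coefficients, not the oracle weights $\ingroup$; without that step the normalization by $\sigMod$ is not justified. A smaller remark: uniform control of $\max_i\lvert d_i/\E d_i-1\rvert$ is not actually needed --- the paper controls the aggregated Taylor remainders with Chebyshev bounds and the minimum expected degree, which is weaker and suffices at the scale of $\sigMod$.
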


\begin{proof}
	Proofs of all results are given in the Appendices.
\end{proof}

Thus, when appropriately shifted and scaled, modularity converges in distribution to a standard Normal random variable. In the sequel we explain this result and give explicit formulations for $\muMod$ and $\sigMod^2$ (Eqs.\ \eqref{muMod} and \eqref{sigMod} below).

\section{The network model underlying modularity} \label{DegreeBasedNull}

To understand Theorem \ref{Thm:DensityM}, we must establish a technical foundation for modularity in the presence of covariates. Different models for the network edges $A_{ij}$ will imply different estimators for $Q$ in Eq.\ \eqref{Def:modularity}. Estimating $Q$ using $\smash{\widehat{Q}}$ in Eq.\ \eqref{Def:modularityempirical}, we indirectly assume a model for the absence of community structure, where nodes connect independently based on the product of their individual propensities to form connections \cite{Chung2002, Wolfe2012, Olhede2012}.

\begin{definition}[The network model underlying modularity]
\label{Def:DegreebasedModel}
Consider an undirected, random graph on $n$ nodes without self-loops. We model its (possibly weighted) edges $A_{ij} \geq 0$ as independent random variables with expectations given by the product of node-specific parameters $\pi_1, \pi_2, \ldots, \pi_n >0$:
\begin{align*}
	\E A_{ij}= \pi_i \pi_j, \quad 1 \leq i< j\leq n.
\end{align*}
Furthermore, considering a sequence of such networks as $n$ grows, we assume they are well behaved asymptotically:
\begin{enumerate}
	\item No single node dominates the network: $\max_i \pi_i/ \bar{\pi}$, with $\bar{\pi}= \frac{1}{n} \sum_{l=1}^n \pi_l$, is bounded asymptotically; \label{nonodeincontrol}
	\item The network is not too sparse: $\min_i \pi_i \cdot \sqrt{n}$ diverges as $n$ grows;\label{sparse}
	\item The expectation of each edge $\E A_{ij}$ does not diverge too quickly as $n$ grows: $\max_i \pi_i/\sqrt{n}$ goes to 0; \label{notcomplete}
	\item The variance of each edge does not vary too much from its expectation: $\V A_{ij}/ \E A_{ij}$ is bounded from above and away from 0 asymptotically; and%
 \label{over-dispersed}
		\item The skewness of each edge $A_{ij}$ is controlled: the third central moment $ \smash{\E \left[\left(A_{ij} - \E A_{ij}\right)^3\right]}$ divided by the variance $ \V A_{ij}$ is bounded asymptotically.  \label{skewed}
\end{enumerate}
\end{definition}

We make no further assumptions on the distribution of $A_{ij}$, and so our results apply in many settings, including weighted networks and those with multiple edges. Assumptions \ref{nonodeincontrol}--\ref{notcomplete} are structural: the first excludes star-like networks; the second ensures that the network is not too sparse; and the third controls the growth of $\E A_{ij}$ with $n$ in the weighted or multi-edge setting. Assumptions \ref{over-dispersed} and \ref{skewed} are technical; they exclude extreme behavior of the edge variables. For instance, both are fulfilled whenever $A_{ij} \sim \operatorname{Bernoulli}\left(\pi_i \pi_j\right)$ or $A_{ij} \sim \operatorname{Poisson}\left(\pi_i \pi_j\right)$.

Each parameter $\pi_i$ describes the relative popularity of node $i$. Thus, to fit the degree-based model of Definition \ref{Def:DegreebasedModel} to a network, we estimate the parameters $\pi_i$ using the node's degrees $d_i$ as follows \cite{Chung2002, Wolfe2012, Olhede2012}:
\begin{align}
\hat{\pi}_i = \frac{d_i}{\sqrt{\sum_{l=1}^n d_l}}, \quad 1 \leq i \leq n.
\label{pihat}
\end{align}

The estimator $\hat{\pi}_i$ is both more natural and more computationally efficient than the corresponding maximum-likelihood estimator for $\pi_i$, which follows from the theory of generalized linear models and cannot be written explicitly in closed form.  In many settings the difference between these estimators is provably small \cite{Wolfe2012}, and so properties of maximum likelihood estimation can also be expected to hold for Eq.\ \eqref{pihat}.

Most importantly, we show that any finite collection of estimators defined by Eq.\ \eqref{pihat} tends toward a multivariate Normal distribution when $n$ is large and Definition \ref{Def:DegreebasedModel} is in force.  This generalizes a univariate result in \cite{Olhede2012} which assumes $\operatorname{Bernoulli}\left(\pi_i \pi_j\right)$ edges and a power law degree distribution.

\begin{Theorem} [Multivariate central limit theorem for Eq.\ \eqref{pihat}] \label{GeneralCLTPoisson}
Assume the model of Definition \ref{Def:DegreebasedModel} and any finite set of estimators from Eq.\ \eqref{pihat}.
Relabeling the indices of these estimators from 1 to $r$ without loss of generality, we have that as $n \to \infty$,
	 \begin{align*}
	\sqrt{\sum_{l=1}^n \E d_l} \left(\frac{\hat{\pi}_1-\pi_1}{\sqrt{\V d_1}}, \ldots,  \frac{\hat{\pi}_r-\pi_r}{\sqrt{\V d_r}}\right)
	 		 \stackrel{d}{\rightarrow}\No(0, I_{r}).
	 \end{align*}
Furthermore, $\sqrt{n \V d_i/\sum_{l=1}^n \E d_l}$ is bounded asymptotically, and can be consistently estimated if $A_{ij} \sim \operatorname{Bernoulli}\left(\pi_i \pi_j\right)$ or $\operatorname{Poisson}\left(\pi_i \pi_j\right)$ by substituting $\hat{\pi}$ for $\pi$ in $\V d_i$ $\!$and $\E d_i$.
\end{Theorem}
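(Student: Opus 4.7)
The plan is to first establish a joint central limit theorem for the degree vector $(d_1,\ldots,d_r)$ together with the total edge weight $T=\sum_{l=1}^n d_l$, and then pass to $\hat{\pi}_i = d_i/\sqrt T$ via a Taylor expansion of the random denominator. The whole argument rests on the fact that each $d_i$ and $T$ is a linear combination of the independent edge variables $\{A_{ij}\}_{i<j}$, so any linear combination of the coordinates of $(d_1,\ldots,d_r,T)$ is itself a sum of independent centred random variables to which Lyapunov's CLT applies.

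First I would compute the required moments directly: $\E d_i = \pi_i\sum_{l\neq i}\pi_l$, $\V d_i = \sum_{j\neq i}\V A_{ij}$, $\cov(d_i,d_j) = \V A_{ij}$ for $i\neq j$, $\cov(d_i,T) = 2\V d_i$, and $\V T = 2\sum_l\V d_l$. Assumptions \ref{nonodeincontrol} and \ref{over-dispersed} yield the order estimates $\E d_i \asymp \V d_i \asymp n\pi_i\bar\pi$ and $\E T \asymp \V T \asymp n^2\bar\pi^2$. Setting $X_i := (d_i-\E d_i)/\sqrt{\V d_i}$ and $Y := (T-\E T)/\sqrt{\V T}$, all pairwise correlations are $O(1/\sqrt n)$, so the covariance matrix of $(X_1,\ldots,X_r,Y)$ tends to $I_{r+1}$. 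Combining the Cramér--Wold device with Lyapunov's CLT---whose third-moment side-condition $\sum_{j\neq i}\E|A_{ij}-\E A_{ij}|^3/(\V d_i)^{3/2}\to 0$ follows from the skewness bound of assumption \ref{skewed} together with $\max_j \V A_{ij}/\V d_i\to 0$ (a consequence of assumptions \ref{nonodeincontrol}--\ref{notcomplete})---gives $(X_1,\ldots,X_r,Y)\stackrel{d}{\rightarrow}\No(0,I_{r+1})$.

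Next, I would Taylor-expand $1/\sqrt T$ about $1/\sqrt{\E T}$: writing $T = \E T\,(1 + Y\sqrt{\V T}/\E T)$ yields
\begin{align*}
\sqrt{\textstyle\sum_l \E d_l}\,\frac{\hat\pi_i-\pi_i}{\sqrt{\V d_i}}
= X_i + \frac{\E d_i - \pi_i\sqrt{\E T}}{\sqrt{\V d_i}} - \frac{\E d_i\sqrt{\V T}}{2\,\E T\sqrt{\V d_i}}\,Y + o_p(1).
\end{align*}
The deterministic bias equals $\pi_i\bigl[\sum_{l\neq i}\pi_l - \sqrt{(\sum_l\pi_l)^2-\sum_l\pi_l^2}\bigr] = O(\pi_i\bar\pi)$ by a one-term expansion of $\sqrt{1-x}$, so dividing by $\sqrt{\V d_i}\asymp\sqrt{n\pi_i\bar\pi}$ yields $O(\sqrt{\pi_i\bar\pi/n})$, which vanishes under assumption \ref{notcomplete}. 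The coefficient of $Y$ is $\asymp\sqrt{\pi_i/(n\bar\pi)}$, which vanishes under assumption \ref{nonodeincontrol}. Combined with the joint convergence from the previous step, this produces the stated multivariate CLT. Boundedness of $\sqrt{n\V d_i/\sum_l\E d_l}\asymp\sqrt{\pi_i/\bar\pi}$ is immediate from assumption \ref{nonodeincontrol}, and in the Bernoulli or Poisson cases the plug-in consistency $\widehat{\V d_i}/\V d_i\stackrel{p}{\rightarrow}1$ follows by continuous mapping, using $\hat\pi_i\stackrel{p}{\rightarrow}\pi_i$ (a corollary of the displayed CLT) together with the closed form $\sum_{j\neq i}\hat\pi_i\hat\pi_j = d_i(T-d_i)/T$, which concentrates around $\E d_i\asymp\V d_i$.

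The hard part is the linearisation step: the deterministic bias $\E d_i - \pi_i\sqrt{\E T}$ and the $Y$-contribution are each \emph{a priori} of the same order as the leading Gaussian fluctuation $\sqrt{\V d_i}$, and only the combined strength of assumptions \ref{nonodeincontrol}--\ref{notcomplete} forces both to vanish relative to it.
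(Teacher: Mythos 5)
Your proposal is correct in substance, but it organizes the argument differently from the paper, and the comparison is instructive. The paper first proves a univariate CLT for $\hat{\pi}_i$ by writing $(\hat{\pi}_i-\pi_i)\big/\sqrt{\V d_i/\E\|\bm{d}\|_1}=[T_1+T_2]\,T_3$, controlling the random denominator through the lemma $\sqrt{\|\bm{d}\|_1}-\sqrt{\E\|\bm{d}\|_1}=\mathcal{O}_P(1)$ and the ratio $T_3\stackrel{P}{\rightarrow}1$; in the multivariate step it confronts the dependence among $d_1,\ldots,d_r$ (they share the edges $A_{ij}$ with $i,j\le r$) by explicitly splitting those edges off into a term $\bm{m_{13}}\stackrel{P}{\rightarrow}\bm{0}$, so that Cram\'er--Wold is applied to a vector with \emph{literally independent} components, plus a diagonal rescaling converging to $I_r$. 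You instead keep the dependence and absorb it into the covariance computation: you apply Lyapunov directly to an arbitrary linear combination of $(X_1,\ldots,X_r,Y)$, which is again a weighted sum over the independent edges, and identify the limit variance from the fact that all cross-correlations (including those with $Y=(T-\E T)/\sqrt{\V T}$) vanish; you then handle the denominator $\sqrt{T}$ by a delta-method expansion, showing the deterministic bias and the coefficient of $Y$ are both $o(1)$ after standardization. Your route buys a cleaner one-shot treatment of the denominator (the paper's separate $\mathcal{O}_P(1)$-lemma and $T_3$ argument are replaced by one expansion, and in fact joint normality with $Y$ is not even needed since its coefficient vanishes and $Y$ is tight), at the cost of having to verify the Lyapunov condition for \emph{general} linear combinations of weakly dependent coordinates rather than for independent ones --- you only display the per-coordinate condition, though the general case follows routinely since the edge weights are uniformly $o(1)$ while the combined variance stays bounded away from zero. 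The one place where your sketch is substantially thinner than the paper is the final consistency claim: the paper devotes a separate theorem (and case analysis for Bernoulli versus Poisson) to ratio-consistency of the plug-in estimator of $\V d_i/\E\|\bm{d}\|_1$, including the Bernoulli correction terms $-\pi_i^2\|\bm{\pi}\|_2^2+\pi_i^4$ and the control of $\|\bm{d}\|_2^2/\|\bm{d}\|_1^2$, whereas you dispose of it in one sentence; your outline (concentration of $d_i$ and $T$, plus $d_i/T=\mathcal{O}_P(1/n)$) is the right mechanism but would need to be carried out, particularly for the Bernoulli case.
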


From Definition \ref{Def:DegreebasedModel} and Eq.\ \eqref{pihat}, it is natural to define
\begin{align*}
 \pij = \hat{\pi}_i \hat{\pi}_j= \frac{d_i d_j}{\sum_{l=1}^n d_l}, \quad 1 \leq i< j\leq n.
\end{align*}
Substituting $\pij$ for $\E A_{ij}$ in Eq.\ \eqref{Def:modularity}, we immediately recognize modularity $\smash{\widehat{Q}}$ as defined in Eq.\ \eqref{Def:modularityempirical}.  Thus, modularity implicitly assumes the degree-based model of Definition \ref{Def:DegreebasedModel}.

Moreover, $\smash{\pij} - \E A_{ij}$ converges in probability to zero under the model of Definition \ref{Def:DegreebasedModel} (see Appendices).  As a consequence of Theorem \ref{GeneralCLTPoisson}, we then obtain a central limit theorem for $\smash{\pij}$.

\begin{corollary}
As $n \to \infty$ under the model of Definition \ref{Def:DegreebasedModel},
\begin{align*}								
		\frac{\pij-\E A_{ij}}{\sqrt{\left(\pi_j^2 \V d_i + \pi_i^2 \V d_j\right)/ \sum_{l=1}^n \E d_l}} \stackrel{d}{\rightarrow} \No(0,1).
\end{align*}
Furthermore, $\smash{ \sqrt{ \left[n / \E A_{ij}\right] \cdot\left(\pi_j^2 \V d_i + \pi_i^2 \V d_j\right)/ \sum_{l=1}^n \E d_l} }$ is bounded asymptotically, and can be consistently estimated if $A_{ij} \sim \operatorname{Bernoulli}\left(\pi_i \pi_j\right)$ or $A_{ij} \sim \operatorname{Poisson}\left(\pi_i \pi_j\right)$ by substituting $\hat{\pi}$ for~$\pi$.			
\end{corollary}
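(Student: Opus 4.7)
The plan is to derive this corollary from Theorem~\ref{GeneralCLTPoisson} (applied with $r=2$ to the pair $(\hat\pi_i, \hat\pi_j)$) via a bivariate delta-method expansion of the product $f(x,y)=xy$. Writing
\begin{equation*}
\pij - \E A_{ij} \;=\; \pi_j(\hat\pi_i - \pi_i) + \pi_i(\hat\pi_j - \pi_j) + (\hat\pi_i - \pi_i)(\hat\pi_j - \pi_j),
\end{equation*}
I would show that the two linear terms, after dividing by the stated normalization $\tau_n := \sqrt{(\pi_j^2 \V d_i + \pi_i^2 \V d_j)/\sum_{l=1}^n \E d_l}$, converge in distribution to $\No(0,1)$, while the bilinear remainder is $o_p(\tau_n)$ and hence negligible by Slutsky.

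Setting $\sigma_k^{(n)} := \sqrt{\V d_k / \sum_{l=1}^n \E d_l}$ and $Z_k^{(n)} := (\hat\pi_k - \pi_k)/\sigma_k^{(n)}$, Theorem~\ref{GeneralCLTPoisson} yields $(Z_i^{(n)}, Z_j^{(n)}) \stackrel{d}{\rightarrow} \No(0, I_2)$. The linear contribution divided by $\tau_n$ then takes the form $a_n Z_i^{(n)} + b_n Z_j^{(n)}$ with deterministic coefficients $a_n := \pi_j \sigma_i^{(n)}/\tau_n$, $b_n := \pi_i \sigma_j^{(n)}/\tau_n$ satisfying $a_n^2 + b_n^2 = 1$. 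A direct characteristic-function calculation (the coefficients are bounded in $n$) then produces convergence of this linear combination to $\No(0,1)$.

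The main technical obstacle is controlling the bilinear remainder $(\hat\pi_i - \pi_i)(\hat\pi_j - \pi_j) = \sigma_i^{(n)} \sigma_j^{(n)} Z_i^{(n)} Z_j^{(n)} = O_p(\sigma_i^{(n)} \sigma_j^{(n)})$ relative to $\tau_n$. By AM--GM, $\tau_n \geq \sqrt{2\pi_i \pi_j \sigma_i^{(n)} \sigma_j^{(n)}}$, so the ratio is of order $(\sigma_i^{(n)} \sigma_j^{(n)}/(\pi_i \pi_j))^{1/2}$. Using the order estimates $\V d_k \asymp n \pi_k \bar\pi$ (from assumption~\ref{over-dispersed} together with $\E d_k = \pi_k \sum_{l\neq k}\pi_l$) and $\sum_{l=1}^n \E d_l \asymp n^2 \bar\pi^2$ (from assumption~\ref{nonodeincontrol}, which bounds $\sum_l \pi_l^2$ by $O(n\bar\pi^2)$), this ratio reduces to order $(n \bar\pi \sqrt{\pi_i \pi_j})^{-1/2}$, which vanishes by assumptions~\ref{nonodeincontrol} and~\ref{sparse}.

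Finally, substituting the same order estimates yields $[n/(\pi_i \pi_j)] \cdot \pi_j^2 \V d_i/\sum_{l=1}^n \E d_l \asymp \pi_j/\bar\pi$, bounded by assumption~\ref{nonodeincontrol} (and symmetrically for the $j$ term). Consistent estimation in the Bernoulli/Poisson case follows because $\V d_k$ and $\E d_k$ are explicit polynomial functions of $\pi_1, \ldots, \pi_n$ (e.g., for Poisson, $\V d_k = \pi_k \sum_{l\neq k}\pi_l$); substituting $\hat\pi_k$ and using the relative-error bound $\hat\pi_k/\pi_k = 1 + o_p(1)$, which follows from Theorem~\ref{GeneralCLTPoisson} under assumptions~\ref{nonodeincontrol}--\ref{sparse}, gives consistency by Slutsky's theorem.
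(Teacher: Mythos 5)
Your main argument is essentially the paper's own: the same expansion $\pij-\E A_{ij}=\pi_j(\hat\pi_i-\pi_i)+\pi_i(\hat\pi_j-\pi_j)+(\hat\pi_i-\pi_i)(\hat\pi_j-\pi_j)$, the linear part handled by Theorem \ref{GeneralCLTPoisson} with $r=2$ together with a Cram\'er--Wold/characteristic-function step (your observation $a_n^2+b_n^2=1$ is exactly what makes that step work), and the cross term shown negligible, followed by the same order-of-magnitude computation for the boundedness of the scaled normalization. The one place you deviate is the remainder control: the paper bounds the ratio of the cross term to the linear term using the rate $\hat\pi_i-\pi_i=\mathcal{O}_P\left(\pi_i/\sqrt{\E d_i}\right)$ (its Lemma \ref{15122015}), obtaining $\mathcal{O}_P\bigl(1/(\sqrt{\E d_i}+\sqrt{\E d_j})\bigr)$, whereas you compare the cross term directly to the deterministic normalizer via AM--GM and the order estimates $\V d_k \asymp n\pi_k\bar\pi$, $\E\left\|\bm{d}\right\|_1\asymp n^2\bar\pi^2$; both routes are valid and yield the same conclusion (your version avoids dividing by the random increments, which is a mild advantage).

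The genuine gap is the final consistency clause. Your justification --- $\V d_k$ and $\E\left\|\bm{d}\right\|_1$ are explicit polynomials in $\bm\pi$, each $\hat\pi_k/\pi_k=1+o_P(1)$, hence Slutsky --- does not suffice, because the plug-in quantities involve sums over a growing number of coordinates: pointwise consistency of each $\hat\pi_k$, without uniformity in $k$, does not control the relative error of aggregates such as $\sum_l\hat\pi_l$ or $\sum_l\hat\pi_l^2$ (for Bernoulli edges the plug-in variance involves $\left\|\hat{\bm\pi}\right\|_2^2=\left\|\bm{d}\right\|_2^2/\left\|\bm{d}\right\|_1$). What is actually needed, and what the paper supplies in its dedicated plug-in theorem (Theorem \ref{ThmConsistency} and the subsequent Bernoulli/Poisson verifications), is concentration of these aggregate degree statistics: e.g.\ $\left\|\hat{\bm\pi}\right\|_1=\sqrt{\left\|\bm{d}\right\|_1}$ with $\left\|\bm{d}\right\|_1/\E\left\|\bm{d}\right\|_1\stackrel{P}{\to}1$ by Chebyshev, $d_i/\left\|\bm{d}\right\|_1=\mathcal{O}_P(1/n)$, and $\left\|\bm{d}\right\|_2^2/\left\|\bm{d}\right\|_1^2=\mathcal{O}_P(1/n)$, the last requiring its own moment computation ($\E\left\|\bm{d}\right\|_2^2=\left\|\bm\pi\right\|_1^2\left\|\bm\pi\right\|_2^2(1+o(1))$). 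Your sketch should either invoke such a concentration argument for the plug-in aggregates or reproduce it; as written, that step would not go through.
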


This result leads to the first of two key insights as to why modularity, when appropriately shifted and scaled, behaves like a $\No(0,1)$ random variable.  Recall that $\smash{\widehat{Q}}$ (Eq.\ \eqref{Def:modularityempirical}) is an estimator for its population counterpart $Q$ (Eq.\ \eqref{Def:modularity}), in which $\smash{\pij}$ estimates $\E A_{ij}$. Comparing Eqs.\ \eqref{Def:modularityempirical} and \eqref{Def:modularity}, and approximating $\smash{\pij}$ by $\E d_i d_j / \sum_{l=1}^n \E d_l$, we obtain:
\begin{align*}
\E ( \widehat{Q} - Q ) \approx \sum_{j=1}^n  \sum_{i<j} \left(\E A_{ij} - \frac{\E d_i d_j}{ \sum_{l=1}^n \E d_l} \right) \ingroup.
\end{align*}
Under the model of Definition \ref{Def:DegreebasedModel}, this difference cancels to first order (see Appendices), yielding an approximate bias term of
\begin{align}
\label{muMod}	\muMod &= \sum_{j=1}^n \sum_{i<j}  \frac{\E A_{ij} \left(\E d_i + \E d_j - \sum_{l=1}^n \pi_l^2 \right)}{\sum_{l=1}^n \E d_l} \ingroup.
\end{align}
This is precisely the shift term appearing in Theorem \ref{Thm:DensityM}.

\begin{figure}[t!]
\centering
\includegraphics[scale=0.21,trim = 2.3cm 2.5cm 2.6cm 3.8cm, clip]{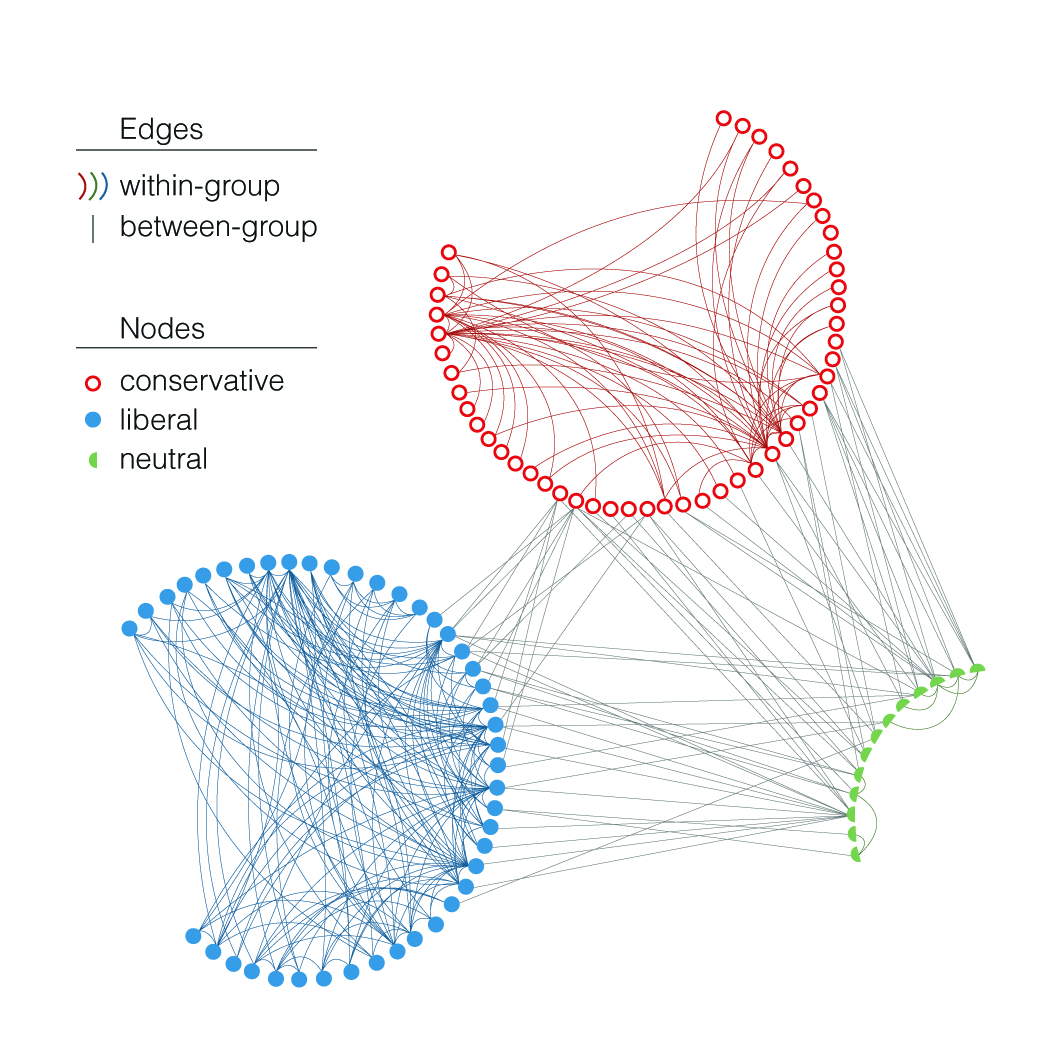}
\caption{\label{FigWithinBetweenPolbooks}Within- and between-group edges in a network of political books frequently purchased together, where groups are defined by political alignment \cite{newman2006modularity}.  Note that only within-group edges appear in $Q$ (Eq.\ \eqref{Def:modularity}); by contrast, both types of edges contribute to modularity $\smash{\widehat{Q}}$ (Eq.\ \eqref{Def:modularityempirical}).\vspace{-0.75\baselineskip}}
\end{figure}

\section{Modularity reflects within- and between-group edges}
\label{Significance}

Figure \ref{FigWithinBetweenPolbooks} illustrates the second main insight into the limiting behavior of modularity: its variability reduces asymptotically to that of a centered sum of within- and between-group edges.

More specifically, every network degree $d_i = \smash{\sum_{j\neq i} A_{ij}}$ decomposes into within- and between-group components:
\begin{align*}
    &d_i = d_i^w + d_i^b;
\\ 	d_i^w = \sum_{j \neq i} A_{ij} & \ingroup, \,\, \quad
 	d_i^b = \sum_{j \neq i} A_{ij} \NOTingroup.
\end{align*}

This decomposition is surprisingly powerful, in part because the model of Definition \ref{Def:DegreebasedModel} asserts that $d_i^w$ and $d_i^b$ are statistically independent for any fixed group assignment $g(1), g(2), \ldots, g(n)$. After separating the systematic bias term $b$ in modularity from its random variation, we obtain the following decomposition.

\begin{Theorem}[Bias--variance decomposition for modularity]
\label{modaswithinandbetween}
Under the null model of Definition \ref{Def:DegreebasedModel} and for a fixed (non-random) group assignment $g(1), g(2), $ $ \ldots, g(n)$, it holds that
\begin{equation*}
\widehat{Q} - \muMod = \sum_{i=1}^n \alpha_i \left[d_i^w - \E d_i^w\right]  + \sum_{i=1}^n \beta_i \left[d_i^b - \E d_i^b\right] + \epsilon,
\end{equation*}
where $\epsilon$ is a random error term, $\alpha_i = 1/2 + \beta_i$, and
\begin{equation*}
\beta_i = \left[ \frac{1}{2} \frac{\sum_{l=1}^n \E d_l^w}{\sum_{l=1}^n \E d_l} - \frac{ \E d_i^w}{\E d_i}\right], \quad 1 \leq i \leq n.
\end{equation*}
\end{Theorem}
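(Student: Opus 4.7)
The plan is to realize $\widehat{Q}$ as a smooth function of the $2n$ variables $(d_1^w,\ldots,d_n^w,d_1^b,\ldots,d_n^b)$ and apply a multivariate Taylor expansion about the vector of means. With $d_i = d_i^w + d_i^b$ and $M = \sum_l d_l$, one has the decomposition
\begin{equation*}
\widehat{Q} = \tfrac{1}{2}\sum_i d_i^w - f, \qquad f := \sum_{i<j}\frac{d_i d_j}{M}\ingroup.
\end{equation*}
The first piece is already linear in the $d_i^w$'s and contributes $\tfrac{1}{2}$ to every coefficient of $(d_i^w-\E d_i^w)$ and nothing to those of $(d_i^b-\E d_i^b)$. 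All the work therefore lies in the nonlinear term $-f$: matching its Taylor constant to $\muMod$ and matching its linear coefficients (together with the $\tfrac12$ above) to $\alpha_k$ and $\beta_k$.

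Because $f$ depends on $d_k^w$ and $d_k^b$ only through $d_k$, we have $\partial f/\partial d_k^w = \partial f/\partial d_k^b = \partial f/\partial d_k$, and the quotient rule gives
\begin{equation*}
\frac{\partial f}{\partial d_k} \;=\; \frac{1}{M}\sum_{j\neq k,\, g(j)=g(k)} d_j \;-\; \frac{1}{M^2}\sum_{i<j}d_id_j\ingroup.
\end{equation*}
Evaluating at the mean and substituting $\E A_{ij}=\pi_i\pi_j$, $\E d_l = \pi_l(\sum_k\pi_k - \pi_l)$, and $\E M = (\sum_k\pi_k)^2 - \sum_l\pi_l^2$, I would verify the algebraic identity
\begin{equation*}
-\frac{\partial f}{\partial d_k}\bigg|_{\text{mean}} \;=\; \frac{1}{2}\frac{\sum_l \E d_l^w}{\sum_l \E d_l} - \frac{\E d_k^w}{\E d_k} + r_k \;=\; \beta_k + r_k,
\end{equation*}
where $r_k$ collects the $\pi_l^2$ correction terms that arise when $\E d_l$ and $\E M$ are expanded in the $\pi_l$'s. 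The coefficient of $(d_k^w-\E d_k^w)$ is therefore $\tfrac12 + \beta_k = \alpha_k$ and that of $(d_k^b - \E d_k^b)$ is $\beta_k$, each modulo the residual $r_k$. In the same spirit, the Taylor constant $\tfrac12\sum_i \E d_i^w - \sum_{i<j}\E d_i \E d_j \ingroup/\E M$ telescopes, under the same substitutions, into $\muMod$ as given in Eq.\ \eqref{muMod} up to a lower-order discrepancy.

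The main obstacle will be the error $\epsilon$, which must absorb (i) the second- and higher-order Taylor terms of $-f$ -- in particular products of the form $(d_k-\E d_k)(M-\E M)/(\E M)^2$ summed over the double index -- together with (ii) the collected residuals $r_k$ and the bias discrepancy above. Here the structural assumptions of Definition~\ref{Def:DegreebasedModel} do the heavy lifting: assumption~\ref{sparse} forces $\E d_i\to\infty$, while assumption~\ref{over-dispersed} yields $\V d_i \asymp \E d_i$, so that $d_i-\E d_i = O_p(\sqrt{\E d_i})$ and $M-\E M = O_p(\sqrt{\E M})$. The core technical step is to propagate these bounds carefully through the double sum defining $f$ and to show that every contribution to $\epsilon$ is of smaller stochastic order than the linear pieces $\sum_i\alpha_i(d_i^w-\E d_i^w)$ and $\sum_i\beta_i(d_i^b-\E d_i^b)$; this is delicate because individual $\beta_i$'s can be small, so the bounds must be uniform in $i$ and must exploit the group structure encoded in $\ingroup$.
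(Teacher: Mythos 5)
Your strategy---write $\widehat{Q}=\tfrac{1}{2}\sum_i d_i^w - f$ with $f=\sum_{i<j}d_id_j\ingroup/\sum_l d_l$, linearize $f$ about the expected degrees, and read off $\muMod$ from the constant and $\alpha_i,\beta_i$ from the first-order coefficients---is in substance the same expansion the paper carries out; the paper merely routes it through $\hat\pi_i=d_i/\sqrt{\|\bm{d}\|_1}$ and a Taylor expansion of $(\|\bm{d}\|_1/\E\|\bm{d}\|_1)^{-1/2}$, which is equivalent bookkeeping. Your observation that $-\partial f/\partial d_k$ at the mean equals $\beta_k$ only up to a residual $r_k$ also mirrors the paper's structure: its decomposition lemma first produces $\beta_j$ written in terms of group sums of the $\pi_l$, and a separate corollary shows this equals the stated ratio form $\frac{1}{2}\sum_l\E d_l^w/\sum_l\E d_l-\E d_j^w/\E d_j$ up to a relative $\mathcal{O}(1/n)$ error that is then absorbed into $\epsilon$.

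The genuine gap is that the decisive part of the argument---quantifying $\epsilon$---is announced but not executed, and without it the statement ``$\epsilon$ is a random error term'' is empty: what the paper actually proves, and what is needed downstream for Theorem 1, is an explicit bound $\epsilon=\mathcal{O}_P\left(\sum_{i<j}\pi_i\pi_j\ingroup\big/\left[\min(n,\|\bm{\pi}\|_1)\,\min_l\sqrt{\E d_l}\right]\right)$, later shown to be $o(\sigMod)$ because $\sigMod^2=\V X_n=\Theta(\|\bm{a}\|_2^2)$ with $a_k$ the total of the $\pi_i$ in group $k$. Producing such a bound is where essentially all the work lies, and your sketch leaves each of the required steps open: (i) the quadratic remainder is a sum over $\Theta(n^2)$ pairs, e.g.\ $\sum_{i<j}(d_i-\E d_i)(d_j-\E d_j)\ingroup/M$, whose mean is nonzero (of order $\sum_{i<j}\V A_{ij}\ingroup/\E\|\bm{d}\|_1$, since $\cov(d_i,d_j)=\V A_{ij}$) and whose fluctuations need per-pair bounds made uniform before summing---the paper does this via the lemma $\hat\pi_i-\pi_i=\mathcal{O}_P(\pi_i/\sqrt{\E d_i})$; (ii) the random denominators $M,M^2$ in a Lagrange-form remainder sit at intermediate points and require the ``Taylor expansion in probability'' justification the paper invokes with Chebyshev control; (iii) the residuals $r_k$ multiply centered degrees, so one must show $\sum_k r_k(d_k-\E d_k)$ fits the same error budget (the paper's corollary converting $\beta_j$ to the interpretable form does exactly this); and (iv) the discrepancy between your Taylor constant $\sum_{i<j}(\E A_{ij}-\E d_i\E d_j/\E\|\bm{d}\|_1)\ingroup$ and $\muMod$ must be checked against that budget as well (the paper's bias lemma). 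The paper devotes five explicit error terms, $\epsilon^{(1)}$ through $\epsilon^{(5)}$, to precisely these points; as it stands your proposal establishes the shape of the decomposition but not the theorem.
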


Theorem \ref{modaswithinandbetween} quantifies the random variability inherent in modularity under the model of Definition \ref{Def:DegreebasedModel}. It establishes that a main term contributing to the variability of $\smash{\widehat{Q}} - \muMod$ in this setting is a linear combination of centered within- and between-group degrees ($d_i^w,d_i^b$), which for each $i$ are statistically independent.  The weights $\alpha_i$ and $\beta_i$ associated with this linear combination are determined by the global proportion of expected within-group edges in the network, 	relative to the local proportion of expected within-group edges specific to node~$i$.

Combining these two insights, we first shift modularity $\widehat{Q}$ by its approximate bias $\muMod$ and then scale it by the standard deviation $\sigMod$ of $\sum_{i=1}^n \alpha_i \left[d_i^w - \E d_i^w\right] +  \sum_{i=1}^n \beta_i \left[d_i^b - \E d_i^b\right] $, with
\begin{equation}
\label{sigMod}
\sigMod^2 = \sum_{j=1}^n \sum_{i < j} \left[  \ingroup  +  \beta_i + \beta_j \right]^2 \V A_{ij}.
\end{equation}
Recalling Theorem \ref{modaswithinandbetween}, we then know that we are left with a linear combination of centered within- and between-group degrees that are now also scaled by $s$. This leads directly to a central limit theorem for modularity $\smash{\widehat{Q}}$ as stated in Theorem~\ref{Thm:DensityM}:
\begin{equation*}
	\frac{\widehat{Q}- \muMod}{\sigMod} \stackrel{d}{\rightarrow} \No(0,1).
\end{equation*}

\section{Applying the limit theorem to benchmark examples}

\begin{table*}[t!]
	\vspace{0.5\baselineskip}%
\hskip-2.25cm
{\footnotesize
		\begin{tabular}{l l rrr rrrr r r  rr}
			Dataset (no.\ nodes) & Covariate (no.\ groups) &  \multicolumn{3}{c}{Degree percentiles}
			& \multicolumn{4}{c}{Simulated under the null}			
			& \multicolumn{2}{c}{Data as observed}
\\		& & \multicolumn{1}{c}{$25 \%$} & \multicolumn{1}{c}{$50 \%$} & \multicolumn{1}{c}{$75 \%$}
			&\multicolumn{2}{c}{$(\widehat{Q} - \hat{\muMod} ) / \hat{\sigMod}$}
			& \multicolumn{2}{c}{$p$-value}
			& $(\widehat{Q} - \hat{\muMod} ) / \hat{\sigMod}$ & $p$-value
\\		& &&&
		  &\multicolumn{1}{r}{mean}& std. &  \multicolumn{1}{r}{mean}& std.
			&&
\\[0.15cm]		
			Books ($105$) \cite{newman2006modularity} & Political alignment (3)
			& $5$	& $6$ & $9$ &$0.02$ &$1.01$ &$0.51$& $0.29$ & $21$  & $< 10^{-6}$  \\
			Jazz bands ($198$) \cite{gleiser2003community} &Recording location (17)
			& $16$ &	$25$ &	$39$
			& $0.01$ &$1.02$ &$0.51$& $0.29$ & $29$ & $< 10^{-6}$ \\
			Weblogs ($1224$) \cite{adamic2005political} &Political alignment (2)
			& $3$ &	$13$ &	$36$
			& $0.01$&  $1.04$& $0.50$ &$0.30$ & $118 $&$< 10^{-6}$ \\
			Co-authors ($36297$) \cite{newman2001structure}& Subject category (7)
			& $2$ & $5$ & $10$
			& $0.00$ & $1.00$& $0.50$ & $0.29$ & $472$& $< 10^{-6}$
\end{tabular}}
		\caption{\label{Tab: ModularityBinary} Analysis of four benchmark network datasets, using modularity derived from covariate-based community assignments.}
\end{table*}

Having established a central limit theorem for modularity in the presence of covariates, we now show how to apply this result in practice. To turn our theory into a methodology suitable for a specific network dataset, we first need to elicit a model for the data based on Definition \ref{Def:DegreebasedModel}. We then fit this model, leading ultimately to a $p$-value based on Theorem \ref{Thm:DensityM}.  We now illustrate the complete analysis procedure for four binary networks which, along with their covariates, frequently serve as benchmarks for community detection \cite{newman2006modularity, duch2005community}. Table~\ref{Tab: ModularityBinary} summarizes all data and results.
\begin{enumerate}
	\item First, we must further specify the null model of Definition~\ref{Def:DegreebasedModel}, so that the parameter $\sigMod^2$ in Eq.\ \eqref{sigMod} can be estimated. This can be done either by assuming sets of the variances $\V A_{ij}$ to be equal, or by assuming a distribution for the edges $A_{ij}$.  Since the benchmark networks we consider here are binary ($A_{ij}\in \left\{0, 1\right\}$), we model their edges as
\begin{align*}
A_{ij} \sim \operatorname{Bernoulli}(\pi_i \pi_j).
\end{align*}
	\item Second, we must assess whether the five asymptotic assumptions of Definition \ref{Def:DegreebasedModel} appear to hold for our data. Assumptions \ref{notcomplete}--\ref{skewed} are automatically satisfied for Bernoulli edges, and so we are left to assess Assumptions \ref{nonodeincontrol} ($\max_i \pi_i/\bar{\pi}$ bounded) and \ref{sparse} ($\min_i \pi_i \cdot \sqrt{n} $ growing). We do this by substituting $\hat{\pi}_i$ for $\pi_i$, noting that $\max_i \hat{\pi}_i / \bar{\hat{\pi}} = \max_i d_i / \bar{d}$ and $\min_i \hat{\pi}_i \cdot \sqrt{n}  = \min_i d_i / \smash{\sqrt{\bar{d}}}$. Replacing $\min_i d_i$, $\bar{d}$, and $\max_i d_i$ respectively by the first, second and third degree quartiles as shown in Table \ref{Tab: ModularityBinary}, we observe that for all four benchmark networks, these ratios are of order one. This indicates that these networks are neither too star-like nor too sparse for Theorem \ref{Thm:DensityM} to apply.
	\item Third, we estimate the parameters $\muMod$ and $\sigMod$ necessary to shift and scale $\smash{\widehat{Q}}$ in accordance with Theorem \ref{Thm:DensityM}. To obtain an estimator $\smash{\hat{\muMod}}$, we substitute $\hat{\pi}$ for $\pi$ in Eq.\ \eqref{muMod}. The estimator $\hat{\sigMod}$ depends on the assumption added in Step 1 above. Here, with $A_{ij} \sim \operatorname{Bernoulli}\left(\pi_i \pi_j\right)$, we have
\begin{equation*}
\V A_{ij} = \pi_i \pi_j \left(1 - \pi_i \pi_j\right).
\end{equation*}
Then, $\hat{\sigMod}$ follows directly by substituting $\hat{\pi}$ for $\pi$ in Eq.\ \eqref{sigMod}.
	\item Finally, we compute and interpret the resulting approximate $p$-value. We first define community assignments $g(1), g(2), \ldots, g(n)$ based on a covariate, and calculate $\smash{\widehat{Q}}$ as per Eq.\ \eqref{Def:modularityempirical}. We next estimate $\smash{(\widehat{Q}-\muMod) / \sigMod}$ using $\smash{\hat{\muMod}}$ and $\hat{\sigMod}$. Then, by Theorem \ref{Thm:DensityM}, we compute an approximate one-sided $p$-value as follows:
\begin{align}\label{Eq:pval}
\Pr \left(Z \geq \frac{\widehat{Q}- \hat{\muMod}}{\hat{\sigMod}}\right), \quad Z \sim \No(0,1).
\end{align}
A small $p$-value implies that the observed value of modularity (or any larger value) is unlikely under the null.
\end{enumerate}

Table \ref{Tab: ModularityBinary} shows the results of applying this procedure to four benchmark datasets: a network of books \cite{newman2006modularity} where books are connected if they have frequently been purchased together, categorized by political affiliation (Fig.\;\ref{FigWithinBetweenPolbooks}); a network of jazz bands \cite{gleiser2003community} where bands are connected if they have at least one band member in common, categorized by recording location; a network of political commentary websites (weblogs) \cite{adamic2005political} where weblogs are connected if they refer to each other, categorized by political affiliation; and a network of physicists \cite{newman2001structure} where physicists are connected if they have co-authored a manuscript, categorized by manuscript subject category.

The first conclusion of our benchmark analysis is as follows: when we fit the null model of Definition \ref{Def:DegreebasedModel} to each of these four networks, and then simulate from the fitted model (parametric bootstrap), each simulated network results in (via Eq.$\!$~\eqref{Eq:pval}) a $p$-value with empirical mean near $1/2$ and standard deviation near $1/\smash{\sqrt{12}}$. This empirical result aligns with Theorem \ref{Thm:DensityM}, which predicts the $p$-values to be uniformly distributed with exactly that mean and standard deviation in the limit.

Our second conclusion is that, when using the observed data rather than simulated data under the null, each of the covariates leads (again via Eq.$\!$~\eqref{Eq:pval}) to a very small $p$-value ($< 10^{-6}$; see Table \ref{Tab: ModularityBinary}). This suggests that the data as observed are extremely unlikely under the null.  Furthermore, since the null itself cannot explain any community structure, the conclusion we obtain agrees with the use of these covariates by other researchers as ground truth in community detection settings.

\section{Evaluating communities in a multi-edge email network}

We now illustrate how our methodology can identify covariates that reflect a network's community structure. This analysis goes beyond the four benchmark examples considered above, where we validated our methodology but did not reach any new data-analytic conclusions. Here we evaluate the effects of employee \emph{seniority}, \emph{gender}, and \emph{company department} on community structure in a multi-edge corporate email network (see Fig.\ \ref{Fig:EnronAdjMats}).  Table \ref{Tab: ModularityEnron} summarizes all results, showing that each of these covariates results in a small $p$-value, while covariates based on grouping the \emph{first-} or \emph{last-name initials} of the employees do not.  We will return to
this analysis in more detail below, after describing the data and eliciting a suitable model.

\begin{figure}[t!]
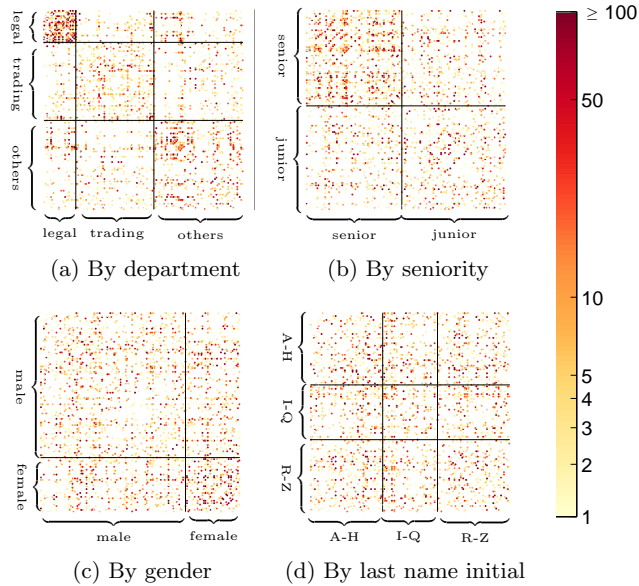

\centering
\hskip-1cm\scalebox{0.95}{\input{log_Enron_department_tex}}	
\scalebox{0.95}{\input{log_Enron_seniority_tex}} \\[-3.9cm]	 	
\scalebox{0.95}{\input{log_Enron_gender_tex}}	
\scalebox{0.95}{\input{log_Enron_initial_second_tex}}				
\caption{\label{Fig:EnronAdjMats} Multi-edges $A_{ij}$ in the Enron corporate email dataset (153 employees, 32261 pairwise email exchanges), grouped according to four different covariate-based community assignments. Shading indicates the number of emails exchanged.}
\end{figure}
\begin{table}[t!]
	\centering\vspace{0.5\baselineskip}%
		\begin{tabular}{l r r r}
			Covariate (no.\ groups) & \multicolumn{1}{c}{$\underline{\widehat{Q} - \hat{\muMod} }$} & \multicolumn{2}{c}{$p$-value} \\
			 & \multicolumn{1}{c}{$\hat{\sigMod}$}  & Eq.\ \eqref{Eq:pval} & Bootstrap \\[0.15cm] \hline \\
			Department (3) &  $6.17$ & $< 10^{-6}$ & $< 10^{-6}$ \\
			Seniority (3) &   $3.14$ & $9 \times 10^{-4}$ & $8 \times 10^{-6}$ \\
			Gender (2) &   $2.36$ & $9 \times 10^{-3}$ & $2 \times 10^{-3}$ \\			
			First name initial (17) &  $0.74$& $2 \times 10^{-1}$ & $2 \times 10^{-1}$	\\
			Last name initial (3)&$ -0.46$& $7\times 10^{-1}$ & $7 \times 10^{-1}$
\end{tabular}
		\caption{\label{Tab: ModularityEnron} Analysis of the data of Fig.\ \ref{Fig:EnronAdjMats}, using modularity derived from multiple covariate-based community assignments.}
\end{table}

This network and its covariates form a substantially richer dataset than those treated above. The data come from the Enron corporation \cite{perry2013point}: as part of a U.S.\ government investigation following allegations of fraud, the email activities of senior employees from 1998--2002 were made public. Following the analysis in \cite{perry2013point}, we exclude all emails that have been sent \emph{en masse} (to more than five recipients), leading to 32261 pairwise email exchanges between 153 employees.  To model this network we will use the full flexibility afforded by Definition \ref{Def:DegreebasedModel}, following the four steps described in the previous section to determine a $p$-value corresponding to each covariate.

Step 1: To construct a suitable model for the observed multi-edges $A_{ij}$, we compare four different distributions satisfying the assumptions of Definition \ref{Def:DegreebasedModel}: $\operatorname{Poisson}(\pi_i \pi_j)$, $\operatorname{NegativeBinomial}(\pi_i \pi_j,r)$ with common shape parameter $r$, and zero-inflated versions of both. Figure \ref{Fig:ModlFit} shows how well these distributions model the multi-edges. Even without zero-inflation, the negative Binomial distribution yields a good fit, particularly in the right tail. A formal model comparison via suitable likelihood ratio tests \cite{cameron1986econometric} confirms this: as Table \ref{Tab:ModlFit} shows, the negative Binomial achieves the best balance between fitting the observed data (residual deviance) and model complexity (degrees of freedom). We thus choose the model
\begin{align} \label{25022016}
A_{ij} \sim \operatorname{NegativeBinomial}(\pi_i \pi_j,r).
\end{align}
Step 2: To verify the assumptions of Definition~\ref{Def:DegreebasedModel} for our data, we first assess Assumptions \ref{nonodeincontrol} and \ref{sparse} exactly as before. Computing quartiles $Q_1$--$Q_3$ of the degrees---$68, 200, 564$---we see that $Q_3/Q_2$ and $Q_1/\sqrt{Q_2}$ are both of order one.  Assumption \ref{notcomplete} ($\max_i \pi_i / \sqrt{n} $ shrinking) can be analogously assessed via $Q_3/(n\sqrt{Q_2})$. Assumptions \ref{over-dispersed} and \ref{skewed} require $\V A_{ij} / \E A_{ij} = 1 + \pi_i \pi_j/r$ and $ \smash{\E \left[\left(A_{ij} - \E A_{ij}\right)^3\right]} / \V A_{ij} = 1 + 2 \pi_i \pi_j / r$ to be bounded. To assess this, we observe that a maximum-likelihood estimate of $r$ \cite{cameron1986econometric} yields $\hat{r} = 0.047$, while the first three quartiles of $\smash{\pij}$ are respectively $0.16, 0.59, 2.1$.

\begin{figure}[t!]
\centering\scalebox{0.93}{\input{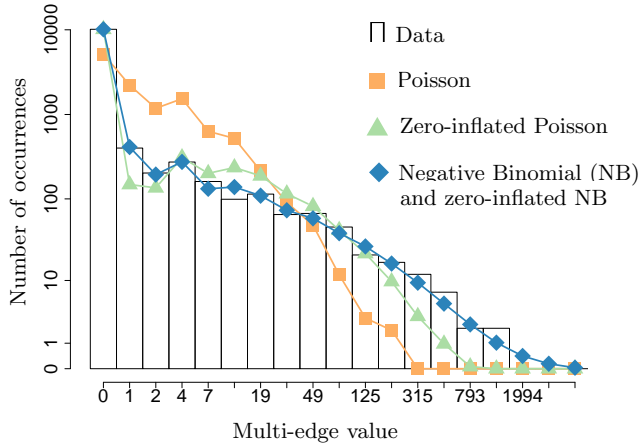}}
			\caption{\label{Fig:ModlFit} Observed versus expected email counts for maximum-likelihood fits of four different models satisfying Definition~\ref{Def:DegreebasedModel}.}
\end{figure}
\begin{table}[t!]
	\centering\vspace{0.5\baselineskip}%
		\begin{tabular}{ l r r r}
			\multicolumn{1}{l}{Model for the} & \multicolumn{1}{c}{Degrees}  &  \multicolumn{1}{c}{Residual} & \multicolumn{1}{c}{Relative} \\
			\multicolumn{1}{l}{multi-edges $A_{ij}$} &\multicolumn{1}{c}{of freedom}    &  \multicolumn{1}{c}{deviance} & \multicolumn{1}{c}{change}  \\[0.15cm] \hline \\
		    Poisson & $153$ & $142031$ & $-39 \%$\\[0.05cm]
			Zero-inflated Poisson & $154$ & $57070$ & $-37 \%$\\[0.05cm]
			Negative Binomial (NB)$\!\!\!\!\!\!\!\!\!\!\!\!$ & $154$ &$12671$ & $-19 \%$ \\[0.05cm]
			Zero-inflated NB & $155$  &  $12671$ & $0 \%$ 	
\end{tabular}
		\caption{\label{Tab:ModlFit}Goodness-of-fit versus model complexity for the models in Fig.\ \ref{Fig:ModlFit} (starting from the $1$-parameter model $\operatorname{Poisson}\left(\lambda\right)$, relative to a saturated negative Binomial model with $r \to \infty$).}
\end{table}

Step 3: To estimate $\muMod$ and $\sigMod$ in Theorem \ref{Thm:DensityM}, we substitute $\hat{\pi}_i$ for $\pi_i$ in Eqs.\ \eqref{muMod} and \eqref{sigMod} exactly as before. Recall, however, that to estimate $s$ we also require an estimate of $\V A_{ij}$ in Eq.\ \eqref{sigMod}. Under the parametrization of Eq.\ \eqref{25022016}, it follows that
\begin{align} \label{VarNegBin}
  \V A_{ij} = \pi_i \pi_j \left(1 + \pi_i \pi_j/r\right).
\end{align}
Thus, $\V A_{ij}$ can be estimated by substituting $\hat{\pi}_i$ for $\pi_i$ and $\hat{r}$ for $r$ in \eqref{VarNegBin}. This yields the required estimators $\smash{\hat{\muMod}}$ and $\smash{\hat{\sigMod}}$.

Step 4: To calculate $p$-values, we must first compute $(\smash{\widehat{Q}} - \smash{\hat{\muMod}} ) / \hat{\sigMod}$ for each covariate. In advance of our analysis, we would expect that employee gender, seniority, and department might reflect aspects of community structure in email interactions. In contrast, we would expect covariates based on the first or last name of each individual to be non-informative. Figure \ref{Fig:EnronAdjMats} illustrates, in decreasing order of $(\smash{\widehat{Q}} - \smash{\hat{\muMod}} ) / \hat{\sigMod}$, the observed structure of our data when grouped by covariate.

Table \ref{Tab: ModularityEnron} reports two approximate $p$-values per covariate, in contrast to the previous section.  The first of these derives (via Eq.\ \eqref{Eq:pval}) from Theorem \ref{Thm:DensityM}, which shows the limiting distribution of $(\smash{\widehat{Q}} - \smash{\hat{\muMod}} ) / \hat{\sigMod}$ under the assumed model to be a standard Normal.  The second is based on $10^7$ replicates of the parametric bootstrap, whereby we fit a negative Binomial model to the data and then simulate from the fitted values to obtain an empirical finite-sample distribution.  Table~\ref{Tab: ModularityEnron} indicates that our asymptotic theory is somewhat conservative in this setting, leading as it does here to larger $p$-values than the bootstrap.

Finally, considering these $p$-values in more detail, we see from Table \ref{Tab: ModularityEnron} that for the covariates of department, gender, and seniority, all $p$-values fall below $1 \%$ (leading to a corrected total of 5\% after adjusting for multiple comparisons).  In contrast, we obtain large $p$-values for first- and last-name covariates. This matches our expectations that department, gender, and seniority are likely to have an impact on email interactions, while there is no obvious reason why this should hold for name-related covariates.

\section{Discussion}

Networks have richer and more varied structure than can be described by a single ``best'' community assignment. To reflect this, we have introduced an approach which exploits the structural information captured by covariates, each of which may describe different aspects of community structure in the data. In contrast to community \emph{detection} per se, this approach allows us to assess the significance of a given, interpretable community assignment with respect to the observed network structure. As described in the data analysis examples above, our method leads to the identification of structurally significant community assignments, ultimately yielding a better understanding of the network under study.

In technical terms, we have established a central limit theorem for modularity under a nonparametric null model, yielding $p$-values to assess the significance of observed community structure.  The model we introduce shows explicitly how modularity measures variability in the data that cannot be explained solely by node-specific propensities for connection.  What is more, modularity has more explanatory power than a classical (chi-squared) goodness-of-fit statistic: by aggregating the estimated \emph{signed} residuals $A_{ij}-d_i d_j/\sum_l d_l$ within every network community, it measures the global tendency of a given community assignment to explain the observed network structure.

To advance the state of the art in network analysis, we as a research community must use this explanatory power to understand the effects of multiple observed communities on network structure. Our work here represents a first step in this direction: we use the explanatory power of modularity to assess the significance of observed community structure relative to a null model.  This opens the door to more advanced uses of multiple observed community assignments within formal statistical modeling frameworks.  This is an important next step, since we see clear evidence here that multiple groupings may explain different aspects of a network's community structure.

\section*{Acknowledgments}
The authors thank Dr.\ Leon Danon for sharing the data on jazz musicians from \cite{gleiser2003community} and Mar\'{\i}a Dolores Alfaro Cuevas for producing Fig.\ \ref{FigWithinBetweenPolbooks}. This work was supported in part by the US Army Research Office under Multidisciplinary University Research Initiative Award 58153-MA-MUR; by the US Office of Naval Research under Award N00014-14-1-0819; by the UK Engineering and Physical Sciences Research Council under Mathematical Sciences Established Career Fellowship EP/K005413/1; by the UK Royal Society under a Wolfson Research Merit Award; and by Marie Curie FP7 Integration Grant PCIG12-GA-2012-334622 within the 7th European Union Framework Program.

\appendix

\section{Notation and assumptions}
For the following proofs we will always consider an undirected random graph on $n$ nodes with no self-loops. We model the edges $A_{ij}$ as independent random variables with expectation
						\begin{align*}
								\E A_{ij}= \pi_i \pi_j, \quad A_{ij}\geq 0; \quad 1 \leq i< j\leq n
						\end{align*}
where $\bm{\pi}=\left(\pi_1, \ldots, \pi_n\right) \in \R^n_{>0}$.  We will denote the degree of node $i$ as $d_i$; i.e., $d_i=\sum_{j \neq i} A_{ij}$. The remaining five assumptions of Definition \ref{Def:modularityempirical} of the degree-based model are not all needed at all times and will therefore be mentioned explicitly. For convenience we restate the assumptions below,
all of which reference a sequence of networks where $n \rightarrow \infty$.

\begin{enumerate}
	\item No node dominates the network; i.e, $n \max_i \pi_i/\left\|\bm{\pi}\right\|_1 = \mathcal{O}(1)$;
	\item The network is not too sparse; i.e., $\min_i \pi_i = \omega\left(1/ \sqrt{n}\right)$;
	\item The expectation of each edge does not diverge too quickly; i.e., $\max_i \pi_i = o\left( \sqrt{n}\right)$;
	\item The ratio of variance to expectation of each edge is controlled; i.e., $\forall i,j: \V A_{ij}/ \E A_{ij} = \Theta(1)$; and 	
\item The skewness of each edge $A_{ij}$ is controlled; i.e., $\forall i,j:$\\ $ \E \left[\left(A_{ij} - \E A_{ij}\right)^3\right]/ \V \left(A_{ij}\right)= \mathcal{O}(1)$.
\end{enumerate}

We use bold letters to denote vectors.

\section{Proof of Theorem 2}
We first show a univariate central limit theorem for the scalar estimator $\hat{\pi}_i=d_i/\sqrt{\| \bm{d} \|_1}$. We then extend this result to the multivariate case, applying the Cram\'er--Wold theorem.\\

Preliminaries:
Since the edges $A_{ij}, i<j$ are independent, it follows as shown in \cite{Olhede2012} that for finite $n$
\begin{align}
\E d_i & = \pi_i \left( \| \bm{\pi} \|_1 - \pi_i \right) , \label{Exp_degree}
\\ \V d_i & = \sum_{i \neq j} \V A_{ij} , \label{Var_degree}
\\ \operatorname{cov} \left( d_i , d_j \right) & = \begin{cases}\V  A_{ij}, &\quad i \neq j\\
\V d_i, &\quad i = j \end{cases}\label{Cov_degrees}\\
	 \E \left\|\bm{d}\right\|_1 & = \| \bm{\pi} \|_1^2 - \| \bm{\pi} \|_2^2, \label{Exp_1norm_degree}
\\ \V \left\|\bm{d}\right\|_1 & = 2 \sum_{i=1}^n \V d_i. \label{Var_1norm_degree}
\end{align}

\begin{appxtheorem}[Central limit theorem for $\hat{\pi}_i$] \label{App:GeneralCLTPoisson}
Consider Assumptions~\ref{nonodeincontrol}--\ref{skewed}. Define $\hat{\pi}_i=d_i/\sqrt{\| \bm{d} \|_1}$ as an estimator of $\pi_i$. Then as $n \rightarrow \infty$,
\begin{align*}
								\frac{\hat{\pi}_i-\pi_i}{\sqrt{\V d_i/ \E \left\|\bm{d}\right\|_1}} \stackrel{d}{\rightarrow} \No(0,1).
\end{align*}								
Furthermore, $\sqrt{ \V d_i/\E \left\|\bm{d}\right\|_1} = \mathcal{O}\left(1/ \sqrt{n}\right)$, and can be consistently estimated  using a plug-in estimator for $A_{ij} \sim \operatorname{Bernoulli}\left(\pi_i \pi_j\right)$ and $A_{ij} \sim \operatorname{Poisson}\left(\pi_i \pi_j\right)$.				
\end{appxtheorem}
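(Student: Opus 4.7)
The plan is to decompose $\hat{\pi}_i - \pi_i$ into a stochastic piece driven by $d_i - \E d_i$ and two deterministic pieces, then to (i) establish a CLT for $d_i$ via Lyapunov/Lindeberg, (ii) establish a law of large numbers for $\|\bm d\|_1$ via Chebyshev, and (iii) show that after rescaling by $\sqrt{\V d_i / \E \|\bm d\|_1}$, all non-stochastic remainders vanish so that Slutsky's theorem delivers the claim.

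Concretely, I would write
\begin{equation*}
\hat{\pi}_i - \pi_i \;=\; \frac{d_i - \E d_i}{\sqrt{\|\bm d\|_1}} \;+\; \left(\frac{\E d_i}{\sqrt{\|\bm d\|_1}} - \pi_i\right),
\end{equation*}
and treat the two terms separately. For the stochastic term, since $A_{ij}$, $j\neq i$, are independent (in $j$) nonnegative random variables with $\V A_{ij} = \Theta(\pi_i\pi_j)$ by Assumption~\ref{over-dispersed} and bounded standardized third moment by Assumption~\ref{skewed}, the Lyapunov ratio is of order $1/\sqrt{\V d_i}$, which vanishes because $\V d_i = \Theta(\pi_i \|\bm\pi\|_1) = \Theta(\pi_i n \bar\pi)$ diverges under Assumption~\ref{sparse}. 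Hence $(d_i - \E d_i)/\sqrt{\V d_i} \stackrel{d}{\to} \No(0,1)$. Simultaneously, Chebyshev applied with Eqs.~\eqref{Exp_1norm_degree}--\eqref{Var_1norm_degree} gives $\V \|\bm d\|_1 / (\E \|\bm d\|_1)^2 = \Theta(1/\|\bm\pi\|_1^2) \to 0$, so $\|\bm d\|_1 / \E \|\bm d\|_1 \stackrel{P}{\to} 1$; combining these two via Slutsky, $\sqrt{\E \|\bm d\|_1/\V d_i}\cdot(d_i - \E d_i)/\sqrt{\|\bm d\|_1} \stackrel{d}{\to} \No(0,1)$.

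For the deterministic term, replacing $\sqrt{\|\bm d\|_1}$ by $\sqrt{\E\|\bm d\|_1}(1+o_P(1))$ and using $\E d_i = \pi_i(\|\bm\pi\|_1 - \pi_i)$ and $\E\|\bm d\|_1 = \|\bm\pi\|_1^2 - \|\bm\pi\|_2^2$, a Taylor expansion of $\sqrt{\,\cdot\,}$ yields
\begin{equation*}
\frac{\E d_i}{\sqrt{\E\|\bm d\|_1}} - \pi_i \;=\; -\frac{\pi_i^2}{\|\bm\pi\|_1} + \frac{\pi_i \|\bm\pi\|_2^2}{2\|\bm\pi\|_1^2} + \text{lower order},
\end{equation*}
whose magnitude is $\mathcal{O}(\pi_i \max_l \pi_l / \|\bm\pi\|_1)$. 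Dividing this bias by $\sqrt{\V d_i/\E\|\bm d\|_1} = \Theta(\sqrt{\pi_i/\|\bm\pi\|_1})$ produces a remainder of order $\sqrt{\pi_i \bar\pi / n}$ by Assumption~\ref{nonodeincontrol}, and this vanishes since $\pi_i\bar\pi = o(n)$ by Assumption~\ref{notcomplete}. The rate $\sqrt{\V d_i/\E\|\bm d\|_1} = \mathcal{O}(1/\sqrt n)$ follows from the same order estimates. Finally, for $A_{ij} \sim \operatorname{Bernoulli}(\pi_i\pi_j)$ or $\operatorname{Poisson}(\pi_i\pi_j)$, $\V d_i$ and $\E\|\bm d\|_1$ are explicit continuous functions of $\bm\pi$; the consistency $\hat\pi_i \stackrel{P}{\to} \pi_i$ just established, together with Assumption~\ref{nonodeincontrol} controlling $\max \pi_i / \bar\pi$, yields ratio-consistency of the plug-in estimator by continuous mapping.

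The main obstacle I expect is the bias-to-standard-deviation comparison in the middle paragraph: the bias is of order $\pi_i\bar\pi/n$ times the standard deviation, and showing this is $o(1)$ is precisely where the upper sparsity bound (Assumption~\ref{notcomplete}, $\max_i\pi_i = o(\sqrt n)$) earns its keep. A secondary technical nuisance is that Assumption~\ref{skewed} bounds the signed third central moment rather than the third \emph{absolute} moment needed by the classical Lyapunov statement; for nonnegative $A_{ij}$ with $\V A_{ij} = \Theta(\E A_{ij})$ the two quantities are of the same order via the identity $\E A^3 = \E(A-\mu)^3 + 3\mu\V A + \mu^3$, so this conversion is routine but must be recorded carefully before invoking the CLT.
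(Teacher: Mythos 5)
Your route is the same as the paper's: the additive split of $\hat{\pi}_i-\pi_i$ into $(d_i-\E d_i)/\sqrt{\|\bm{d}\|_1}$ plus $\E d_i/\sqrt{\|\bm{d}\|_1}-\pi_i$ is, after standardization, exactly the paper's decomposition into the terms $T_1$, $T_2$, $T_3$ of Eq.~\eqref{App:splitForCLT}, and you use the same ingredients (Lindeberg--Feller for $d_i$, Chebyshev for $\|\bm{d}\|_1$, a Taylor expansion for the deterministic bias, Slutsky to combine) and the same order bounds $\sqrt{\V d_i/\E\|\bm{d}\|_1}=\Theta\bigl(\sqrt{\pi_i/\|\bm{\pi}\|_1}\bigr)=\mathcal{O}(1/\sqrt{n})$. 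One caution in the bias step: the bare replacement $\sqrt{\|\bm{d}\|_1}=\sqrt{\E\|\bm{d}\|_1}\,\bigl(1+o_P(1)\bigr)$ is not enough, because the $o_P(1)$ multiplies $\pi_i$ and is then divided by $\sqrt{\V d_i/\E\|\bm{d}\|_1}$, i.e.\ inflated by a factor of order $\sqrt{\E d_i}\to\infty$; you need the quantitative rate $\mathcal{O}_P\bigl(1/\sqrt{\E\|\bm{d}\|_1}\bigr)$, which your own Chebyshev computation supplies and which the paper packages as Lemma~\ref{09042014} ($\sqrt{\|\bm{d}\|_1}-\sqrt{\E\|\bm{d}\|_1}=\mathcal{O}_P(1)$).

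Two steps are genuine gaps. First, the plug-in consistency of $\sqrt{\V d_i/\E\|\bm{d}\|_1}$ does not follow from ``$\hat{\pi}_i\stackrel{P}{\rightarrow}\pi_i$ plus continuous mapping'': the estimand is a function of all $n$ parameters, the number of arguments grows with $n$, and the quantity itself tends to zero, so what must be shown is consistency of the ratio; that requires aggregate control of $\|\bm{d}\|_1/\E\|\bm{d}\|_1$, $\|\bm{d}\|_2^2/\|\bm{d}\|_1^2$ and $\widehat{\V d}_i/\V d_i$, which the paper carries out in the dedicated Theorem~\ref{ThmConsistency} and the Bernoulli/Poisson computations following it. Second, your conversion of Assumption~\ref{skewed} into an absolute-moment Lyapunov condition is wrong at the stated generality. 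By Lyapunov's inequality $\E|A-\mu|^3\ge(\V A)^{3/2}$, so once $\E A_{ij}=\pi_i\pi_j\to\infty$ (allowed up to $o(n)$ by Assumption~\ref{notcomplete}) the absolute third central moment cannot be of the same order as the signed one when the latter is $\mathcal{O}(\V A_{ij})$; e.g.\ for $\operatorname{Poisson}(\mu)$ edges $\E[(A-\mu)^3]=\mu$ while $\E|A-\mu|^3=\Theta(\mu^{3/2})$. The identity $\E A^3=\E[(A-\mu)^3]+3\mu\V A+\mu^3$ only gives $\E|A-\mu|^3=\mathcal{O}(\mu^3)$, and with that bound the Lyapunov ratio is only $\mathcal{O}\bigl((\max_l\pi_l)^3/\sqrt{n}\bigr)$, which Assumptions~\ref{nonodeincontrol}--\ref{skewed} do not force to vanish. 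You have correctly spotted a real subtlety --- the paper itself verifies its ``Lyapunov condition'' with signed central moments, which is not the classical condition --- but the repair is not the routine conversion you describe: it needs bounded edge means (Bernoulli), or a sharper tail bound such as $\E|A-\mu|^3\le\E[(A-\mu)^3]+2\mu^2\sqrt{\V A}$ together with additional growth restrictions, or a direct verification of the Lindeberg condition for the specific edge laws considered.
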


\begin{proof}
			The proof is a generalization of the proof of Theorem 3.2 in \cite{Olhede2012}, which assumes Bernoulli edges and a power law degree distribution. We write%
\begin{align}\label{App:splitForCLT}
	\frac{\hat{\pi}_i-\pi_i}{\sqrt{ \V d_i/ \E \left\|\bm{d}\right\|_1}} 				
											& = \left[ \underbrace{\frac{d_i - \E d_i}{\sqrt{\V d_i }}}_{T_1} + \underbrace{\frac{\E d_i - \pi_i \sqrt{\| \bm{d} \|_1}}{\sqrt{\V d_i }}}_{T_2} \right] \underbrace{\sqrt{\frac{\E \left\|\bm{d}\right\|_1 }{\| \bm{d} \|_1}} }_{T_3}.
\end{align}
			To deduce the required result, we show that $T_1$ converges in distribution to a $\No(0,1)$ random variable and $T_2$ and $T_3$ go in probability to 0 and 1, respectively. Slutsky's theorem enables us to
			combine the results and to obtain the claimed convergence in distribution. \\
			
		 \underline{Term $T_1$}:
				Each degree $d_i=\sum_{j \neq i} A_{ij}$ is a sum of independent random variables. From Assumption~\ref{sparse} $\left( \Rightarrow \E d_i \rightarrow \infty\right)$ and Assumption~\ref{over-dispersed} ($\E A_{ij} = \Theta\left(\V A_{ij}\right)$), it follows that $\V d_i \rightarrow \infty$. Since in addition, the skewness of each edge $A_{ij}$ is asymptotically  bounded (Assumption~\ref{skewed}), the Lyapunov condition for exponent $1$ is satisfied; i.e.,
				\begin{align*}
					\frac{\sum_{j \neq i} \E \left[\left(A_{ij} - \E A_{ij}\right)^3\right]}{\left[\sum_{j \neq i} \V A_{ij} \right]^{3/2}} \rightarrow 0.
				\end{align*}
				
				 Hence, the Lindeberg--Feller Central Limit Theorem allows us to conclude that $T_1 \stackrel{d}{\rightarrow} \No(0,1)$.\\

	\underline{Term $T_2$}:
		We write
				\begin{align}
								T_2 &= \frac{\E d_i - \pi_i \sqrt{\| \bm{d} \|_1}}{\sqrt{\V d_i }} \notag
								    \\&= \underbrace{\frac{\E d_i - \pi_i \sqrt{\E \|\bm{d}\|_1 }}{\sqrt{\V d_i }}}_{a)} -\underbrace{\frac{ \pi_i \sqrt{\| \bm{d} \|_1}  - \pi_i \sqrt{\E \|\bm{d}\|_1 } }{\sqrt{\V d_i }}}_{b)}. \label{09092015}
				\end{align}
Term $T_2$ converges in probability to 0 since both a) the first ratio converges to 0 and b) the second ratio converges to 0 in probability.

				a) This convergence is driven by the fact that $\E d_i - \pi_i \sqrt{\E \|\bm{d}\|_1 } = \mathcal{O}(1)$  (see Eqs.\ \eqref{Exp_degree} and \eqref{Exp_1norm_degree}) while $\V d_i \rightarrow \infty$. More precisely,
				\begin{align}
							 \frac{\E d_i - \pi_i \sqrt{\E \|\bm{d}\|_1 }}{\sqrt{\V d_i }}
							 				&= \frac{\pi_i \left\|\bm{\pi}\right\|_1 \left[ 1 - \sqrt{1 - \left\|\bm{\pi}\right\|_2^2 / \left\|\bm{\pi}\right\|_1^2}\right]- \pi_i^2 }{\sqrt{\V d_i }}. \label{0809}
				\end{align}
Considering $\bm{\tilde{\pi}}=\bm{\pi}/ \max_{j} \pi_j$, we can conclude from $\| \bm{\tilde{\pi}} \|_2^2 \leq \| \bm{\tilde{\pi}} \|_1$ that
\begin{align} \label{08102015}
	\frac{\| \bm{\pi} \|_2^2 }{ \| \bm{\pi} \|_1^2}
	& = \frac{ \left(\max_{j} \pi_j\right)^2  \| \bm{\tilde{\pi}} \|_2^2 }{ \left(\max_{j} \pi_j\right)^2 \| \bm{\tilde{\pi}} \|_1^2}
 \leq \frac{1 }{ \| \bm{\tilde{\pi}} \|_1} =  \frac{\max_{j} \pi_j}{ \| \bm{\pi} \|_1}.
\end{align}
Assumption~\ref{nonodeincontrol} implies that
$\max_{j} \pi_j/ \| \bm{\pi} \|_1 = \mathcal{O}\left(1/ n\right)$, and thus we conclude
\begin{align}\label{twonormoveroneneormsqu}
	\frac{\| \bm{\pi} \|_2^2 }{ \| \bm{\pi} \|_1^2}
	& = \mathcal{O}\left(\frac{1}{ n}\right).
\end{align}
 This allows us to apply a convergent Taylor expansion of $\sqrt{1 - x}$ at 0  in Eq.~\eqref{0809}:
				\begin{align}
					\label{08092015}
						& \frac{\E d_i - \pi_i \sqrt{\E \|\bm{d}\|_1 }}{\sqrt{\V d_i }}
\\ & \quad = \frac{\pi_i \left\|\bm{\pi}\right\|_1 \left[ 1 - \left(1 - \left\|\bm{\pi}\right\|_2^2 / 2 \left\|\bm{\pi}\right\|_1^2 + o\left(\left\|\bm{\pi}\right\|_2^2 / \left\|\bm{\pi}\right\|_1^2\right)\right)\right]- \pi_i^2 }{\sqrt{\V d_i }}	\notag
\\ &\quad  = \frac{\pi_i \left[ \left\|\bm{\pi}\right\|_2^2 / 2 \left\|\bm{\pi}\right\|_1 + o\left(\left\|\bm{\pi}\right\|_2^2 / \left\|\bm{\pi}\right\|_1\right)\right]- \pi_i^2 }{\sqrt{\V d_i }}	\notag	
\\ &\quad  \leq \frac{\pi_i \left[ \max_{j} \pi_j / 2+ o\left(\max_{j} \pi_j\right)\right]- \pi_i^2 }{\sqrt{\V d_i }}	\notag	
\qquad \text{(see Eq.\ \eqref{08102015})}
\\ &\quad  = \Theta\left(\frac{\pi_i  \left(\max_{j} \pi_j  - \pi_i\right) }{\sqrt{\E d_i }}	\notag\right)	
\qquad \text{(Assumption~\ref{over-dispersed})}
\\ &\quad  = \Theta\left(\frac{\sqrt{\pi_i}  \left(\max_{j} \pi_j  - \pi_i\right) }{\sqrt{\left\|\pi\right\|_1 -\pi_i}}	\notag\right)	
\\ &\quad  = \mathcal{O}\left(\frac{  \max_{j} \pi_j  - \pi_i }{\sqrt{n }}	\notag\right). \notag		
\qquad \text{(Assumption~\ref{nonodeincontrol})}
				\end{align}				
				Since $\pi_j = o\left(\sqrt{n}\right)$ for all $j$ (Assumption~\ref{notcomplete}), it follows that the left-hand side of Eq.\ \eqref{08092015} converges to 0 in $n$.\\
			  b) We show below that the second ratio $\left( \pi_i \sqrt{\| \bm{d} \|_1}  - \pi_i \sqrt{\E \|\bm{d}\|_1 }\right) / \sqrt{\V d_i }$  in Eq.\ \eqref{09092015} converges in probability to 0; this follows since $\pi_i / \sqrt{\V d_i} \rightarrow 0$ under Assumptions~\ref{nonodeincontrol} and \ref{over-dispersed} (see c) below) and $ \sqrt{\| \bm{d} \|_1}  - \sqrt{\E \|\bm{d}\|_1 } =\mathcal{O}_P(1)$ (see Lemma~\ref{09042014} below).

c) From Assumption~\ref{over-dispersed} it follows that		
\begin{align} \notag
		\frac{\pi_i}{\sqrt{\V d_i }} &= \Theta\left(\frac{\pi_i}{\sqrt{\E d_i }}\right) = \Theta\left(\sqrt{\frac{\pi_i}{\left\|\bm{\pi}\right\|_1-\pi_i}}\right)
		\\ &= \mathcal{O}\left(1/ \sqrt{n}\right). \qquad \text{(Assumption~\ref{nonodeincontrol})} \label{10112015b}
\end{align}		
\begin{appxlemma} 	\label{09042014}
	Consider Assumptions~\ref{sparse}--\ref{skewed}. Then, $\sqrt{\| \bm{d} \|_1}  - \sqrt{\E \|\bm{d}\|_1 }=\mathcal{O}_P(1)$.
\end{appxlemma}
				
				\begin{proof}
						Observe that the square root function has one continuous derivative at 1. A Taylor expansion in probability of $\sqrt{\|\bm{d}\|_1 / \E \|\bm{d}\|_1 }$ about 1 requires in addition \cite[p.~201]{brockwell1991time} that
					
\begin{enumerate}
	\item [I.]$\exists a \in \mathds{R}: \|\bm{d}\|_1 / \E \|\bm{d}\|_1  = a + \mathcal{O}_P(r_n)$; with
	\item [II.]$r_n \rightarrow 0$ as $n \rightarrow \infty$.
\end{enumerate}						
						
I.~It follows from Chebyshev's inequality that
\begin{align} \label{normd1concentrates}
\|\bm{d}\|_1 / \E \|\bm{d}\|_1 	= 1 + \mathcal{O}_P \left(\sqrt{\V \|\bm{d}\|_1 } / \E \|\bm{d}\|_1 \right).
\end{align}
						
II. As a consequence of I., $r_n = \sqrt{\V \|\bm{d}\|_1 } / \E \|\bm{d}\|_1$. From Eq.\ \eqref{Exp_1norm_degree} and Assumption \ref{sparse} $\left(\Rightarrow \E d_i \rightarrow \infty\right)$ it follows that $\E \left\|\bm{d}\right\|_1 \rightarrow \infty$. Since $A_{ij}$ are independent for $i<j$, and since we assume $\V A_{ij} / \E A_{ij} = \Theta(1)$ (Assumption~\ref{over-dispersed}), it holds that
\begin{align}
 \frac{\V \|\bm{d}\|_1 }{ \E \|\bm{d}\|_1} &= \frac{\V\left( 2 \sum_{j=1}^n \sum_{i<j} A_{ij}\right)}{ \E \left( 2 \sum_{j=1}^n \sum_{i<j} A_{ij}\right)} \notag
 \\ &= \frac{4 \sum_{j=1}^n \sum_{i<j} \V\left(  A_{ij}\right)}{ 2 \sum_{j=1}^n \sum_{i<j}\E \left(  A_{ij}\right)}  \notag
 \\ & =\Theta(1). \label{ratio of normsorder1}
\end{align}
It follows that the ratio $\sqrt{\V \|\bm{d}\|_1 } / \E \|\bm{d}\|_1 \rightarrow 0$.

We now can apply a convergent Taylor expansion in probability:
\begin{align} \notag
								\sqrt{\frac{\|\bm{d}\|_1}{\E \|\bm{d}\|_1 }} &= 1+ \frac{1}{2} \left(\frac{\|\bm{d}\|_1}{\E \|\bm{d}\|_1 } -1\right) + o_P \left(\frac{\sqrt{\V \|\bm{d}\|_1 }}{\E \|\bm{d}\|_1 }\right)
\\								\Leftrightarrow \quad \sqrt{\|\bm{d}\|_1} - \sqrt{\E \|\bm{d}\|_1 } &=    \frac{\sqrt{\V \|\bm{d}\|_1}}{\sqrt{\E \|\bm{d}\|_1 }} \left[ \frac{1}{2} \left(\frac{\|\bm{d}\|_1-\E \|\bm{d}\|_1  }{\sqrt{ \V \|\bm{d}\|_1}} \right) + o_P \left(1\right)\right]. \label{10112015}
\end{align}
				Since the term $\|\bm{d}\|_1/2= \sum_{j =1}^n \sum_{i<j} A_{ij}$ is a sum of independent random variables, we apply the Lindeberg--Feller central limit theorem analogously to Term $T_1$: From Assumptions~\ref{sparse}--\ref{skewed}, it follows that
\begin{align*}		
	\frac{\|\bm{d}\|_1 - \E \|\bm{d}\|_1}{ \sqrt{\V \|\bm{d}\|_1 }} \stackrel{d}{\rightarrow} \No(0,1).
\end{align*}
						Since $\V \|\bm{d}\|_1 / \E \|\bm{d}\|_1 =\Theta(1)$ by Eq.\ \eqref{ratio of normsorder1}, we conclude from Eq.\ \eqref{10112015} the result of Lemma~\ref{09042014}; i.e., $\sqrt{\| \bm{d} \|_1}  - \sqrt{\E \|\bm{d}\|_1 }=\mathcal{O}_P(1)$.
		\end{proof}
		
 As a consequence of Lemma~\ref{09042014}, we now know	that the numerator of term~b) in Eq.\ \eqref{09092015} is bounded in probability. Since we show in Eq.\ \eqref{10112015b} that $\pi_i / \sqrt{\V d_i} = \mathcal{O}\left(1/ \sqrt{n}\right)$, it follows that
 \begin{align*}
 		b) = \frac{ \pi_i \sqrt{\| \bm{d} \|_1}  - \pi_i \sqrt{\E \|\bm{d}\|_1 } }{\sqrt{\V d_i }} \stackrel{P}{\rightarrow} 0.
 \end{align*} In turn, this completes the proof of the convergence of Term 2 (see Eq.\ \eqref{09092015}); i.e.,
\begin{align} \label{proofTermT2}
								T_2 = \underbrace{\frac{\E d_i - \pi_i \sqrt{\E \|\bm{d}\|_1 }}{\sqrt{\V d_i }}}_{a)} -\underbrace{\frac{ \pi_i \sqrt{\| \bm{d} \|_1}  - \pi_i \sqrt{\E \|\bm{d}\|_1 } }{\sqrt{\V d_i }}}_{b)} \stackrel{P}{\rightarrow} 0.
\end{align}			

\underline{Term $T_3$}\\
Combining Eqs.\ \eqref{normd1concentrates} and~\eqref{ratio of normsorder1}, we know that
\begin{align*}
	\frac{\| \bm{d} \|_1}{\E \left\|\bm{d}\right\|_1} &
	  = 1 + \mathcal{O}_P\left(\frac{1}{ \sqrt{\E \left\|\bm{d}\right\|_1 }}\right).
\end{align*}
This converges in probability to 1 because of Assumption~\ref{sparse} $\left(\Rightarrow  \E \left\|\bm{d}\right\|_1 \rightarrow \infty\right)$.

Applying the continuous mapping theorem, leads to   $\sqrt{\|\bm{d}\|_1 / \E \|\bm{d}\|_1} \stackrel{P}{\rightarrow} 1$. The inverse of a random variable which converges in probability to a constant $c$, must in turn converge to $1/c$, as long as $c \neq 0$ \cite[Theorem 2.1.3]{Lehmann199812}. Thus,
\begin{align} \label{proofTermT3}
	T_3= \sqrt{\frac{\E \left\|\bm{d}\right\|_1}{ \|\bm{d}\|_1}} \stackrel{P}{\rightarrow} 1.
\end{align}

Slutsky's Theorem enables us to combine the results on the convergence of terms $T_1$--$T_3$ to obtain that
\begin{align*}
	\frac{\hat{\pi}_i -\pi_i}{ \sqrt{\V d_i/ \E \|\bm{d}\|_1}} \rightarrow \No(0,1).
\end{align*}
To complete the proof of Theorem~\ref{App:GeneralCLTPoisson}, it remains to show that $ \V d_i /\E \|\bm{d}\|_1= \mathcal{O}(1/n)$, and that it can be consistently estimated  using a plug-in estimator for $A_{ij} \sim \operatorname{Bernoulli}\left(\pi_i \pi_j\right)$ and $A_{ij} \sim \operatorname{Poisson}\left(\pi_i \pi_j\right)$.

Since $\V A_{ij}/ \E A_{ij} = \Theta(1)$ (Assumption~\ref{over-dispersed}), we know that
\begin{align*}
	\sqrt{\frac{n \V d_i}{\E \|\bm{d}\|_1}} 	
&= \sqrt{\frac{n \; \Theta \left(\E d_i\right)}{\E \|\bm{d}\|_1}}
\\	&	= \sqrt{\frac{n \; \Theta\left(\pi_i \left( \| \bm{\pi} \|_1 - \pi_i \right)\right)}{\| \bm{\pi} \|_1^2 - \| \bm{\pi} \|_2^2}}
\\	&	= \sqrt{\frac{n \pi_i}{\| \bm{\pi} \|_1} \Theta \left(\frac{ 1 - \pi_i/\| \bm{\pi} \|_1 }{1 - \| \bm{\pi} \|_2^2/\| \bm{\pi} \|_1^2}\right)} .
\end{align*}
We know that $n \pi_i / \| \bm{\pi} \|_1 = \mathcal{O}(1)$ (Assumption~\ref{nonodeincontrol}) and we have seen in Eq.\ \eqref{twonormoveroneneormsqu} that $\| \bm{\pi} \|_2^2/\| \bm{\pi} \|_1^2 = \mathcal{O}(1/n)$ (also from Assumption~\ref{nonodeincontrol}). Hence, $\sqrt{\V d_i/\E \|\bm{d}\|_1}$ $= \mathcal{O}(1/ \sqrt{n})$.

We defer the proof of consistency of the plug-in estimator of $\V d_i /\E \|\bm{d}\|_1$ for $A_{ij} \sim \operatorname{Bernoulli}\left(\pi_i \pi_j\right)$ and $A_{ij} \sim \operatorname{Poisson}\left(\pi_i \pi_j\right)$ to Theorem~\ref{ThmConsistency}, where we show a more general statement.
\end{proof}

Having shown a univariate central limit theorem for each $\hat{\pi}_i$, we are now ready to extend this result to the multivariate case. The Corollary below is identical to Theorem~2 in the main text.

\begin{appxcorollary} [Multivariate central limit theorem for $\hat{\pi}_i$s] \label{multiCLTp}
Consider Assumptions~\ref{nonodeincontrol}--\ref{skewed}. Estimate $\pi_i$ by $\hat{\pi}_i=d_i / \sqrt{\left\|\bm{d}\right\|_1}$ for all $i$ and fix a set of $r$ positive integers as indices, with $r$ finite. Relabeling the indices from 1 to $r$ without loss of generality,
	 \begin{align*}
	\sqrt{\E \left\|\bm{d}\right\|_1} \left(\frac{\hat{\pi}_1-\pi_1}{\sqrt{\V d_1}}, \ldots,  \frac{\hat{\pi}_r-\pi_r}{\sqrt{\V d_r}}\right)'
	 		 \stackrel{d}{\rightarrow}\No(\bm{0}, \bm{I_{r}}).
	 \end{align*}
Furthermore for all $i$, $\sqrt{ \V d_i/\E \left\|\bm{d}\right\|_1} = \mathcal{O}\left(1/ \sqrt{n}\right)$, and can be consistently estimated for $A_{ij} \sim \operatorname{Bernoulli}\left(\pi_i \pi_j\right)$ and $A_{ij} \sim \operatorname{Poisson}\left(\pi_i \pi_j\right)$ using a plug-in estimator.
\end{appxcorollary}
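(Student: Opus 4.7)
The plan is to lift the univariate Theorem~\ref{App:GeneralCLTPoisson} to the multivariate setting via the Cram\'er--Wold device. It suffices to show that for every fixed $\bm{t}=(t_1,\ldots,t_r)'\in\R^r$,
\begin{align*}
Y_n \;:=\; \sqrt{\E\|\bm{d}\|_1}\sum_{i=1}^r\frac{t_i(\hat{\pi}_i-\pi_i)}{\sqrt{\V d_i}} \;\stackrel{d}{\rightarrow}\; \No\bigl(0,\|\bm{t}\|_2^2\bigr).
\end{align*}
Once this one-dimensional convergence is in hand, weak convergence of the full random vector to $\No(\bm{0},\bm{I}_r)$ follows; the asymptotic order $\sqrt{\V d_i/\E\|\bm{d}\|_1}=\mathcal{O}(1/\sqrt{n})$ and the consistency statement (deferred to Theorem~\ref{ThmConsistency}) carry over unchanged from Theorem~\ref{App:GeneralCLTPoisson}.

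The first step is to reuse the decomposition of Eq.\ \eqref{App:splitForCLT}: each summand of $Y_n$ factorises as $(T_1^{(i)}+T_2^{(i)})T_3$, where $T_2^{(i)}\stackrel{P}{\rightarrow}0$ by Eq.\ \eqref{proofTermT2}, and the common factor $T_3\stackrel{P}{\rightarrow}1$ by Eq.\ \eqref{proofTermT3} is independent of $i$. Slutsky's theorem then reduces the problem to showing
\begin{align*}
W_n \;:=\; \sum_{i=1}^r t_i T_1^{(i)} \;\stackrel{d}{\rightarrow}\; \No\bigl(0,\|\bm{t}\|_2^2\bigr).
\end{align*}

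The second step re-expresses $W_n$ as a sum of independent edge contributions. Writing $d_i-\E d_i=\sum_{j\ne i}(A_{ij}-\E A_{ij})$ and swapping the order of summation,
\begin{align*}
W_n \;=\; \sum_{1\le k<l\le n} c_{kl}\,(A_{kl}-\E A_{kl}),\qquad c_{kl} \;:=\; \frac{t_k\,\I\{k\le r\}}{\sqrt{\V d_k}}+\frac{t_l\,\I\{l\le r\}}{\sqrt{\V d_l}},
\end{align*}
which is a sum of independent centered variables. A direct calculation using $\cov(d_i,d_j)=\V A_{ij}$ from Eq.\ \eqref{Cov_degrees} gives $\V W_n=\|\bm{t}\|_2^2+2\sum_{1\le i<j\le r}t_it_j\,\V A_{ij}/\sqrt{\V d_i\V d_j}$. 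Under Assumption~\ref{over-dispersed} we have $\V A_{ij}=\Theta(\pi_i\pi_j)$ and $\V d_i=\Theta(\pi_i\|\bm{\pi}\|_1)$, so each cross term is $\Theta(\sqrt{\pi_i\pi_j}/\|\bm{\pi}\|_1)=\mathcal{O}(1/n)$ by Assumption~\ref{nonodeincontrol}; hence $\V W_n\to\|\bm{t}\|_2^2$.

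Finally, I would apply the Lindeberg--Feller central limit theorem to $W_n$ exactly as it was used for $T_1$ in the univariate proof. The main obstacle is verifying a Lyapunov bound on third moments that now accommodates the pair-dependent coefficients $c_{kl}$. The bulk contribution comes from the $\Theta(rn)$ pairs with $k\le r<l$, where $|c_{kl}|^3=|t_k|^3/(\V d_k)^{3/2}$; together with Assumption~\ref{skewed} and $\sum_{l>r}\V A_{kl}\le \V d_k$ this yields
\begin{align*}
\sum_{k<l}|c_{kl}|^3\,\E\bigl|A_{kl}-\E A_{kl}\bigr|^3 \;=\; \mathcal{O}\!\left(\sum_{k=1}^r\frac{|t_k|^3}{\sqrt{\V d_k}}\right) \;\to\; 0,
\end{align*}
since $\V d_k\to\infty$ by Assumptions~\ref{sparse} and~\ref{over-dispersed}; the $\binom{r}{2}$ pairs with both indices $\le r$ contribute a similarly vanishing amount. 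Slutsky's theorem combined with the Cram\'er--Wold device then delivers the stated multivariate Normal limit.
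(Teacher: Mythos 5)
Your proposal is correct, but it takes a genuinely different route from the paper's proof of Corollary \ref{multiCLTp}. The paper keeps the vector form throughout: it writes the main term as $\bm{D_{11}}\bm{m_{12}} + \bm{m_{13}}$ (Eq.\ \eqref{12102015b}), where $\bm{m_{12}}$ discards all edges among the $r$ tracked nodes so that its components are \emph{exactly} independent; joint normality of $\bm{m_{12}}$ then follows from the univariate Lindeberg--Feller argument plus Cram\'er--Wold applied to a vector with independent coordinates, and the extra bookkeeping consists of showing $\bm{D_{11}} \rightarrow \bm{I_r}$ and $\bm{m_{13}} \stackrel{P}{\rightarrow} \bm{0}$ via the $\mathcal{O}(r/n)$ variance-fraction bound. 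You instead invoke Cram\'er--Wold at the outset, reduce to a single scalar sum $\sum_{k<l} c_{kl}(A_{kl}-\E A_{kl})$ over \emph{all} edges with pair-dependent coefficients, and handle the dependence the paper avoids by construction: you compute $\V W_n$ directly and show the $\binom{r}{2}$ covariance cross-terms are $\mathcal{O}(1/n)$ (via $\V A_{ij} = \Theta(\pi_i\pi_j)$, $\V d_i = \Theta(\pi_i\left\|\bm{\pi}\right\|_1)$ and Assumption \ref{nonodeincontrol}), then verify a Lyapunov bound with the inhomogeneous weights $c_{kl}$. Your route buys a more economical argument — one application of Lindeberg--Feller, no diagonal-matrix or remainder-vector terms — at the cost of the explicit variance and third-moment computation with pair-dependent coefficients; the paper's route trades that computation for structural independence. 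One small caveat, which applies equally to the paper's own univariate proof: the Lyapunov condition needs the \emph{absolute} third central moment $\E\left|A_{ij}-\E A_{ij}\right|^3$, whereas Assumption \ref{skewed} as stated controls only $\E\left[\left(A_{ij}-\E A_{ij}\right)^3\right]$; you, like the authors, implicitly read the assumption in the absolute sense, so this is not a gap relative to the paper.
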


\begin{proof}
		This proof is the multidimensional equivalent of the proof of Theorem~\ref{App:GeneralCLTPoisson}. It is analogously driven by the fact that the vector
		\begin{align*}
		\bm{m_1} = \left(\frac{d_1 - \E d_1}{\sqrt{\V d_1 }}, \ldots,  \frac{d_r - \E d_r}{\sqrt{\V d_r}}\right)'
		\end{align*}
		can be reduced to a sum of independent but not identically distributed random vectors. These in turn converge in distribution to a multivariate standard $\No$ random vector; as we now show. In direct analogy to the univariate case of Eq.\ \eqref{App:splitForCLT},
		\begin{align}
		& \sqrt{\E \left\|\bm{d}\right\|_1} \left(\frac{\hat{\pi}_1-\pi_1}{\sqrt{\V d_1}}, \ldots,  \frac{\hat{\pi}_r-\pi_r}{\sqrt{\V d_r}}\right)' \notag
\\		 &= \sqrt{\E \left\|\bm{d}\right\|_1} \left(\frac{1}{\sqrt{\V d_1}}\left( \frac{d_1}{\sqrt{\left\|\bm{d}\right\|_1}}-\pi_1\right), \ldots,  \frac{1}{\sqrt{\V d_r}}\left( \frac{d_r}{\sqrt{\left\|\bm{d}\right\|_1}}-\pi_r\right)\right)' \notag
		\\
		\begin{split} \label{12102015a}
		& =\underbrace{\sqrt{\frac{ \E \left\|\bm{d}\right\|_1}{\left\|\bm{d}\right\|_1}}}_{m_3} \cdot
		 \Biggl(\underbrace{
							 \left(\frac{d_1 - \E d_1}{\sqrt{\V d_1 }}, \ldots,
							 			 \frac{d_r - \E d_r}{\sqrt{\V d_r}}\right)'}_{\bm{m_1}} \\
		& \qquad							 			
							  +\underbrace{\left( \frac{\E d_1 - \pi_1 \sqrt{\left\|\bm{d}\right\|_1}}{\sqrt{\V d_1 }}
							  , \ldots,
							   \frac{\E d_r - \pi_r\sqrt{\left\|\bm{d}\right\|_1}}{\sqrt{\V d_r}}
							 \right)'}_{\bm{m_2}}\Biggr).	
		\end{split}					 			
		\end{align}
		Each component of the vector $\bm{m_2}$ converges in probability to 0 (see Eq.\ \eqref{proofTermT2} in the proof of Theorem~\ref{App:GeneralCLTPoisson}). It follows that the vector $\bm{m_2} \stackrel{P}{\rightarrow} \bm{0}$. In addition, the scalar $m_3$ converges in probability to 1 (see Eq.\ \eqref{proofTermT3} in the proof of Theorem~\ref{App:GeneralCLTPoisson}).
		
		We now prove that $\bm{m_1}\stackrel{d}{\rightarrow} \No\left(\bm{0}, \bm{I_r}\right)$. In order to apply a multivariate central limit theorem, we rearrange $\bm{m_1}$ such that we extract a sum of independent random vectors ($\bm{m_{12}}$):
			\begin{align}
	\notag			\bm{m_1}
		&=	 \left(\frac{d_1 - \E d_1}{\sqrt{\V d_1 }}, \ldots,
							 			 \frac{d_r - \E d_r}{\sqrt{\V d_r}}\right)'
 		\\ \begin{split}	\label{12102015b}	  &=\underbrace{\diag \left(\frac{\sqrt{\V(\sum_{l= r+1}^n A_{l1} )}}{\sqrt{\V d_1 }}, \ldots,
								          \frac{\sqrt{\V(\sum_{l=r+1}^n A_{lr} )}}{\sqrt{\V d_r }}\right) }_{\bm{D_{11}}}\\
			  	& \quad \cdot \underbrace{
			  	 \left(
						 \frac{\sum_{l =r+1}^n \left(A_{l1} - \E A_{l1}\right)}{\sqrt{\V(\sum_{l=r+1}^n A_{l1} ) }}, \ldots,
				 		 \frac{\sum_{l =r+1}^n \left(A_{lr} -\E A_{lr}\right)}{\sqrt{\V(\sum_{l=r+1}^n A_{lr} ) }}
			  	\right)'
			  	 }_{\bm{m_{12}}}
\\		  	& \quad
+\underbrace{
						\left(
						 \frac{\sum_{l =1}^r \left(A_{l1} - \E A_{l1}\right)}{\sqrt{\V d_1 }}, \ldots,
				 		 \frac{\sum_{l=1}^r  \left(A_{lr} -\E A_{lr}\right)}{\sqrt{\V d_r }}
			  	\right)'
			  	}_{\bm{m_{13}}}.
			 \end{split} 	
		\end{align}		
We will show three things: that the matrix $\bm{D_{11}}$ converges to the identity matrix $\bm{I_{r}}$; that $\bm{m_{12}} \stackrel{d}{\rightarrow}\No(\bm{0}, \bm{I_{r}})$; and that the term $\bm{m_{13}} \stackrel{P}{\rightarrow}\bm{0}$.

For the term $\bm{D_{11}}$, it holds for all $i$ that
\begin{align*}
	\sqrt{\frac{\V(\sum_{l= r+1}^n A_{li} )}{\V d_i }}
	& = \sqrt{1 - \frac{\V(\sum_{l= 1}^r A_{li} )}{\V \left(\sum_{l=1}^n A_{li}\right)}}.
\end{align*}
Furthermore, from Assumption~\ref{over-dispersed} $\left(\V A_{ij}=\Theta\left(\E A_{ij}\right)\right)$ we conclude  for all $i$  that
\begin{align*}
  \frac{\V(\sum_{l= 1}^r A_{li} )}{\V \left(\sum_{l=1}^n A_{li}\right)}
&  = \Theta \left(\frac{\sum_{l= 1}^r \E A_{li} }{\sum_{l=1}^n \E A_{li}}\right)
\\& = \Theta \left(\frac{\pi_i \sum_{l= 1}^r \pi_l }{\pi_i \left\|\bm{\pi}\right\|_1 }\right)
\\& = \Theta \left(\sum_{l= 1}^r \frac{ \pi_l }{\left\|\bm{\pi}\right\|_1 }\right).
\end{align*}
It follows further from Assumption~\ref{nonodeincontrol} that
\begin{align}
  \frac{\V(\sum_{l= 1}^r A_{li} )}{\V \left(\sum_{l=1}^n A_{li}\right)}
& = \mathcal{O} \left(\frac{r}{n}\right)
 \rightarrow 0. \label{07092015}
\end{align}

In turn, $\sqrt{\V(\sum_{l= r+1}^n A_{li} )}/\sqrt{\V d_i } \rightarrow 1$ for all $i$. Hence, the diagonal matrix $\bm{D_{11}}$ converges to the identity matrix $\bm{I_r}$ in the operator norm.

 The term  $\bm{m_{12}} \stackrel{d}{\rightarrow}\No(\bm{0}, \bm{I_{r}})$, as we will now show by applying  the Cram\'er--Wold theorem.
 The term $\bm{m_{12}}$ is a random vector depending on $n$, where each component is a sum of independent random variables. We will show now that, as a consequence, each component converges marginally in distribution to a $\No(0,1)$ random variable (by the same argument as in Theorem~\ref{App:GeneralCLTPoisson} for Term $T_1$). From Assumption~\ref{sparse} $\left(\Rightarrow  \E d_i \rightarrow \infty\right)$ and Assumption~\ref{over-dispersed} ($\V A_{ij}/ \E A_{ij} = \Theta(1)$), it follows that $\V d_i \rightarrow \infty$. Since in addition we assume the skewness of each edge $A_{ij}$ to be bounded asymptotically (Assumption~\ref{skewed}), the Lyapunov condition (for $\delta = 1$) is satisfied for each component. Hence, the Lindeberg--Feller central limit theorem lets us conclude that each component converges marginally in distribution to a $\No\left(0,1\right)$ random variable \cite[p.~362]{billingsley1995}.

 Furthermore, the components of $\bm{m_{12}}$ are independent. It follows that for each $\left(c_1, \ldots, c_r\right) \in \mathds{R}^r$ and $Y_u \stackrel{\text{iid}}{\sim} \No\left(0,1\right)$ for $u=1, \ldots, r$, it holds  that
		\begin{align*}
					\sum_{u=1}^r c_u  \frac{\sum_{l=r+1}^n \left(A_{lu} -\E A_{lu}\right)}{\sqrt{\V(\sum_{l=r+1}^n A_{lu} ) }} \quad \stackrel{d}{\rightarrow} \quad \sum_{u=1}^r c_u Y_u.
		\end{align*}		
	Applying the Cram\'er--Wold theorem, we conclude that $\bm{m_{12}} \stackrel{d}{\rightarrow}\No(\bm{0}, \bm{I_{r}})$.

Finally, term $\bm{m_{13}} \stackrel{P}{\rightarrow} \bm{0}$, since by Chebyshev's inequality
\begin{align*}
\frac{\sum_{l =1}^r \left(A_{li} - \E A_{li}\right)}{\sqrt{\V d_i }} = \mathcal{O}_P \left(\sqrt{\frac{  \V\left(\sum_{l =1}^r A_{li}\right)}{\V d_i }} \right),
\end{align*}
which in turn goes to 0 for all $i$, as seen in Eq.\ \eqref{07092015}.

 By Slutsky's theorem, we can combine the results on the convergence of $\bm{D_{11}}$, $\bm{m_{12}}$, and $\bm{m_{13}}$ to conclude (see Eq.\ \eqref{12102015b}) that
 \begin{align*}
 \bm{m_1}= \bm{D_{11}} \; \bm{m_{12}} + \bm{m_{13}}\stackrel{d}{\rightarrow} \No\left(\bm{0}, \bm{I_r}\right).
 \end{align*}
  In turn, we deduce the required result (see Eq.\ \eqref{12102015a}) that
  \begin{align*}
		\sqrt{\E \left\|\bm{d}\right\|_1} \left(\frac{\hat{\pi}_1-\pi_1}{\sqrt{\V d_1}}, \ldots,  \frac{\hat{\pi}_r-\pi_r}{\sqrt{\V d_r}}\right)' = m_3  \bm{m_1} + \bm{m_2} \stackrel{d}{\rightarrow} \No\left(\bm{0}, \bm{I_r}\right).
	\end{align*}
	
	To complete the proof we need to show consistency of the plug-in estimator of $\V d_i /\E \|\bm{d}\|_1$ for $A_{ij} \sim \operatorname{Bernoulli}\left(\pi_i \pi_j\right)$ and $A_{ij} \sim \operatorname{Poisson}\left(\pi_i \pi_j\right)$. We defer this to Theorem~\ref{ThmConsistency}, where we show a more general statement.
\end{proof}

\section{Proof of the Corollary of Theorem~2}
\label{Sectionstrength}

As a reminder to the reader, the Corollary in the main text is as follows.
\begin{corollary}[Central limit theorem for $\pij$] \label{CLTP}
Consider Assumptions~\ref{nonodeincontrol}--\ref{skewed}. Define the estimator $\pij=d_i d_j/ \left\|\bm{d}\right\|_1$ for $\E A_{ij}$. Then as $n \rightarrow \infty$, %
\begin{align*}								
		\frac{\pij-\E A_{ij}}{\sqrt{\left(\pi_j^2 \V d_i + \pi_i^2 \V d_j\right)/ \E \left\|\bm{d}\right\|_1}} \stackrel{d}{\rightarrow} \No(0,1).
\end{align*}								
Furthermore for all $i,j$, $\sqrt{ \left(\pi_j^2 \V d_i + \pi_i^2 \V d_j\right)/ \E \left\|\bm{d}\right\|_1}= \mathcal{O}\left(\sqrt{\E A_{ij}/n}\right)$, and can be consistently estimated using a plug-in estimator for $A_{ij} \sim \operatorname{Bernoulli}\left(\pi_i \pi_j\right)$ and $A_{ij} \sim \operatorname{Poisson}\left(\pi_i \pi_j\right)$.			
\end{corollary}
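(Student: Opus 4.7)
The plan is to exploit the algebraic identity
\begin{equation*}
\pij - \E A_{ij} \;=\; \hat{\pi}_i\hat{\pi}_j - \pi_i\pi_j \;=\; \pi_j(\hat{\pi}_i-\pi_i) + \pi_i(\hat{\pi}_j - \pi_j) + (\hat{\pi}_i - \pi_i)(\hat{\pi}_j - \pi_j),
\end{equation*}
so that the corollary reduces to a linear combination of the entries of the vector in the already-established multivariate CLT for the $\hat\pi$'s (Corollary~\ref{multiCLTp}), up to a cross-term which I will argue is negligible.

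First I would apply Corollary~\ref{multiCLTp} with $r=2$ to the pair of indices $(i,j)$, giving
\begin{equation*}
\sqrt{\E\|\bm{d}\|_1}\Bigl(\tfrac{\hat\pi_i-\pi_i}{\sqrt{\V d_i}},\tfrac{\hat\pi_j-\pi_j}{\sqrt{\V d_j}}\Bigr)' \stackrel{d}{\to}\No(\bm{0},\bm{I_2}).
\end{equation*}
Taking the linear combination with coefficients $\pi_j\sqrt{\V d_i}$ and $\pi_i\sqrt{\V d_j}$ (either by the Cram\'er--Wold device or directly since linear combinations of jointly normal limits are normal), and normalizing by $\sqrt{\pi_j^2\V d_i + \pi_i^2\V d_j}$, yields
\begin{equation*}
\frac{\pi_j(\hat\pi_i-\pi_i)+\pi_i(\hat\pi_j-\pi_j)}{\sqrt{(\pi_j^2\V d_i+\pi_i^2\V d_j)/\E\|\bm{d}\|_1}} \stackrel{d}{\to}\No(0,1).
\end{equation*}

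Next, I would show the cross-term $R_n=(\hat\pi_i-\pi_i)(\hat\pi_j-\pi_j)$ is $o_P$ of the same scaling. From the univariate CLTs for $\hat\pi_i$ and $\hat\pi_j$ we have $\hat\pi_i-\pi_i = \mathcal O_P(\sqrt{\V d_i/\E\|\bm{d}\|_1})$ and likewise for $j$, so $R_n = \mathcal O_P(\sqrt{\V d_i\V d_j}/\E\|\bm{d}\|_1)$. Using $\V d_i = \Theta(\E d_i) = \Theta(\pi_i\|\bm{\pi}\|_1)$ (Assumption~\ref{over-dispersed} and Eq.~\eqref{Exp_degree}), $\E\|\bm{d}\|_1 = \Theta(\|\bm{\pi}\|_1^2)$ (Eq.~\eqref{Exp_1norm_degree} together with Eq.~\eqref{twonormoveroneneormsqu}), and the AM--GM bound $\pi_j^2\V d_i + \pi_i^2\V d_j \ge 2\pi_i\pi_j\sqrt{\V d_i\V d_j}$, the ratio of $R_n$ to the scaling is $\mathcal O_P(1/(\|\bm{\pi}\|_1\sqrt{\pi_i\pi_j}))$; Assumptions~\ref{sparse} and~\ref{nonodeincontrol} give $\|\bm{\pi}\|_1=\omega(\sqrt n)$ and $\sqrt{\pi_i\pi_j}\ge \min_l\pi_l = \omega(1/\sqrt n)$, so the ratio is $o(1)$. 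An application of Slutsky's theorem then delivers the asymptotic standard normality claim.

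For the rate claim, I would substitute the same variance and norm estimates, obtaining $(\pi_j^2\V d_i+\pi_i^2\V d_j)/\E\|\bm{d}\|_1 = \Theta\bigl(\pi_i\pi_j(\pi_i+\pi_j)/\|\bm{\pi}\|_1\bigr)$, and then invoke Assumption~\ref{nonodeincontrol} in the form $\max_l\pi_l = \mathcal O(\|\bm{\pi}\|_1/n)$ to bound $\pi_i+\pi_j=\mathcal O(\|\bm{\pi}\|_1/n)$, yielding the desired $\mathcal O(\E A_{ij}/n)$. For consistency of the plug-in estimator under Bernoulli or Poisson edges, I would simply cite Theorem~\ref{ThmConsistency} as done in Theorem~\ref{App:GeneralCLTPoisson} and Corollary~\ref{multiCLTp}. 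The only place any real care is required is the cross-term bookkeeping above, where one must use all of Assumptions~\ref{nonodeincontrol}, \ref{sparse}, and \ref{over-dispersed} to verify that the quadratic remainder remains of smaller order than the linear part over the full sparsity range permitted by the model.
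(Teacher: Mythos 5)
Your proposal is correct and follows essentially the same route as the paper: the same decomposition of $\pij-\E A_{ij}$ into a linear term plus a cross term, Corollary~\ref{multiCLTp} with the Cram\'er--Wold device for the linear term, negligibility of the cross term, Slutsky's theorem, and the same rate and plug-in arguments (the paper controls the cross term by comparing it to the linear term via Lemma~\ref{15122015}, whereas you compare it to the deterministic scaling, an immaterial variation). One harmless slip: your AM--GM bookkeeping actually yields a ratio of order $1/\bigl((\pi_i\pi_j)^{1/4}\left\|\bm{\pi}\right\|_1^{1/2}\bigr)$ rather than $1/\bigl(\left\|\bm{\pi}\right\|_1\sqrt{\pi_i\pi_j}\bigr)$, but this still tends to zero under Assumption~\ref{sparse}, so the conclusion is unaffected.
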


\begin{proof}
We show that $\widehat{\E A}_{ij}=\hat \pi_i \hat \pi_j$, once appropriately standardized,  converges in distribution to a $\No(0,1)$ random variable. It can easily be seen that
\begin{align}
&\pij
= \pi_i \pi_j + \pi_j (\hat \pi_i - \pi_i) + \pi_i (\hat \pi_j - \pi_j) + (\hat \pi_i - \pi_i)(\hat \pi_j - \pi_j). \label{09082015b}
\end{align}
Under the hypothesis that $(\hat \pi_i - \pi_i)(\hat \pi_j - \pi_j)$ is asymptotically negligible, the asymptotic behavior of $\pij - \pi_i \pi_j$ will be dominated by $\pi_j (\hat \pi_i - \pi_i) + \pi_i (\hat \pi_j - \pi_j)$. As a consequence, we standardize all quantities in Eq.\ \eqref{09082015b} by the factor
$\sqrt{\E\left\|\bm{d}\right\|_1/\left(\pi_j^2 \V d_i + \pi_i^2 \V d_j\right)} $, which can be interpreted as an approximation of the standard deviation of $\pi_j (\hat \pi_i - \pi_i) + \pi_i (\hat \pi_j - \pi_j)$. Then, we can use Eq.~\eqref{09082015b} to write
\begin{align*}
& \sqrt{\E\left\|\bm{d}\right\|_1} \frac{\pij - \pi_i \pi_j}{\sqrt{\pi_j^2 \V d_i + \pi_i^2 \V d_j}}
\\ & = \underbrace{\sqrt{\frac{\E\left\|\bm{d}\right\|_1}{\pi_j^2 \V d_i + \pi_i^2 \V d_j}} \left[  \pi_j \sqrt{\V d_i} \left(\frac{\hat \pi_i - \pi_i}{\sqrt{\V d_i}}\right)
 + \pi_i \sqrt{\V d_j} \left(\frac{\hat \pi_j - \pi_j}{\sqrt{\V d_j}}\right) \right]}_{T_1}
\\ & \quad+ \underbrace{\sqrt{\frac{\E\left\|\bm{d}\right\|_1}{\pi_j^2 \V d_i + \pi_i^2 \V d_j}} \cdot (\hat \pi_i - \pi_i)(\hat \pi_j - \pi_j)}_{T_2} .
\end{align*}

To deduce the required result, we will show that $T_1 \stackrel{d}{\rightarrow} \No\left(0,1\right)$ and $T_2 = o_P\left(T_1\right)$. Slutsky's theorem will then enable us to combine these results and obtain the claimed convergence in distribution.

$\underline{\text{Term } T_1}$:
Recall from Corollary~\ref{multiCLTp} that under Assumptions~\ref{nonodeincontrol}--\ref{skewed} it holds that $\sqrt{\E\left\|\bm{d}\right\|_1} \left(\frac{\hat \pi_i - \pi_i}{\sqrt{\V d_i}},\frac{\hat \pi_j - \pi_j}{ \sqrt{\V d_j}}\right)' \stackrel{d}{\rightarrow}\No\left(\bm{0}, \bm{I_2}\right)$.
Applying the Cram\'er--Wold theorem
and Slutsky's theorem, we can conclude that
\begin{align*}
	T_1 \stackrel{d}{\rightarrow} \No(0,1).
\end{align*}

$\underline{\text{Term } T_2}$:
It remains to show that $T_2 = o_P\left(T_1\right)$; i.e., that
\begin{align*}
	(\hat \pi_i - \pi_i)(\hat \pi_j - \pi_j) = o\left(\pi_j (\hat \pi_i - \pi_i) + \pi_i (\hat \pi_j - \pi_j)\right).
\end{align*}

We now use Lemma \ref{15122015} that we will show immediately below.
\begin{align}
\frac{T_2}{T_1}
&= \frac{(\hat \pi_i - \pi_i)(\hat \pi_j - \pi_j) }{\pi_j (\hat \pi_i - \pi_i) + \pi_i (\hat \pi_j - \pi_j)} \notag
\\ &= \left[\frac{\pi_j (\hat \pi_i - \pi_i) + \pi_i (\hat \pi_j - \pi_j)}{(\hat \pi_i - \pi_i)(\hat \pi_j - \pi_j) }\right]^{-1} \notag
\\ &= \left[\frac{\pi_j}{\hat \pi_j - \pi_j } + \frac{\pi_i }{ \hat \pi_i - \pi_i}\right]^{-1} \notag
\\ &= \left[ \Omega \left(\sqrt{ \E d_i}\right) + \Omega \left(\sqrt{ \E d_j}\right)\right]^{-1}  \qquad \text{(see Lemma \ref{15122015})} \notag
\\ &= \mathcal{O}_P \left( \frac{1}{\sqrt{ \E d_i} + \sqrt{ \E d_j}}\right)  \label{11112015}
\end{align}

From Assumption~\ref{sparse} $\left(\pi_i = \omega\left(1/ \sqrt{n}\right)\right)$, it follows that $\min_i \E d_i $ diverges, and hence that $T_2/T_1\stackrel{P}{\rightarrow} 0$.

 \begin{appxlemma} \label{15122015}
 		Consider Assumptions~\ref{nonodeincontrol},~\ref{sparse} and \ref{over-dispersed}. Then,
 		\begin{align*}
 		\hat \pi_i - \pi_i
			 &= \mathcal{O}_P\left( \frac{ \pi_i }{ \sqrt{ \E d_i }} \right).
\end{align*}
 \end{appxlemma}

 \begin{proof}
 First, we appeal to a Taylor expansion in probability of $\hat{\pi}_i= d_i / \sqrt{\|\bm{d}\|_1}$. Let $A = d_i/\E d_i$ and $B=\left(\| \bm{d} \|_1 - 2 d_i\right)/ \E \left(\| \bm{d} \|_1 - 2 d_i\right)$. Observe that the function
 \begin{align} \label{10112015c}
 		\hat{\pi}_i=f(A , B)=\frac{\E d_i \; A}{\sqrt{2 \E d_i \; A + \E \left(\| \bm{d} \|_1 - 2 d_i\right) \; B }}
 \end{align}		
 has continuous partial derivatives at $(1,1)'$. A Taylor expansion in probability \cite[p.~201]{brockwell1991time} of $f$ requires in addition that $ \sqrt{(A - 1)^2 + (B - 1)^2} \stackrel{P}{\rightarrow} 0 $. By Chebyshev's inequality, we know that
\begin{align*}
	\sqrt{(A - 1)^2 + (B - 1)^2} & = \sqrt{\left(\frac{d_i}{\E d_i} - 1\right)^2 + \left(\frac{\| \bm{d} \|_1 - 2 d_i}{ \E \left(\| \bm{d} \|_1 - 2 d_i\right)} - 1\right)^2}
	\\ & = \sqrt{\mathcal{O}_p\left[\V \left(\frac{d_i}{\E d_i}\right)\right] + \mathcal{O}_p\left[\V \left(\frac{\| \bm{d} \|_1 - 2 d_i}{ \E \left(\| \bm{d} \|_1 - 2 d_i\right)}\right)\right]}
		\\ & = \sqrt{\mathcal{O}_p\left[ \frac{\V d_i}{\left(\E d_i\right)^2}\right] + \mathcal{O}_p\left[ \frac{\V \left(\| \bm{d} \|_1 - 2 d_i \right)}{ \left(\E \left(\| \bm{d} \|_1 - 2 d_i\right) \right)^2}\right]}.
\end{align*}
From Assumptions~\ref{sparse} and~\ref{over-dispersed} ($\Rightarrow  \E d_i \rightarrow \infty$, $\V A_{ij}/ \E A_{ij} = \Theta(1)$), it follows that $\sqrt{(A - 1)^2 + (B - 1)^2}  \stackrel{P}{\rightarrow} 0$.

We now can expand the function $f(A,B)$ in Eq.\ \eqref{10112015c} in a convergent Taylor series around $(1,1)'$. In combination with Assumptions~\ref{sparse} and~\ref{over-dispersed} we obtain
\begin{align}
\frac{d_i}{ \sqrt{ \| \bm{d} \|_1 }}
& = \frac{ \E d_i }{ \sqrt{ \E \| \bm{d} \|_1 } } \left[ 1 + \mathcal{O}_P\left( \frac{ 1 }{ \sqrt{ \E d_i } } \right) \right]. \label{12102015d}
\end{align}
Furthermore, we conclude that
\begin{align}
\frac{ \E d_i }{ \sqrt{ \E \| \bm{d} \|_1 } }
&= \frac{ \pi_i \left( 1 - \pi_i / \| \bm{\pi} \|_1 \right) }{ \sqrt{ 1 - \| \bm{\pi} \|_2^2 / \| \bm{\pi} \|_1^2 } }
\\&=  \pi_i \left[ 1 +\mathcal{O}\left(\frac{1}{n}\right)\right] \left[ 1 - \frac{\| \bm{\pi} \|_2^2 }{ \| \bm{\pi} \|_1^2 }\right]^{-1/2} \notag\qquad \text{(Assumption~\ref{nonodeincontrol})}
\\ & = \pi_i \left[ 1 + \mathcal{O}\left(\frac{1}{n}\right)\right]\left[ 1 + \mathcal{O}\left( \frac{ \| \bm{\pi} \|_2^2 }{ \| \bm{\pi} \|_1 }\right)\right] \qquad \text{(Taylor expansion)} \notag
\\  & = \pi_i \left[ 1 + \mathcal{O}\left( \frac{ 1 }{n} \right) \right] . \qquad \text{(see Eq.\ \eqref{twonormoveroneneormsqu})} \label{12102015c}
\end{align}
Combining Eqs.\ \eqref{12102015d} and~\eqref{12102015c}, it follows that
\begin{align*}
\hat \pi_i=\frac{d_i}{ \sqrt{ \| \bm{d} \|_1 }}
  = \pi_i \left[ 1 + \mathcal{O}_P\left( \frac{ 1 }{ \sqrt{ \E d_i } } \right) \right].
\end{align*}
We conclude immediately the result of Lemma~\ref{15122015}; i.e.,
\begin{align}
\hat \pi_i - \pi_i
&= \mathcal{O}_P\left( \frac{ \pi_i }{ \sqrt{ \E d_i } } \right).  \label{12102015e}
\end{align}
\end{proof}

Having established the claimed central limit theorem, we now show that $\sqrt{ \left(\pi_j^2 \V d_i + \pi_i^2 \V d_j\right)/ \E \left\|\bm{d}\right\|_1}= \mathcal{O}\left(\sqrt{\E A_{ij}/n}\right) = \mathcal{O}\left(\sqrt{ \pi_i \pi_j /n}\right)$:
\begin{align*}
	&\sqrt{\frac{n}{\pi_i \pi_j} \cdot \frac{\pi_i^2 \V d_j + \pi_j^2 \V d_i}{\E \|\bm{d}\|_1}}
\\&= \Theta \left(\sqrt{\frac{n}{\pi_i \pi_j} \cdot \frac{\pi_i^2 \E d_j + \pi_j^2 \E d_i}{\E \|\bm{d}\|_1}}\right) \quad \text{(Assumption~\ref{over-dispersed})}
\\&= \Theta \left(\sqrt{\frac{n}{\pi_i \pi_j} \cdot \frac{\pi_i^2 \pi_j + \pi_j^2 \pi_i}{ \|\bm{\pi}\|_1}}\right) \quad \text{(Assumption~\ref{nonodeincontrol})}
\\&= \Theta \left(\sqrt{n \cdot \frac{\pi_i  + \pi_j}{ \|\bm{\pi}\|_1}}\right)
\\&= \mathcal{O} \left(1\right). \quad \text{(Assumption~\ref{nonodeincontrol})}
\end{align*}

To complete the proof of the Corollary, we need to show consistency of the plug-in estimator of $\sqrt{ n \left(\pi_j^2 \V d_i + \pi_i^2 \V d_j\right)/ \sum_{l=1}^n \E d_l}$ for networks with edges $A_{ij} \sim \operatorname{Bernoulli}\left(\pi_i \pi_j\right)$ or $A_{ij} \sim \operatorname{Poisson}\left(\pi_i \pi_j\right)$. We defer this to Theorem~\ref{ThmConsistency}, where we show a more general statement.
\end{proof}

Recall that modularity $\widehat{Q}$ (Eq.~[1] in main text) is an empirical quantity that  estimates its population counterpart $Q$ (Eq.~[2] in main text), in the sense that $\E A_{ij}$ is estimated using $\pij$.  For each individual $\pij$, we show now that $\pij - \E A_{ij} \stackrel{P}{\rightarrow} 0$ at a rate no slower than $\left(\pi_i + \pi_j\right)/\sqrt{n}$ (Assumption~\ref{notcomplete}). More precisely we have the following.
\begin{appxlemma}
 		Consider Assumptions \ref{nonodeincontrol}, \ref{sparse}, and \ref{over-dispersed}. Then,
 		\begin{align*}
 	\pij - \E A_{ij} &= \mathcal{O}_P\left(\frac{\pi_i + \pi_j}{\sqrt{n}}\right).
\end{align*}
From Assumption \ref{notcomplete}, we know that $\left(\pi_i + \pi_j\right)/\sqrt{n}=o_P\left(1\right)$.
 \end{appxlemma}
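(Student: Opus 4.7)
The plan is to combine Lemma~\ref{15122015} (which controls each factor $\hat\pi_i - \pi_i$ individually) with a simple algebraic expansion of the product $\hat\pi_i\hat\pi_j - \pi_i\pi_j$. The identity
\begin{align*}
\pij - \E A_{ij} &= \hat\pi_i \hat\pi_j - \pi_i \pi_j \\
&= \pi_j(\hat\pi_i - \pi_i) + \pi_i(\hat\pi_j - \pi_j) + (\hat\pi_i - \pi_i)(\hat\pi_j - \pi_j)
\end{align*}
decomposes the problem into two linear terms and one quadratic cross term, which I would bound in turn.

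For each linear term I would apply Lemma~\ref{15122015} to write $\pi_j(\hat\pi_i - \pi_i) = \mathcal{O}_P(\pi_i \pi_j / \sqrt{\E d_i})$, and then use $\E d_i = \pi_i(\|\bm{\pi}\|_1 - \pi_i)$ together with Assumption~\ref{nonodeincontrol} (which gives $\pi_i/\|\bm{\pi}\|_1 = \mathcal{O}(1/n)$ and hence $\E d_i = \Theta(\pi_i \|\bm{\pi}\|_1)$) to reduce the bound to
\begin{equation*}
\pi_j \sqrt{\pi_i/\|\bm{\pi}\|_1} = \mathcal{O}\left(\pi_j/\sqrt{n}\right).
\end{equation*}
The symmetric argument yields $\pi_i(\hat\pi_j - \pi_j) = \mathcal{O}_P(\pi_i/\sqrt{n})$, and summing these two already gives the claimed $\mathcal{O}_P((\pi_i + \pi_j)/\sqrt{n})$ contribution.

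For the cross term I would apply Lemma~\ref{15122015} twice to get $(\hat\pi_i - \pi_i)(\hat\pi_j - \pi_j) = \mathcal{O}_P(\pi_i\pi_j/\sqrt{\E d_i \E d_j}) = \mathcal{O}_P(\sqrt{\pi_i\pi_j}/\|\bm{\pi}\|_1)$, again by Assumption~\ref{nonodeincontrol}. AM-GM bounds this by $(\pi_i + \pi_j)/(2 n \bar\pi)$, which Assumption~\ref{sparse} ($\min_i \pi_i = \omega(1/\sqrt{n})$, hence $\bar\pi = \omega(1/\sqrt{n})$) shows is of \emph{strictly smaller} order than $(\pi_i+\pi_j)/\sqrt{n}$. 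Combining the three bounds then delivers the desired rate.

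I do not expect any serious obstacle: all of the probabilistic heavy lifting has already been done in Lemma~\ref{15122015}, and the remaining work is essentially a careful bookkeeping exercise in which Assumption~\ref{nonodeincontrol} is used to convert $\sqrt{\E d_i}$ into $\sqrt{n}$-scale and Assumption~\ref{sparse} is used only to verify that the cross term is negligible relative to the linear ones. The mild subtlety worth double-checking is that Lemma~\ref{15122015} is invoked under exactly the hypotheses available here (Assumptions~\ref{nonodeincontrol}, \ref{sparse}, and \ref{over-dispersed}), so no additional assumptions are smuggled in.
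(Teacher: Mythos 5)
Your proposal is correct and follows essentially the same route as the paper: the identical expansion of $\hat\pi_i\hat\pi_j-\pi_i\pi_j$ into two linear terms plus a cross term, Lemma~\ref{15122015} to bound the linear terms, and Assumption~\ref{nonodeincontrol} to convert the $\sqrt{\E d_i}$ scale into $\sqrt{n}$. The only (harmless) difference is that you bound the cross term directly by applying Lemma~\ref{15122015} twice together with AM--GM and Assumption~\ref{sparse}, whereas the paper absorbs it multiplicatively via the relative-error bound of Eq.~\eqref{11112015}; both show it is of strictly smaller order than the linear contribution.
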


\begin{proof}
Recall from Eq.\ \eqref{09082015b} that
\begin{align*}
\pij - \E A_{ij}
&=   \pi_j (\hat \pi_i - \pi_i) + \pi_i (\hat \pi_j - \pi_j) + (\hat \pi_i - \pi_i)(\hat \pi_j - \pi_j).
\intertext{Furthermore, we know from Eq.\ \eqref{11112015} that}
&=   \left(\pi_j (\hat \pi_i - \pi_i) + \pi_i (\hat \pi_j - \pi_j)\right)
\left[1+ \mathcal{O}_P \left( \frac{1}{\sqrt{ \E d_i} + \sqrt{ \E d_j}}\right) \right]
\intertext{From Lemma \ref{15122015} and Assumptions \ref{nonodeincontrol}, \ref{sparse} and \ref{over-dispersed}, it follows that}
&=   \mathcal{O}_P \left( \frac{\pi_i \pi_j}{\sqrt{\E d_i}} + \frac{\pi_i \pi_j}{\sqrt{\E d_j}}\right)
\\&=  \mathcal{O}_P \left( \sqrt{\frac{\pi_i}{\left\|\bm{\pi}\right\|_1}} \pi_j + \sqrt{\frac{\pi_j}{\left\|\bm{\pi}\right\|_1}} \pi_i\right) \quad \text{(Assumption \ref{nonodeincontrol})}
\\&=  \mathcal{O}_P \left( \frac{\pi_j}{\sqrt{n}} + \frac{\pi_i}{\sqrt{n}} \right) \quad \text{(Assumption \ref{nonodeincontrol})}
\\&=  o_P \left(1\right) \quad \text{(Assumption \ref{notcomplete})}.
\end{align*}
\end{proof}

\section{Consistency of the plug-in estimator for \texorpdfstring{\newline $\sqrt{\V d_i/ \E \| \bm{d} \|_1}$}{sqrt(Var di/E||d||1)}} \label{SecConsistency}

Throughout the Theorem and Corollaries in the main text (and above), we state that $\sqrt{\V d_i/ \E \| \bm{d} \|_1}$ can be consistently estimated using a plug-in estimator for $A_{ij} \sim \operatorname{Bernoulli}\left(\pi_i \pi_j\right)$ and $A_{ij} \sim \operatorname{Poisson}\left(\pi_i \pi_j\right)$. In fact, this is true more generally, as we show below.

Each edge distribution leads to a different variance $\V d_i$, each of which is $\Theta\left( \E d_i \right)$ by Assumption \ref{over-dispersed}. We now show that the term $\sqrt{\V d_i/ \E \| \bm{d} \|_1}$ can be consistently estimated by a plug-in estimator, as long as $\V d_i$ can be consistently estimated by a plug-in estimator.  More precisely, we have the following.

\begin{appxtheorem}[Consistency of plug-in estimator for {$\sqrt{\V d_i/ \E \| \bm{d} \|_1}$}] \label{ThmConsistency}
 	Consider Assumptions \ref{nonodeincontrol}, \ref{sparse} and \ref{over-dispersed}.  Define plug-in estimators $\widehat{\V d_i}$ and $\widehat{\E \| \bm{d} \|_1}$ by exchanging each $\pi_i$ in $\V d_i$ and $\E \| \bm{d} \|_1$ by $\hat{\pi}_i=d_i/\sqrt{\| \bm{d} \|_1}$. In addition, assume that
\begin{equation*}
\frac{ \widehat{\V d}_i }{ \V d_i } \stackrel{P}{\rightarrow} 1.
\end{equation*}
 	 Then,  $\sqrt{\V d_i/ \E \| \bm{d} \|_1}$ can be estimated consistently using the plug-in estimator $\sqrt{\widehat{\V d_i} / \widehat{\E \| \bm{d} \|_1}}$; i.e.,
\begin{align*}
 \frac{\sqrt{\widehat{\V d_i}/ \widehat{\E \| \bm{d} \|_1}}}{\sqrt{\V d_i/ \E \| \bm{d} \|_1}} \stackrel{P}{\rightarrow} 1.
\end{align*}	
\end{appxtheorem}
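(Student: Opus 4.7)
The plan is to reduce the claim to two simpler pieces via the continuous mapping theorem.  Writing
\begin{equation*}
\frac{\sqrt{\widehat{\V d}_i/ \widehat{\E \| \bm{d} \|_1}}}{\sqrt{\V d_i/ \E \| \bm{d} \|_1}} = \sqrt{\frac{\widehat{\V d}_i}{\V d_i} \cdot \frac{\E \| \bm{d} \|_1}{\widehat{\E \| \bm{d} \|_1}}},
\end{equation*}
and using the continuity of $\sqrt{\cdot}$ at $1$, it suffices to show that the product under the root converges in probability to $1$.  The first factor $\widehat{\V d}_i/\V d_i$ converges to $1$ in probability by the hypothesis of the theorem, so the real content is the convergence $\widehat{\E \| \bm{d} \|_1}/\E \| \bm{d} \|_1 \stackrel{P}{\rightarrow} 1$.

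To handle this, I would first substitute the explicit form $\hat{\pi}_i = d_i/\sqrt{\|\bm{d}\|_1}$ into Eq.~\eqref{Exp_1norm_degree}, $\E \| \bm{d} \|_1 = \| \bm{\pi} \|_1^2 - \| \bm{\pi} \|_2^2$.  Since $\sum_i \hat{\pi}_i = \sqrt{\| \bm{d} \|_1}$ and $\sum_i \hat{\pi}_i^2 = \| \bm{d} \|_2^2/\| \bm{d} \|_1$, this yields the clean identity
\begin{equation*}
\widehat{\E \| \bm{d} \|_1} = \| \bm{d} \|_1 - \frac{\| \bm{d} \|_2^2}{\| \bm{d} \|_1}.
\end{equation*}
Dividing through by $\E \| \bm{d} \|_1$, the leading piece $\|\bm{d}\|_1/\E \|\bm{d}\|_1$ is already known to converge in probability to $1$ by Eq.~\eqref{normd1concentrates}, so the argument reduces one last time to showing that the correction $\|\bm{d}\|_2^2/(\|\bm{d}\|_1 \cdot \E \|\bm{d}\|_1)$ is $o_P(1)$.

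For this residual step, I would bound $\|\bm{d}\|_2^2$ in probability via Markov's inequality applied to its first moment.  Using Eqs.~\eqref{Exp_degree} and~\eqref{Var_degree} together with Assumption~\ref{over-dispersed} ($\V A_{ij}/\E A_{ij} = \Theta(1)$),
\begin{equation*}
\E \| \bm{d} \|_2^2 = \sum_i \left[(\E d_i)^2 + \V d_i\right] = \Theta\!\left( \| \bm{\pi} \|_1^2 \| \bm{\pi} \|_2^2 + \| \bm{\pi} \|_1^2 \right).
\end{equation*}
Dividing by $(\E \| \bm{d} \|_1)^2 = \Theta(\| \bm{\pi} \|_1^4)$ and invoking Eq.~\eqref{twonormoveroneneormsqu} (the quantitative version of Assumption~\ref{nonodeincontrol} that gives $\| \bm{\pi} \|_2^2/\| \bm{\pi} \|_1^2 = \mathcal{O}(1/n)$), the ratio $\E \| \bm{d} \|_2^2 / (\E \| \bm{d} \|_1)^2$ tends to $0$.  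Markov then gives $\| \bm{d} \|_2^2 / (\E \| \bm{d} \|_1)^2 = o_P(1)$, and combining with $\| \bm{d} \|_1/\E \|\bm{d}\|_1 \stackrel{P}{\rightarrow} 1$ via Slutsky closes the correction-term estimate.

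The step I expect to be the main obstacle is this last one: ensuring that the moment bound on $\| \bm{d} \|_2^2$ really is small enough relative to $(\E \| \bm{d} \|_1)^2$ to survive the extra division by $\| \bm{d} \|_1$ in the correction term.  The calculation above shows that this turns entirely on the rate $\| \bm{\pi} \|_2^2 = \mathcal{O}(\| \bm{\pi} \|_1^2/n)$ supplied by Assumption~\ref{nonodeincontrol}; any weaker control of $\max_i \pi_i$ relative to $\bar{\pi}$ would require a substantially more delicate analysis of $\| \bm{d} \|_2^2$.  Given the assumptions in force, however, the approach goes through cleanly, and a final application of the continuous mapping theorem to $\sqrt{\cdot}$ delivers the stated consistency.
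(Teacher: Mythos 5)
Your proposal is correct and follows essentially the same route as the paper: both rest on the identity $\widehat{\E \| \bm{d} \|_1} = \|\bm{d}\|_1 - \|\bm{d}\|_2^2/\|\bm{d}\|_1$, the concentration $\|\bm{d}\|_1/\E\|\bm{d}\|_1 \stackrel{P}{\rightarrow} 1$, and the bound $\|\bm{\pi}\|_2^2/\|\bm{\pi}\|_1^2 = \mathcal{O}(1/n)$ from Assumption \ref{nonodeincontrol} to make the $\|\bm{d}\|_2^2$ correction negligible. The only cosmetic difference is that you dispose of $\|\bm{d}\|_2^2$ with a first-moment Markov bound, while the paper writes the corresponding factor as $\left[1-\|\bm{d}\|_2^2/\|\bm{d}\|_1^2\right]^{-1/2}$ and uses Chebyshev-type concentration of $\|\bm{d}\|_2^2$ and $\|\bm{d}\|_1^2$ about their means, with the same conclusion.
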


\begin{proof}
We first write
\begin{align}
\frac{\sqrt{\widehat{\V d_i}/ \widehat{\E \| \bm{d} \|_1}}}{\sqrt{\V d_i/ \E \| \bm{d} \|_1}} & = \sqrt{\frac{\widehat{\V d}_i}{\V d_i}}
		 \sqrt{\frac{\E \left\|\bm{d}\right\|_1}{\| \hat{\bm{\pi}} \|_1^2 - \| \hat{\bm{\pi}} \|_2^2}} \notag
\\& = \sqrt{\frac{\widehat{\V d}_i}{\V d_i}}
		 \sqrt{\frac{\E \left\|\bm{d}\right\|_1}{\| \bm{d} \|_1^2 - \| \bm{d} \|_2^2} \left\|\bm{d}\right\|_1} \notag
\\ &		 = \sqrt{\frac{\widehat{\V d}_i}{\V d_i}}  \sqrt{\frac{\E \left\|\bm{d}\right\|_1}{\left\|\bm{d}\right\|_1}} \left[1-\frac{\left\|\bm{d}\right\|_2^2}{\left\|\bm{d}\right\|_1^2}\right]^{-\frac{1}{2}}. \label{12102015i}
\end{align}

From term $T_3$ (Eq.\ \eqref{proofTermT3}) in the proof of Theorem \ref{App:GeneralCLTPoisson}, we know that under Assumption \ref{over-dispersed} $\left(\V A_{ij} = \Theta\left(\E A_{ij}\right)\right)$ it holds that $\sqrt{\E \left\|\bm{d}\right\|_1/\left\|\bm{d}\right\|_1} \stackrel{P}{\rightarrow} 1$ . Since we assume $\widehat{\V d}_i/ \V d_i \stackrel{P}{\rightarrow} 1$, it remains to show that
\begin{align*}
\frac{\left\|\bm{d}\right\|_2^2}{\left\|\bm{d}\right\|_1^2} \stackrel{P}{\rightarrow} 0.
\end{align*}

First, from Chebyshev's inequality, and from Assumption \ref{over-dispersed}, we know that
\begin{align}
\frac{\left\|\bm{d}\right\|_2^2}{\E \left\|\bm{d}\right\|_2^2} = 1 + \mathcal{O}_P\left(\frac{1}{\sqrt{\E \left\|\bm{d}\right\|_2^2}}\right)
\quad \text{and} \quad
\frac{\left\|\bm{d}\right\|_1^2}{\E \left\|\bm{d}\right\|_1^2} = 1 + \mathcal{O}_P\left(\frac{1}{\sqrt{\E \left\|\bm{d}\right\|_1^2}}\right). \label{27102015a}
\end{align}
In return, it follows that
\begin{align}
\frac{\left\|\bm{d}\right\|_2^2}{\left\|\bm{d}\right\|_1^2}
&= \frac{\E \left\|\bm{d}\right\|_2^2}{\E \left\|\bm{d}\right\|_1^2} \left[1 + \mathcal{O}_P\left(\frac{1}{\sqrt{\E \left\|\bm{d}\right\|_2^2}}\right)\right] \left[1 + \mathcal{O}_P\left(\frac{1}{\sqrt{\E \left\|\bm{d}\right\|_1^2}}\right)\right]^{-1}. \notag
\intertext{We may apply a convergent Taylor expansion of $f(x) = (1 + x)^{-1}$ at $1$, since $x=1/\sqrt{\E \left\|\bm{d}\right\|_1^2} = o(1)$. It follows that}
&= \frac{\E \left\|\bm{d}\right\|_2^2}{\E \left\|\bm{d}\right\|_1^2} \left[1 + \mathcal{O}_P\left(\frac{1}{\sqrt{\E \left\|\bm{d}\right\|_2^2}}\right)\right] \left[1 + \mathcal{O}_P\left(\frac{1}{\sqrt{\E \left\|\bm{d}\right\|_1^2}}\right)\right]  \notag
\\&= \frac{\E \left\|\bm{d}\right\|_2^2}{\E \left\|\bm{d}\right\|_1^2} \left[1 + \mathcal{O}_P\left(\frac{1}{\sqrt{\E \left\|\bm{d}\right\|_2^2}}\right)\right].  \qquad \left(\text{since } \left\|\bm{d}\right\|_2^2 \leq \left\|\bm{d}\right\|_1^2  \right)\label{12102015g}
\end{align}
 Via straightforward algebraic computations, we obtain
 \begin{align}
 	\E \left\|\bm{d}\right\|_2^2&= \sum_i \sum_{j \neq i} \sum_{l \neq i} \E \left(A_{ij} A_{il}\right) \notag
\\ 	&= \sum_i  \E d_i \E d_i \cdot \left(1+ o(1)\right) \notag
\\ 	&=  \left\|\bm{\pi}\right\|_1^2 \left\|\bm{\pi}\right\|_2^2 \cdot \left(1+ o(1)\right), \qquad \text{(Assumption \ref{nonodeincontrol})} \label{12102015f}
 \intertext{and}
 	\E \left\|\bm{d}\right\|_1^2&= \V \left\|\bm{d}\right\|_1 + \left(\E \left\|\bm{d}\right\|_1\right)^2 \label{29102015b}
 	\\ &= \Theta \left(\E \left\|\bm{d}\right\|_1\right) + \left(\E \left\|\bm{d}\right\|_1\right)^2 \quad \text{(Assumption \ref{over-dispersed})} \notag
\\ & = \Theta \left[\left(\E \left\|\bm{d}\right\|_1\right)^2\right]. \label{29102015}
\quad \text{(Assumption \ref{sparse})}
 	\end{align}
We know from Eq.\ \eqref{12102015g} that
 \begin{align} \notag
\frac{\left\|\bm{d}\right\|_2^2}{\left\|\bm{d}\right\|_1^2}
&= \frac{\E \left\|\bm{d}\right\|_2^2}{\E \left\|\bm{d}\right\|_1^2} \left[1 + \mathcal{O}_P\left(\frac{1}{\sqrt{\E \left\|\bm{d}\right\|_2^2}}\right)\right].
\intertext{Combining Eqs.\ \eqref{12102015f} and~\eqref{29102015} and applying Assumption \ref{nonodeincontrol}, it then follows that}
&= \frac{\left\|\bm{\pi}\right\|_2^2}{\left\|\bm{\pi}\right\|_1^2} \left[1 + \mathcal{O}_P\left(\frac{1}{\left\|\bm{\pi}\right\|_1 \left\|\bm{\pi}\right\|_2}\right)\right] \notag
\\&= \mathcal{O}_P\left(\frac{1}{ n}\right). \qquad \text{(see Eq.\ \eqref{twonormoveroneneormsqu})} \notag
\end{align}

Finally, we know from Eq.\ \eqref{12102015i} that
 \begin{align*}
\frac{\sqrt{\widehat{\V d_i}/ \widehat{\E \| \bm{d} \|_1}}}{\sqrt{\V d_i/ \E \| \bm{d} \|_1}}
		 = \sqrt{\frac{\widehat{\V d}_i}{\V d_i}}  \sqrt{\frac{\E \left\|\bm{d}\right\|_1}{\left\|\bm{d}\right\|_1}} \left[1-\frac{\left\|\bm{d}\right\|_2^2}{\left\|\bm{d}\right\|_1^2}\right]^{-\frac{1}{2}}.
\end{align*}
The inverse of a random variable which converges in probability to a constant $c$ must in turn converge to $1/c$, as long as $c \neq 0$ \cite[Theorem 2.1.3]{Lehmann199812}. Applying this fact and the continuous mapping theorem, we obtain the claimed convergence in probability; i.e.,
  \begin{align*}
\frac{\sqrt{\widehat{\V d_i}/ \widehat{\E \| \bm{d} \|_1}}}{\sqrt{\V d_i/ \E \| \bm{d} \|_1}}
		  \stackrel{P}{\rightarrow} 1.
\end{align*}  	   		
\end{proof}

Having established Theorem \ref{ThmConsistency}, we now show for $A_{ij} \sim \operatorname{Bernoulli}\left(\pi_i \pi_j\right)$ and $A_{ij} \sim \operatorname{Poisson}\left(\pi_i \pi_j\right)$ that  $\widehat{\V d}_i/ \V d_i \stackrel{P}{\rightarrow} 1$.
This allows us to apply Theorem \ref{ThmConsistency} to conclude that $\sqrt{\V d_i/ \E \| \bm{d} \|_1}$ can be estimated consistently via its plug-in estimator. \\

\underline{$A_{ij} \sim \operatorname{Poisson}\left(\pi_i \pi_j\right)$}:
For Poisson-distributed edges, $\E A_{ij}= \V A_{ij}$ for all $i,j$. Hence, we obtain
\begin{align} \notag
	\frac{	\widehat{\V d}_i}{ \V d_i} &= \frac{	\widehat{\E d}_i}{ \E d_i}
	\\ \notag & = \frac{	\hat{\pi}_1 \left\|\bm{\hat{\pi}}\right\|_1 - \hat{\pi}_i^2}{ \E d_i}
	\\ \notag & = \frac{	\frac{d_i}{\sqrt{\left\| \bm{d}\right\|_1}} \frac{\left\| \bm{d} \right\|_1}{\sqrt{\left\| \bm{d} \right\|_1}} - \frac{d_i^2}{\left\| \bm{d}\right\|_1}}{ \E d_i}
	\\ \notag& = \frac{d_i}{ \E d_i} \left[1 - \frac{d_i}{\left\| \bm{d}\right\|_1} \right]
	\\ \notag& = \left[1 + \mathcal{O}_P\left(\sqrt{\frac{\V d_i}{\left(\E d_i\right)^2}}\right) \right] \left[1 - \frac{d_i}{\left\| \bm{d}\right\|_1} \right] \quad \text{(Chebyshev's inequality)}
	\\ \label{26102015}& = \left[1 + \mathcal{O}_P\left(\frac{1}{\sqrt{\E d_i}}\right) \right] \left[1 - \frac{d_i}{\left\| \bm{d}\right\|_1} \right]. \quad \text{(Assumption \ref{over-dispersed})}
\end{align}

Furthermore, from Assumptions \ref{nonodeincontrol} ($n \pi_i/\left\|\bm{\pi}\right\|_1 = \mathcal{O}(1) $), \ref{sparse} ($\Rightarrow \E d_i  \to \infty$), and \ref{over-dispersed} ($\V A_{ij} = \Theta \left(\E A_{ij}\right)$), it follows that $\frac{\left\|\bm{\pi}\right\|_1}{\pi_i} \, \frac{d_i}{\| \bm{d} \|_1} \stackrel{P}{\rightarrow} 1$, as we will now show.

We write
			\begin{equation} \label{11112015c}
						\frac{\left\|\bm{\pi}\right\|_1}{\pi_i} \, \frac{d_i}{\| \bm{d} \|_1}  = \underbrace{\left( \frac{\left\|\bm{\pi}\right\|_1}{\pi_i} \, \frac{\E d_i  }{\| \bm{\pi} \|_1^2}
			 		\right)	}_{c_n} { \underbrace{\left( \frac{\| \bm{d} \|_1}{\| \bm{\pi} \|_1^2} \right)}_{E_n} }^{-1} \underbrace{\left( \frac{ d_i }{\E d_i  }\right) }_{F_n} .
			\end{equation}
						By Chebyshev's inequality and from Assumptions \ref{sparse} and \ref{over-dispersed}, we know that
\begin{equation*}
						F_n=\frac{d_i }{ \E d_i} = 1 + \mathcal{O}_P\left( \frac{1}{\sqrt{\E d_i}}\right).
\end{equation*}						 For $E_n$, we will first establish the equivalence
			\begin{align*}
				&\frac{\| \bm{d} \|_1}{ \E \| \bm{d} \|_1}
				= \frac{\| \bm{d} \|_1 }{ \| \bm{\pi} \|_1^2 - \| \bm{\pi} \|_2^2}
				= \frac{\| \bm{d} \|_1 }{ \| \bm{\pi} \|_1^2}  \left[1 - \frac{\| \bm{\pi} \|_2^2}{\| \bm{\pi} \|_1^2}\right]^{-1}
				\\ \Leftrightarrow \quad  & \frac{\| \bm{d} \|_1 }{ \| \bm{\pi} \|_1^2} = \frac{\| \bm{d} \|_1}{ \E \| \bm{d} \|_1} \left[1 - \frac{\| \bm{\pi} \|_2^2}{\| \bm{\pi} \|_1^2}\right].
			\end{align*}
By Eq.\ \eqref{twonormoveroneneormsqu}, we know that from Assumption \ref{nonodeincontrol} it follows that $\| \bm{\pi} \|_2^2/\| \bm{\pi} \|_1^2 = \mathcal{O}\left(1/n\right)$.	Furthermore, by Chebyshev's inequality and from Assumptions \ref{sparse} and \ref{over-dispersed}, $\| \bm{d} \|_1 / \E \| \bm{d} \|_1  \overset{P}{\longrightarrow} 1$. Thus, it follows that
			\begin{align} \label{27102015}
	E_n=\frac{\| \bm{d} \|_1}{\| \bm{\pi} \|_1^2} = \frac{\| \bm{d} \|_1}{ \E \| \bm{d} \|_1} \left[1 - \frac{\| \bm{\pi} \|_2^2}{\| \bm{\pi} \|_1^2}\right]
	= 1 + \mathcal{O}_P\left(\frac{1}{\min\left(n, \sqrt{\E \| \bm{d} \|_1 }\right)}\right).
			\end{align}
For the non-random sequence $\{c_n; n \in \N\}$ in Eq.\ \eqref{11112015c} it holds that
\begin{align*}
	c_n &=  \frac{\left\|\bm{\pi}\right\|_1}{\pi_i} \, \frac{\E d_i  }{\| \bm{\pi} \|_1^2}
\\ &=  \frac{\left\|\bm{\pi}\right\|_1}{\pi_i} \, \frac{\pi_i \left\|\bm{\pi}\right\|_1 }{\| \bm{\pi} \|_1^2} \left[1- \frac{\pi_i}{\left\|\bm{\pi}\right\|_1 }\right]
\\ &=   \left[1 + \mathcal{O}\left(\frac{1}{n}\right)\right]. \quad \text{(Assumption \ref{nonodeincontrol})}
\end{align*}

 	  The inverse of a random variable which converges in probability to a constant $c$ must in turn converge to $1/c$, as long as $c \neq 0$ \cite[Theorem 2.1.3]{Lehmann199812}. Furthermore, the product of two random variables, converging in probability to a constant $c$ and a constant $d$ respectively, itself converges to the product of the constants $cd$ \cite[Theorem 2.1.3]{Lehmann199812}. Thus, it follows that
  \begin{align}
 	  \label{26102015g} &\frac{\left\|\bm{\pi}\right\|_1}{\pi_i} \, \frac{d_i}{\| \bm{d} \|_1} = c_n E_n^{-1} F_n =  1 + \mathcal{O}_P\left(\frac{1}{\min_i \left(\sqrt{\E d_i}, n\right)}\right).
 	   \\ \notag \Leftrightarrow \quad & \frac{d_i}{\| \bm{d} \|_1} = \mathcal{O}_P \left(\frac{1}{n}\right). \quad \text{(Assumption \ref{nonodeincontrol})}
 \end{align}
Recall from Eq.\ \eqref{26102015} that
 	\begin{align*}
	\frac{	\widehat{\V d}_i}{ \V d_i}
 & = \left[1 + \mathcal{O}_P\left(\frac{1}{\sqrt{\E d_i}}\right) \right] \left[1 - \frac{d_i}{\left\| \bm{d}\right\|_1} \right].
 \intertext{In turn, we obtain the required result; i.e.,}
 \frac{	\widehat{\V d}_i}{ \V d_i}  & = 1 + \mathcal{O}_P\left(\frac{1}{\min_i \left(\sqrt{\E d_i}, n\right)} \right).
\end{align*}
From Assumption \ref{sparse} $\left(\pi_i = \omega\left(1/ \sqrt{n}\right)\right)$, it follows that $\min_i \E d_i $ diverges. Hence, we have shown the required result that $\V d_i$ can be consistently estimated by its plug-in estimator $\widehat{\V d}_i$.  \\
 	
\underline{$A_{ij} \sim \operatorname{Bernoulli}\left(\pi_i \pi_j\right)$}:
For Bernoulli-distributed edges, we obtain $\V d_i = \E d_i - \pi_i^2 \left\|\bm{\pi}\right\|_2^2 + \pi_i^4$ \cite{Olhede2012}. We write
\begin{align}
\notag	\frac{	\widehat{\V d}_i}{ \V d_i}
 & = \frac{\hat{\pi}_i \left\|\bm{\hat{\pi}}\right\|_1 - \hat{\pi}_i^2 - \hat{\pi}_i^2 \left\|\bm{\hat{\pi}}\right\|_2^2 + \hat{\pi}_i^4}{ \pi_i \left\|\bm{\pi}\right\|_1 - \pi_i^2 - \pi_i^2 \left\|\bm{\pi}\right\|_2^2 + \pi_i^4}.
\end{align}
 It can easily been seen that $\hat{\pi}_i \left\|\bm{\hat{\pi}}\right\|_1 = d_i$ and $\left\|\bm{\hat{\pi}}\right\|_2^2 = \left\|\bm{d}\right\|_2^2 / \left\|\bm{d}\right\|_1$. It follows that
\begin{align}
  \notag & = \frac{d_i - d_i^2/\left\|\bm{d}\right\|_1 - d_i^2 \left\|\bm{d}\right\|_2^2/\left\|\bm{d}\right\|_1^2 + d_i^4/\left\|\bm{d}\right\|_1^2}{ \pi_i \left\|\bm{\pi}\right\|_1 - \pi_i^2 - \pi_i^2 \left\|\bm{\pi}\right\|_2^2 + \pi_i^4}
\\ \notag & = \frac{d_i - d_i^2/\left\|\bm{d}\right\|_1 - d_i^2 \left\|\bm{d}\right\|_2^2/\left\|\bm{d}\right\|_1^2 + d_i^4/\left\|\bm{d}\right\|_1^2}{ \pi_i \left\|\bm{\pi}\right\|_1 - \pi_i^2
 \left\|\bm{\pi}\right\|_2^2 } \cdot \left[1 + o(1)\right] \qquad \text{(Assumption \ref{nonodeincontrol})}
 \\ \label{26102015b} & = \frac{d_i \left[ 1 - d_i/\left\|\bm{d}\right\|_1\right] - d_i^2 \left[  \left\|\bm{d}\right\|_2^2/\left\|\bm{d}\right\|_1^2 + d_i^2/\left\|\bm{d}\right\|_1^2\right]}{ \pi_i \left\|\bm{\pi}\right\|_1 - \pi_i^2
 \left\|\bm{\pi}\right\|_2^2 } \cdot \left[1 + o(1)\right].
\end{align}
We have seen in Eq.\ \eqref{12102015d} that Assumptions \ref{sparse} and \ref{over-dispersed} imply that
\begin{align*}
\frac{d_i}{ \sqrt{ \| \bm{d} \|_1 }}
& = \frac{ \E d_i }{ \sqrt{ \E \| \bm{d} \|_1 } } \left[ 1 + \mathcal{O}_P\left( \frac{ 1 }{ \sqrt{ \E d_i } } \right) \right].
\end{align*}
It follows from identical arguments that
\begin{align}
\frac{d_i}{  \| \bm{d} \|_1 }
& = \frac{ \E d_i }{  \E \| \bm{d} \|_1 }  \left[ 1 + \mathcal{O}_P\left( \frac{ 1 }{ \sqrt{ \E d_i } } \right) \right]. \label{26102015c}
\end{align}
From Assumption \ref{nonodeincontrol}, we conclude that
\begin{align}
\frac{ \E d_i }{  \E \| \bm{d} \|_1 }
&= \frac{ \pi_i \left( 1 - \pi_i / \| \bm{\pi} \|_1 \right) }{  \| \bm{\pi} \|_1 \left(1 - \| \bm{\pi} \|_2^2 / \| \bm{\pi} \|_1^2\right) }  \notag
\\  & = \frac{\pi_i}{ \| \bm{\pi} \|_1} \left[ 1 + \mathcal{O}\left( \frac{ 1 }{n} \right) \right]  \qquad \text{(see Eq.\ \eqref{12102015c})} \notag
\\  & =  \mathcal{O}\left( \frac{ 1 }{n} \right) . \qquad \text{(Assumption \ref{nonodeincontrol})} \label{26102015d}
\end{align}
Combining Eqs.\ \eqref{26102015c} and~\eqref{26102015d}, it follows that
\begin{align*}
\frac{d_i}{ \| \bm{d} \|_1 }
  = \mathcal{O}_P\left( \frac{ 1 }{n} \right) .
\end{align*}

It follows in turn that in combination with Eq.\ \eqref{26102015b}, we obtain
\begin{align}
\frac{	\widehat{\V d}_i}{ \V d_i}
 & = \frac{d_i  - d_i^2   \left\|\bm{d}\right\|_2^2/\left\|\bm{d}\right\|_1^2 }{ \pi_i \left\|\bm{\pi}\right\|_1 - \pi_i^2
 \left\|\bm{\pi}\right\|_2^2 } \cdot \left[1 + o_P(1)\right] \notag
\\ & = \underbrace{\frac{d_i}{ \pi_i \left\|\bm{\pi}\right\|_1}}_{R_n} \cdot \underbrace{\frac{ 1  - d_i/\left\|\bm{d}\right\|_1   \left\|\bm{d}\right\|_2^2/\left\|\bm{d}\right\|_1 }{1 - \pi_i / \left\|\bm{\pi}\right\|_1 \;
 \left\|\bm{\pi}\right\|_2^2}}_{S_n} \cdot \left[1 + o_P(1)\right]. \label{15122015h}
\end{align}

\underline{Term $R_n$:}
\begin{align*}
	R_n &= \frac{d_i}{ \pi_i \left\|\bm{\pi}\right\|_1}
	\\ & = \frac{\E d_i}{ \pi_i \left\|\bm{\pi}\right\|_1} \left[1 + \mathcal{O}_P\left(\sqrt{\frac{\V d_i}{\left(\E d_i\right)^2}}\right)\right] \qquad \text{(Chebyshev's inequality)}
		\\ & = \frac{\E d_i}{ \pi_i \left\|\bm{\pi}\right\|_1} \left[1 + \mathcal{O}_P\left(\frac{1}{\sqrt{\E d_i}}\right)\right] \qquad \text{(Assumption \ref{over-dispersed})}
		\\ & = 1 + \mathcal{O}_P\left(\frac{1}{\sqrt{\E d_i}}\right) \qquad \text{(Assumption \ref{nonodeincontrol})}
		\\ & = 1 + o_P\left(1\right). \qquad \text{(Assumption \ref{sparse})}
\end{align*}

\underline{Term $S_n$:}
 We show the convergence of $S_n$ from Eq.\ \eqref{15122015h} in two steps:
\begin{enumerate}
	\item $						\frac{\left\|\bm{\pi}\right\|_1}{\pi_i} \, \frac{d_i}{\| \bm{d} \|_1} \overset{P}{\longrightarrow} 1
$;
\item $					\left(\left\|\bm{\pi}\right\|_2^2\right)^{-1} \frac{\| \bm{d}\|_2^2}{ \| \bm{d}\|_1}\quad \stackrel{P}{\rightarrow} \quad 1
$.
\end{enumerate}

\underline{Step 1:} This step follows analogously to Eq.\ \eqref{26102015g} for  $A_{ij} \sim \operatorname{Poisson}\left(\pi_i \pi_j\right)$.

\underline{Step 2:}	We write the ratio of interest as
					\begin{align*}
							&\left(\left\|\bm{\pi}\right\|_2^2\right)^{-1} \frac{\| \bm{d}\|_2^2}{ \| \bm{d}\|_1} = \left(\frac{\| \bm{d}\|_1}{ \left\|\bm{\pi}\right\|_1^2}\right)^{-1} \cdot	\left(\frac{\| \bm{d}\|_2^2}{ \E \| \bm{d}\|_2^2  }\right)
								\cdot \left( \frac{\E \| \bm{d}\|_2^2  }{\left\|\bm{\pi}\right\|_2^2 \left\|\bm{\pi}\right\|_1^2}\right) = L_n^{-1} M_n t_n.
					\end{align*}
 Now, we analyze $L_n$, $M_n$ and $t_n$ in consecutive order.	%
 Under Assumptions \ref{nonodeincontrol}, \ref{sparse} and \ref{over-dispersed}, we know that $L_n=\| \bm{d} \|_1 / \| \bm{\pi} \|_1^2
					\overset{P}{\longrightarrow} 1$  (see Eq.\ \eqref{27102015}).
 Furthermore, combining Eqs.\ \eqref{27102015a} and \eqref{12102015f} enables us to conclude that $M_n=\|\bm{d}\|_2^2/\E \|\bm{d}\|_2^2 \overset{P}{\longrightarrow} 1$ (under Assumptions \ref{nonodeincontrol} and \ref{over-dispersed}). %
 From Eq.\ \eqref{12102015f}, we know that under Assumption \ref{nonodeincontrol}, the sequence $\left\{t_n; n \in \N\right\}$ converges to 1.

  	  The inverse of a random variable which converges in probability to a constant $c$, must in turn converge to $1/c$, as long as $c \neq 0$ \cite[Theorem 2.1.3]{Lehmann199812}. Furthermore, the product of two random variables, converging in probability to a constant $c$ and a constant $d$ respectively, itself converges to the product of the constants $cd$ \cite[Theorem 2.1.3]{Lehmann199812}. Thus, Step 2 follows.
  	
  	  Returning now to Eq.\ \eqref{15122015h} and following the same argument, we conclude that $S_n \stackrel{P}{\rightarrow} 1$ and in turn, $	\widehat{\V d}_i/ \V d_i = R_n S_n \left[1+o_P(1)\right] \stackrel{P}{\rightarrow} 1$ for Bernoulli-distributed edges $(A_{ij} \sim \operatorname{Bernoulli}\left(\pi_i \pi_j\right))$.

\section{Proof of Theorem 1}
We now state and prove Theorem \ref{DensityM}, which is identical to Theorem~1 in the main text, except for the formulation of the weights $\beta_j$, $j=1, \ldots, n$.  In Corollary \ref{APP:LemmabetaStar} below, we introduce the formulation for $\beta_j$ used in Theorem~1 to improve interpretability and show that both formulations are asymptotically equivalent. The proof below expands on the proof sketch given in the main text.

\begin{appxtheorem}[Central limit theorem for modularity]
\label{DensityM}
In addition to  Assumptions \ref{nonodeincontrol}--\ref{skewed}, suppose that the number $K$ of communities grows strictly more slowly than $n$(; i.e., $K/n \rightarrow 0$). Then, as $n \rightarrow \infty$,
 	\begin{align*}
 			\frac{\widehat{Q}- \muMod}{\sigMod} \stackrel{d}{\rightarrow} \No(0,1),
 	\end{align*}
where
\begin{align*}
		\muMod &= \sum_{j=1}^n \sum_{i<j}  \frac{\E A_{ij} \left(\E d_i + \E d_j - \left\|\bm{\pi}\right\|_2^2\right)}{\E \left\|\bm{d}\right\|_1} \ingroup,
		\\ \sigMod^2 &= \sum_{j=1}^n \sum_{i < j} \left[  \ingroup  +  \beta_i + \beta_j \right]^2 \V \left( A_{ij} \right).
\end{align*}
\end{appxtheorem}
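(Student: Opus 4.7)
The plan is to combine the bias--variance decomposition of Theorem \ref{modaswithinandbetween} with a Lyapunov central limit theorem applied to a sum of independent edge contributions. Theorem \ref{modaswithinandbetween} already yields $\widehat{Q} - \muMod = M_n + \epsilon$, where
\begin{equation*}
M_n = \sum_{i=1}^n \alpha_i \left(d_i^w - \E d_i^w\right) + \sum_{i=1}^n \beta_i \left(d_i^b - \E d_i^b\right)
\end{equation*}
and $\epsilon$ collects the remainder terms produced by replacing $\pij$ with its expected counterpart. My strategy is first to show $M_n/\sigMod \stackrel{d}{\to} \No(0,1)$, then to verify $\epsilon/\sigMod \stackrel{P}{\to} 0$, and finally to combine these via Slutsky's theorem.

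The first step is an algebraic reorganization. Substituting $\alpha_i = 1/2 + \beta_i$ and using the identities $\sum_i d_i^w = 2 \sum_{i<j} A_{ij} \ingroup$ and $\sum_i \beta_i d_i = \sum_{i<j} (\beta_i + \beta_j) A_{ij}$, I collect coefficients per edge to obtain
\begin{equation*}
M_n = \sum_{j=1}^n \sum_{i<j} \left[\ingroup + \beta_i + \beta_j\right] \left(A_{ij} - \E A_{ij}\right).
\end{equation*}
Since the $A_{ij}$ are independent under Definition \ref{Def:DegreebasedModel}, this expresses $M_n$ as a weighted sum of independent, mean-zero terms with coefficients $c_{ij} = \ingroup + \beta_i + \beta_j$ and total variance $\sigMod^2 = \sum_{i<j} c_{ij}^2 \V A_{ij}$, matching Eq.\ \eqref{sigMod} exactly.

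The second step is to apply Lyapunov's CLT (with $\delta = 1$) to $M_n/\sigMod$. Assumption \ref{skewed} allows me to control $\E |A_{ij} - \E A_{ij}|^3$ by a constant multiple of $\V A_{ij}$, so the Lyapunov ratio reduces to a multiple of $\sum_{i<j} |c_{ij}|^3 \V A_{ij} / \sigMod^3$. Since $|c_{ij}| \leq 1 + |\beta_i| + |\beta_j|$, it suffices to bound $\max_i |\beta_i|$ uniformly and to exhibit a lower bound on $\sigMod^2$ of the right order. The weights $\beta_i$ measure the deviation between the global and node-local proportions of expected within-group edges; under $K/n \to 0$ together with Assumptions \ref{nonodeincontrol}--\ref{sparse}, these weights remain of order one (or smaller) for most nodes, while $\sigMod^2$ grows at least as fast as the aggregate variance of the within-group edge contributions, forcing the Lyapunov ratio to zero. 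Handling the remainder $\epsilon$ then follows by bounding its variance using the Corollary to Theorem 2 (which shows $\pij - \E A_{ij}$ is of smaller stochastic order than $A_{ij} - \E A_{ij}$) and invoking Chebyshev's inequality to conclude $\epsilon = o_P(\sigMod)$.

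The main obstacle I anticipate is the combined control of the weights $\{\beta_i\}$ and the variance $\sigMod^2$ under arbitrary group assignments satisfying only $K/n \to 0$. The weights depend implicitly on the group structure, and pathological assignments (for instance a few very small groups next to a very large one) could in principle inflate some $\beta_i$ while shrinking the $\ingroup$ contribution to $\sigMod^2$. The key will be to translate the growth condition on $K$ into a uniform bound on $\beta_i$ and a matching lower bound on $\sigMod^2$; combining Assumption \ref{nonodeincontrol} (no dominating node), Assumption \ref{sparse} (not too sparse), and Assumption \ref{over-dispersed} (variance comparable to expectation) with $K/n \to 0$ should deliver this balance, after which Slutsky's theorem closes the argument.
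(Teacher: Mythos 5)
Your skeleton coincides with the paper's: both rewrite the centered statistic as a single weighted sum $\sum_{j}\sum_{i<j} c_{ij}\left(A_{ij}-\E A_{ij}\right)$ with $c_{ij}=\ingroup+\beta_i+\beta_j$, apply a Lyapunov condition with exponent $1$ using Assumption~\ref{skewed}, and finish with Slutsky. However, the two steps you defer are precisely the substance of the paper's proof, and as written your plan does not close either of them. First, the lower bound on $\sigMod^2$: you correctly flag that $\ingroup$ and $\beta_i+\beta_j$ could cancel, but ``the weights remain of order one for most nodes'' is not an argument. The paper resolves this by showing (Eq.\ \eqref{30112015aabb}) that $c_{ij}$ is, up to an additive $\mathcal{O}(1/n)$, a function of the pair of group labels alone, and then computing the variance at the group level: with $a_k=\sum_i \pi_i\delta_{g(i)=k}$ one gets $\sigMod^2=\Theta\left(\left\|\bm{a}\right\|_2^2\right)$ (Eq.\ \eqref{VarXn16}), where positivity of the leading constant comes from an explicit completing-the-square identity, $\bigl[\sqrt{3}\,\|\bm{a}\|_2/\|\bm{a}\|_1-1/\sqrt{3}\bigr]^2+2/3$, which rules out the cancellation you worry about for \emph{every} assignment; divergence then follows from $\left\|\bm{a}\right\|_2^2\geq \left\|\bm{\pi}\right\|_1^2/K=\omega(n/K)=\omega(1)$, which is where $K=o(n)$ and Assumption~\ref{sparse} enter. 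Nothing in your proposal substitutes for this computation, and a bound on $\max_i|\beta_i|$ alone cannot, since the issue is a possible degeneracy of the aggregate quadratic form, not of individual weights.

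Second, the remainder. Theorem~\ref{modaswithinandbetween} as stated only says $\epsilon$ is ``a random error term''; its size is established in the paper only inside the proof (Lemma~\ref{Modularity_reform}), where five separate error contributions are each shown to be $\mathcal{O}_P\left(\errorModtotal\right)$ with $\errorModtotal$ given by Eq.\ \eqref{errorModTotal}, and then $\errorModtotal/\sigMod\rightarrow 0$ is verified using the same $\Theta\left(\left\|\bm{a}\right\|_2^2\right)$ bound together with Assumptions~\ref{sparse} and~\ref{notcomplete}. Your proposed route --- bound the variance of $\epsilon$ via the Corollary on $\pij-\E A_{ij}$ and apply Chebyshev --- does not work as stated: the quantities $\pij-\E A_{ij}$ are strongly dependent across pairs (all share the fluctuations of $\left\|\bm{d}\right\|_1$ and of the degrees), so the variance of their within-group aggregate is not controlled by per-pair rates; a crude aggregation of the per-pair bound over the $\Theta(n^2/K)$ within-group pairs yields a term that can exceed $\sigMod\asymp\left\|\bm{a}\right\|_2$ (e.g.\ for bounded $\pi_i$ and fixed $K$ it is of order $n^{3/2}$ versus $\sigMod\asymp n$). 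The point of the paper's Taylor-expansion bookkeeping is exactly that the dominant correlated fluctuation is extracted into the $\beta$-weighted degree sums (hence into $X_n$), leaving a residual of the much smaller order $\errorModtotal$; invoking the per-pair CLT plus Chebyshev cannot reproduce this cancellation. So the proposal is the right architecture but has genuine gaps at both load-bearing steps.
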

The $\beta_i$ are defined in Eq.\ \eqref{beta} in Lemma \ref{Modularity_reform} below and are non-random.

\begin{proof}
The proof consists of two main steps. First, in Lemma \ref{Modularity_reform}, we will relate modularity to a linear combination of within-group degrees ($d_i^w$ in Eq.\ \eqref{withinandbetween} below) and between-group degrees ($d_i^b$ in Eq.\ \eqref{withinandbetween} below). Second, in Lemma~\ref{billinslayformod}, we will show that this linear combination,  when appropriately standardized, converges in distribution to a $\No\left(0,1\right)$ random variable.

 Let us first note some preliminaries. Recall from the main text:
\begin{align}
d_j^w &= \sum_{i \neq j} A_{ij} \ingroup \quad  \text{and} \quad
d_j^b = \sum_{i \neq j} A_{ij} \NOTingroup. \label{withinandbetween}
\end{align}
 Let us denote
 \begin{align*}
 		  &\ingroupPara=\sum_{i \neq j} \pi_i \; \ingroup
 		  \quad  \text{and} \quad
 		\left\|\bm{\pi}\right\|_1^{\neg g(j)}=\sum_{i =1}^n \pi_i \; \NOTingroup.
 \end{align*}		
 We obtain
\begin{align}
		\E d_j^w = &\pi_j \ingroupPara
		\quad  \text{and} \quad
	\E d_j^b = \pi_j \|\bm{\pi}\|_1^{\neg g(j)} \label{Expdib}.
\end{align}

We are now ready to proceed with our analysis. The following Lemma is identical to Lemma~1 in the main document.
\begin{appxlemma} \label{Modularity_reform}
Consider Assumptions \ref{nonodeincontrol}--\ref{over-dispersed} ($\pi_i/ \left\|\bm{\pi}\right\|_1= \mathcal{O}\left(1/ n\right)$, $\pi_i = \omega\left(1/ \sqrt{n}\right)$, $\pi_i = o\left(\sqrt{n}\right), \E A_{ij}=\Theta\left(\V A_{ij}\right)$). Then, the following identity holds:
\begin{align}
 		\widehat{Q} &= \muMod + \left(\sum_{j=1}^n \alpha_j \left[d_j^w - \E d_j^w\right]  + \sum_{j=1}^n \beta_j \left[d_j^b - \E d_j^b\right]\right) + \mathcal{O}_P\left(\errorModtotal\right), \notag
\intertext{where the non-random quantities $\alpha_j$, $\beta_j$, and $\errorModtotal$ are defined as follows:}
\beta_j & = \left[ \frac{1}{2} \sum_{l=1}^n   \ingroupParaL  \frac{\E d_l}{\E \left\| \bm{d} \right\|_1} -  \ingroupPara\right]  \frac{1}{\sqrt{\E \left\| \bm{d} \right\|_1}}, \label{beta}
\\ \alpha_j & = \frac{1}{2}  + \beta_j, \label{alpha}
\\ \errorModtotal & = \frac{\sum_{j=1}^n \sum_{i<j} \pi_i \pi_j \ingroup}{\min\left(n, \left\|\bm{\pi}\right\|_1\right) \, \min_l \sqrt{ \E d_l }}. \label{errorModTotal}
\end{align}
\end{appxlemma}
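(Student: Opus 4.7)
The plan is to write $\widehat{Q}$ as a sum of observed minus fitted edges, perform a first-order Taylor expansion of the fitted edges $\hat\pi_i\hat\pi_j=d_id_j/\|\bm{d}\|_1$ about their expectations, and then regroup all linear-in-degree terms by node after splitting each degree into its within- and between-group pieces.

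First I would split $A_{ij}-\hat\pi_i\hat\pi_j=(A_{ij}-\E A_{ij})+(\pi_i\pi_j-\hat\pi_i\hat\pi_j)$, using $\E A_{ij}=\pi_i\pi_j$. The first piece summed over $i<j$ with $\ingroup$ collapses to $\tfrac12\sum_j(d_j^w-\E d_j^w)$ by the definition of the within-group degree $d_j^w$; this produces the $\tfrac12$ appearing in $\alpha_j$.

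For the second piece I would Taylor-expand $d_id_j/\|\bm{d}\|_1$ about $(\E d_i,\E d_j,\E\|\bm{d}\|_1)$ in probability, justified by Chebyshev on $d_i-\E d_i$ and $\|\bm{d}\|_1-\E\|\bm{d}\|_1$ under Assumptions \ref{sparse}--\ref{over-dispersed}. The zeroth-order term $\E d_i\E d_j/\E\|\bm{d}\|_1$ compares with $\pi_i\pi_j$ via the elementary identity
\begin{equation*}
\pi_i\pi_j-\frac{\E d_i\,\E d_j}{\E\|\bm{d}\|_1}=\frac{\E A_{ij}\,(\E d_i+\E d_j-\|\bm{\pi}\|_2^2)}{\E\|\bm{d}\|_1}+r_{ij},
\end{equation*}
with $r_{ij}=\pi_i\pi_j(\pi_i^2+\pi_j^2-\pi_i\pi_j)/\E\|\bm{d}\|_1$; summed with $\ingroup$ the first term reproduces $\muMod$ and $r_{ij}$ is absorbed in $\errorModtotal$. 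The first-order Taylor terms are
\begin{equation*}
\frac{\E d_j(d_i-\E d_i)+\E d_i(d_j-\E d_j)}{\E\|\bm{d}\|_1}-\frac{\E d_i\,\E d_j\,(\|\bm{d}\|_1-\E\|\bm{d}\|_1)}{(\E\|\bm{d}\|_1)^2}.
\end{equation*}

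Next I would regroup these linear terms by node: exchanging the order of summation and using the symmetry of $\ingroup$, the coefficient of $(d_i-\E d_i)$ aggregates to $\sum_{j\neq i}\E d_j\,\ingroup/\E\|\bm{d}\|_1$, while $\|\bm{d}\|_1-\E\|\bm{d}\|_1=\sum_l(d_l-\E d_l)$ contributes a common diagonal factor $\sum_{j,\,i<j}\E d_i\,\E d_j\,\ingroup/(\E\|\bm{d}\|_1)^2=\tfrac12\sum_l\ingroupParaL\,\E d_l/(\E\|\bm{d}\|_1)^2$ (using $\sum_l\E d_l^w=\sum_l\pi_l\ingroupParaL$). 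Substituting $d_l-\E d_l=(d_l^w-\E d_l^w)+(d_l^b-\E d_l^b)$ and collecting, the coefficient of $(d_j^w-\E d_j^w)$ becomes $\tfrac12+\beta_j$ and the coefficient of $(d_j^b-\E d_j^b)$ becomes $\beta_j$, matching \eqref{beta} and \eqref{alpha} after the leading-order identification $\E d_l\approx\pi_l\sqrt{\E\|\bm{d}\|_1}$.

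The main obstacle I expect is the uniform error bookkeeping. Three sources of error must each be absorbed into $\mathcal O_P(\errorModtotal)$: the second-order Taylor remainders, controlled via Chebyshev on $d_i$ and $\|\bm{d}\|_1$ under Assumption \ref{over-dispersed}; the algebraic remainder $r_{ij}$, small by Assumptions \ref{nonodeincontrol} and \ref{notcomplete}; and the replacement errors incurred when matching $\E d_l/\sqrt{\E\|\bm{d}\|_1}$ with $\pi_l$. Each such contribution is a double sum over $i<j$ of terms proportional to $\pi_i\pi_j\ingroup$, and the hard step is verifying that every one of them lies within the budget $\sum_{j,\,i<j}\pi_i\pi_j\ingroup/[\min(n,\|\bm{\pi}\|_1)\min_l\sqrt{\E d_l}]$, using $\E d_l=\Theta(\pi_l\|\bm{\pi}\|_1)$ and $\|\bm{\pi}\|_2^2/\|\bm{\pi}\|_1^2=\mathcal O(1/n)$ from Assumption \ref{nonodeincontrol}.
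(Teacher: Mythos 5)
Your proposal is correct in substance and arrives at the paper's own within/between-degree decomposition, but it organizes the linearization differently, so the comparison is worth recording. The paper never expands $d_i d_j/\|\bm{d}\|_1$ jointly in the degrees: it writes $\pij=\pi_i\pi_j+\pi_j(\hat\pi_i-\pi_i)+\pi_i(\hat\pi_j-\pi_j)+(\hat\pi_i-\pi_i)(\hat\pi_j-\pi_j)$, removes the bilinear term pair-by-pair as a relative error $\mathcal{O}_P(1/\min_l\sqrt{\E d_l})$ via Lemma \ref{15122015}, then Taylor-expands $1/\sqrt{\|\bm{d}\|_1}$, and assembles $\muMod$ in its Step 5 from an expansion of $(1-\|\bm{\pi}\|_2^2/\|\bm{\pi}\|_1^2)^{-1/2}$. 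You instead expand $d_i d_j/\|\bm{d}\|_1$ about $(\E d_i,\E d_j,\E\|\bm{d}\|_1)$ and get $\muMod$ from an exact algebraic identity: your $r_{ij}=\pi_i\pi_j(\pi_i^2+\pi_j^2-\pi_i\pi_j)/\E\|\bm{d}\|_1$ is indeed correct, equals $\mathcal{O}(\pi_i\pi_j/n^2)$ by Assumption \ref{nonodeincontrol}, and its within-group sum fits the $\errorModtotal$ budget because $\min_l\sqrt{\E d_l}=o(n)$ under Assumption \ref{notcomplete}; this is cleaner than the paper's Step 5. What the paper's ordering buys is that no coupled quadratic term survives, whereas your second-order remainder contains $\sum_{j}\sum_{i<j}(d_i-\E d_i)(d_j-\E d_j)\ingroup/\E\|\bm{d}\|_1$, which does not factor through a single common fluctuation and needs its own mean/variance (Chebyshev) bound --- it does fit the budget, using $\cov(d_i,d_j)=\V A_{ij}$ and Assumption \ref{over-dispersed}, but this is genuinely part of the bookkeeping you defer and has no counterpart in the paper's argument. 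Two slips to fix when writing it up: your displayed ``diagonal factor'' equality is off by a factor $\sqrt{\E\|\bm{d}\|_1}$ (one $\E d_l$ on the right should be $\pi_l$, equivalently the denominator should be $(\E\|\bm{d}\|_1)^{3/2}$), consistent with your own identification $\E d_l\approx\pi_l\sqrt{\E\|\bm{d}\|_1}$; and the coefficients your expansion yields directly are the $\beta_j^*$ of Corollary \ref{APP:LemmabetaStar}, so passing to the exact $\beta_j$ of Eq.\ \eqref{beta} adds a term of order $\mathcal{O}_P\bigl(n^{-1}\sum_j|\beta_j^*|\sqrt{\E d_j}\bigr)$ that must also be charged to $\errorModtotal$, which is precisely the bound the paper verifies in that corollary.
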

\begin{proof}
Since $\pij = d_i d_j/ \left\|\bm{d}\right\|_1$, modularity can be written as
\begin{align} \label{12102015j}
	\widehat{Q}  & = \sum_{j=1}^n \sum_{i<j} A_{ij}\ingroup
 	- \sum_{j=1}^n \sum_{i<j} \pij \ingroup.
\end{align}

We will show this lemma in six steps. We
\begin{enumerate}
	\item Write $\pij$ in terms of $\hat \pi_j = d_j / \sqrt{\left\|\bm{d}\right\|_1}$; \label{Epandpij}
	\item Expand the denominator $\sqrt{\left\|\bm{d}\right\|_1}$ around its mean in a convergent Taylor series; \label{Taylord1noem}
	\item Substitute $d_j = \E d_j + \mathcal{O}_P\left(\sqrt{\E d_j}\right)$ into the lower-order terms of the Taylor expansion of Step \ref{Taylord1noem};  \label{Taylordj}
	\item Apply the decomposition $d_j = d_j^w + d_j^b$, and center $d_j^w$ and $d_j^b$ about their respective means $\E d_j^w$ and $\E d_j^b$; \\[-0.3cm]				\label{centerdj}
	\item Collect all higher-order non-random terms in $\widehat{Q}$ into $\muMod$; and  \label{nonrandomtems}
	\item Show that the remaining lower-order random and non-random terms can be absorbed into $\errorModtotal$.  \label{errorterm}
\end{enumerate}

\underline{Step \ref{Epandpij}:}
Recall from Eq.\ \eqref{09082015b} that
\begin{align}
\pij &= \hat{\pi}_i \hat{\pi}_j \notag
\\ & = \pi_i \pi_j + \pi_j (\hat \pi_i - \pi_i) + \pi_i (\hat \pi_j - \pi_j) + (\hat \pi_i - \pi_i)(\hat \pi_j - \pi_j), \notag
\end{align}
and from Eq.\ \eqref{11112015} that, given  Assumptions \ref{nonodeincontrol}, \ref{sparse}, and \ref{over-dispersed}, it holds that
\begin{align*}
\frac{(\hat \pi_i - \pi_i)(\hat \pi_j - \pi_j) }{\pi_j (\hat \pi_i - \pi_i) + \pi_i (\hat \pi_j - \pi_j)} = \mathcal{O}_P \left(\frac{1}{\sqrt{ \E d_i} + \sqrt{\E d_j}}\right).
\end{align*}
As a consequence, we may combine these two results to write
\begin{align}
\pij
= \pi_i \pi_j + \left[\pi_j (\hat \pi_i - \pi_i) + \pi_i (\hat \pi_j - \pi_j)\right] \cdot \left(1+\mathcal{O}_P\left(\frac{1}{\min_l \sqrt{\E d_l}}\right)\right). \label{12102015k}
\end{align}
Focusing on the rightmost sum in Eq.\ \eqref{12102015j}, we then obtain from Eq.\ \eqref{12102015k}
\begin{align*}
	&\sum_{j=1}^n \sum_{i<j} \pij \ingroup
	- \sum_{j=1}^n \sum_{i<j} \pi_i \pi_j \ingroup
	\\ & \quad =  \left[ \sum_{j=1}^n \sum_{i<j} \pi_j (\hat \pi_i - \pi_i) \ingroup + \sum_{j=1}^n \sum_{i<j}  \pi_i (\hat \pi_j - \pi_j) \ingroup  \right]
		\\ & \qquad \qquad \cdot \left(1+\mathcal{O}_P\left(\frac{1}{\min_l \sqrt{ \E d_l} }\right)\right).
		\intertext{Renaming the indices in the first summand from $i$ to $j$ and vice versa leads to}
	 & \quad=  \left[ \sum_{j=1}^n \sum_{i\neq j} \pi_i (\hat \pi_j - \pi_j) \ingroup \right] \cdot \left(1+\mathcal{O}_P\left(\frac{1}{\min_l \sqrt{ \E d_l} }\right)\right).
\end{align*}

Hence, $\sum_{j=1}^n \sum_{i<j} \pij \ingroup$ can be substituted into Eq.\ \eqref{12102015j} as follows:
\begin{align*}
	\widehat{Q}  & = \sum_{j=1}^n\sum_{i<j} A_{ij}\ingroup
 	- \sum_{j=1}^n \sum_{i<j} \pi_i \pi_j \ingroup
 \\& \qquad- \sum_{j=1}^n   \sum_{i\neq j} \pi_i  \left(\hat{\pi}_j - \pi_j\right) \ingroup \cdot \left(1+\mathcal{O}_P\left(\frac{1}{\min_l \sqrt{ \E d_l} }\right)\right).
 \intertext{We now change from a relative error term to an absolute error. In addition, we substitute $\sum_{j=1}^n\sum_{i<j} A_{ij}\ingroup = \frac{1}{2}\sum_{j=1}^n d_j^w$, \; $\hat{\pi}_j = d_j / \sqrt{\left\|\bm{d}\right\|_1}$ and $\sum_{i\neq j} \pi_i \ingroup = \ingroupPara$:}
 & = \frac{1}{2}\sum_{j=1}^n d_j^w - \sum_{j=1}^n \sum_{i<j} \pi_i \pi_j \ingroup
\\
& \qquad - \left[\sum_{j=1}^n   \ingroupPara  \left(\frac{d_j}{\sqrt{\left\| \bm{d} \right\|_1}} \right)
- \sum_{j=1}^n   \sum_{i\neq j} \pi_i  \pi_j \ingroup \right]
\\ & \qquad + \mathcal{O}_P\left(\frac{1}{\min_l \sqrt{ \E d_l} }\sum_{j=1}^n   \ingroupPara  \left(\hat{\pi}_j - \pi_j\right)\right).
\end{align*}

We will show in Step \ref{errorterm} below that
\begin{align} \label{IntroErrorMod}
	\frac{1}{\min_l \sqrt{ \E d_l} }\sum_{j=1}^n   \ingroupPara  \left(\hat{\pi}_j - \pi_j\right) = \mathcal{O}_P\left(\errorModtotal\right),
\end{align}
where $\errorModtotal$ is the error term defined in Eq.\ \eqref{errorModTotal}. Thus,
\begin{align} \label{15122015b}
\widehat{Q} & = \frac{1}{2}\sum_{j=1}^n d_j^w + \sum_{j=1}^n \sum_{i<j} \pi_i \pi_j \ingroup
 - \sum_{j=1}^n   \ingroupPara  \frac{d_j}{\sqrt{\left\| \bm{d} \right\|_1}}   + \mathcal{O}_P\left(\errorModtotal\right).
\end{align}

\underline{Step \ref{Taylord1noem}:}
In this step we focus on the penultimate term in Eq.\ \eqref{15122015b}.
We appeal to a Taylor expansion of $\left(\left\| \bm{d} \right\|_1/ \E \left\| \bm{d} \right\|_1\right)^{-1/2} = f(x)=x^{-1/2}$ at 1, and then control the remainder using Chebyshev's inequality. As a consequence, we obtain from Assumption \ref{over-dispersed} $\left(\V A_{ij} = \Theta\left(\E A_{ij}\right)\right)$ that
\begin{align}
\begin{split}
&\sum_{j=1}^n   \ingroupPara  \frac{d_j}{\sqrt{\left\| \bm{d} \right\|_1}}
\\ & =  \sum_{j=1}^n   \ingroupPara  \frac{d_j}{\sqrt{\E \left\| \bm{d} \right\|_1}}
 \cdot
 \left[1 - \frac{1}{2}\left(\frac{ \left\| \bm{d} \right\|_1}{\E \left\| \bm{d} \right\|_1} - 1 \right) + \mathcal{O}_P\left(\frac{1}{\E \left\| \bm{d} \right\|_1}\right)\right] .
 \end{split} \label{10122015b}
 \end{align}
We will show in Step \ref{errorterm} below that
\begin{align} \label{seconderror}
\sum_{j=1}^n   \ingroupPara  \frac{d_j}{\sqrt{\E \left\| \bm{d} \right\|_1}} \cdot \frac{1}{\E \left\| \bm{d} \right\|_1} = \mathcal{O}_P\left(\errorModtotal\right).
\end{align}
Continuing Eq.\ \eqref{10122015b}, we have that
 \begin{align} \label{15122015c}
 & =  \sum_{j=1}^n   \ingroupPara  \frac{d_j}{\sqrt{\E \left\| \bm{d} \right\|_1}}
 - \frac{1}{2} \sum_{j=1}^n   \ingroupPara  \frac{d_j}{\sqrt{\E \left\| \bm{d} \right\|_1}} \left(\frac{ \left\| \bm{d} \right\|_1}{\E \left\| \bm{d} \right\|_1} - 1 \right) + \mathcal{O}_P\left(\errorModtotal\right).
\end{align}

\underline{Step \ref{Taylordj}:} From Chebyshev's inequality and Assumption \ref{over-dispersed}, we know that $d_j = \E d_j \left[1+ \mathcal{O}_P \left(1/\sqrt{\E d_j}\right)\right]$. Inserting this result into the second (i.e., lower-order) term of the Taylor expansion in Eq.\ \eqref{15122015c}, we obtain
\begin{align}
\begin{split}
 &= \sum_{j=1}^n   \ingroupPara  \frac{d_j}{\sqrt{\E \left\| \bm{d} \right\|_1}}
\\ & \quad  - \frac{1}{2} \sum_{j=1}^n   \ingroupPara  \frac{\E d_j}{\sqrt{\E \left\| \bm{d} \right\|_1}} \left(\frac{ \left\| \bm{d} \right\|_1}{\E \left\| \bm{d} \right\|_1} - 1 \right) \left[1+ \mathcal{O}_P\left(\frac{1}{\sqrt{\E d_j}}\right)\right] + \mathcal{O}_P\left(\errorModtotal\right).
\end{split} \label{17112015}
\end{align}
Applying Chebyshev's inequality and then Assumption \ref{over-dispersed}, we next obtain
\begin{align} \notag
&\frac{1}{2} \sum_{j=1}^n   \ingroupPara  \frac{\E d_j }{\sqrt{\E \left\| \bm{d} \right\|_1}} \left(\frac{ \left\| \bm{d} \right\|_1}{\E \left\| \bm{d} \right\|_1} - 1 \right) \frac{ 1 }{ \sqrt{\E d_j} } \label{errorthree}
\\ & = \mathcal{O}_P \left(\errorModtotal\right). \quad \text{(Step \ref{errorterm} below)}
\end{align}
Applying Eq.\ \eqref{errorthree} and then substituting $\sum_{j=1}^n d_j$ for $\left\| \bm{d} \right\|_1$ in Eq.\ \eqref{17112015}, we have
\begin{align}
\begin{split}  \label{17112015b}
\sum_{j=1}^n   \ingroupPara  \frac{d_j}{\sqrt{\left\| \bm{d} \right\|_1}}
& =  \frac{1}{2} \sum_{j=1}^n   \ingroupPara  \frac{\E d_j}{\sqrt{\E \left\| \bm{d} \right\|_1}}
\\ & \quad - \sum_{j=1}^n  \left[ \frac{1}{2} \sum_{l=1}^n   \ingroupParaL  \frac{\E d_l}{\E \left\| \bm{d} \right\|_1} -  \ingroupPara\right]
\frac{d_j}{\sqrt{\E \left\| \bm{d} \right\|_1}}
\\& \quad + \mathcal{O}_P \left(\errorModtotal\right).
\end{split}
\intertext{\underline{Step \ref{centerdj}:}
Applying $d_i = d_i^w + d_i^b$ leads to  the identity}
\begin{split}
  & =  \frac{1}{2} \sum_{j=1}^n   \ingroupPara  \frac{\E d_j}{\sqrt{\E \left\| \bm{d} \right\|_1}} + \mathcal{O}_P \left(\errorModtotal\right)
\\& \quad - \sum_{j=1}^n  \left[ \frac{1}{2} \sum_{l=1}^n   \ingroupParaL  \frac{\E d_l}{\E \left\| \bm{d} \right\|_1} -  \ingroupPara\right]
 \frac{d_j^w}{\sqrt{\E \left\| \bm{d} \right\|_1 }}
\\& \quad - \sum_{j=1}^n  \left[ \frac{1}{2} \sum_{l=1}^n   \ingroupParaL  \frac{\E d_l}{\E \left\| \bm{d} \right\|_1} -  \ingroupPara\right]
\frac{d_j^b}{\sqrt{\E \left\| \bm{d} \right\|_1}}.
\end{split} \label{15122015d}	
\end{align}
We define non-random factors $\beta_j$ and $\alpha_j$ as in Eqs.\ \eqref{beta} and~\eqref{alpha}; i.e.,
\begin{align*}
 \beta_j & = \left[ \frac{1}{2} \sum_{l=1}^n   \ingroupParaL  \frac{\E d_l}{\E \left\| \bm{d} \right\|_1} -  \ingroupPara\right]  \frac{1}{\sqrt{\E \left\| \bm{d} \right\|_1}}
 \quad \text{and} \quad
 \alpha_j  = \frac{1}{2}  + \beta_j.
\end{align*}
Combining the results from Eqs.\ \eqref{15122015b} and~\eqref{15122015d}, we may rewrite $\widehat{Q}$ in terms of $\alpha_j$ and $\beta_j$ as
\begin{align*}
	\widehat{Q} & = \sum_{i<j} \pi_i \pi_j \ingroup
- \frac{1}{2} \sum_{j=1}^n   \ingroupPara  \frac{\E d_j}{\sqrt{\E \left\| \bm{d} \right\|_1}}
+ \sum_{j=1}^n \alpha_j d_j^w
+ \sum_{j=1}^n  \beta_j d_j^b + \mathcal{O}_P \left(\errorModtotal\right).
\end{align*}

After centering $d_j^w$ and $d_j^b$ about their respective means, we obtain
\begin{align} \label{12112015d}
\begin{split}
	\widehat{Q}  & = \sum_{j=1}^n \alpha_j \left[d_j^w - \E d_j^w \right]
 + \sum_{j=1}^n  \beta_j \left[d_j^b-\E d_j^b\right]
 + \sum_{j=1}^n \alpha_j \E d_j^w
 + \sum_{j=1}^n \beta_j \E d_j^b
\\ & \quad
+ \sum_{i<j} \pi_i \pi_j \ingroup
- \frac{1}{2} \sum_{j=1}^n   \ingroupPara  \frac{\E d_j}{\sqrt{\E \left\| \bm{d} \right\|_1}} + 			
		\mathcal{O}_P \left(\errorModtotal\right).
\end{split}
\end{align}

\underline{Step \ref{nonrandomtems}}
We now address the non-random terms in modularity. We treat the non-random terms in the two lines of  Eq.\ \eqref{12112015d} separately; i.e.,
\begin{itemize}
	\item [a)] $\sum_{j=1}^n \alpha_j \E d_j^w
 + \sum_{j=1}^n \beta_j \E d_j^b$;
	\item [b)] $\sum_{i<j} \pi_i \pi_j \ingroup
- \frac{1}{2} \sum_{j=1}^n   \ingroupPara  \frac{\E d_j}{\sqrt{\E \left\| \bm{d} \right\|_1}}$.\\
\end{itemize}

\underline{Term a) :}

From the definition of $\alpha_j$ and $\beta_j$, we obtain
\begin{align} \label{12112015b}
	 a) &
	  = \frac{1}{2} \sum_{j=1}^n  \E d_j^w + \sum_{j=1}^n \beta_j \E d_j  \notag
\\ & = \sum_{j=1}^n \sum_{i < j} \pi_i \pi_j \ingroup
			+  \sum_{j=1}^n \left[  \frac{1}{2} \sum_{l=1}^n
					 \ingroupParaL  \frac{\E d_l}{\E \left\| \bm{d} \right\|_1}
					 - \ingroupPara\right]
			 \frac{\E d_j }{\sqrt{\E \|\bm{d}\|_1} } \notag
\\			 & = \sum_{j=1}^n \sum_{i < j} \pi_i \pi_j \ingroup
+  \left[ \frac{1}{2}  \sum_{l=1}^n \ingroupParaL  \frac{\E d_l}{\sqrt{\E \|\bm{d}\|_1} }\right]
			 \frac{\sum_{j=1}^n \E d_j }{\E \left\| \bm{d} \right\|_1}
	\notag
\\ 	& \quad 			-  \sum_{j=1}^n \ingroupPara
			 \frac{\E d_j }{\sqrt{\E \|\bm{d}\|_1} }	 \notag
\\ & = \sum_{j=1}^n \sum_{i < j} \pi_i \pi_j \ingroup
			- \frac{1}{2}  \sum_{j=1}^n \ingroupPara
			 \frac{\E d_j }{\sqrt{\E \|\bm{d}\|_1} }	\notag
\\ & = b).			 		
\end{align}

\underline{Term b) :}

Via straightforward calculations, one can show that
\begin{align}
 b)
 & = \sum_{j=1}^n \sum_{i < j} \pi_i\pi_j \ingroup - \frac{1}{2} \sum_{j=1}^n \sum_{i \neq j} \frac{\pi_i\left(\pi_j \left\|\bm{\pi}\right\|_1 - \pi_j^2\right)}{\left\| \bm{\pi} \right\|_1 \sqrt{ 1 - \frac{ \left\| \bm{\pi} \right\|_2^2}{\left\| \bm{\pi} \right\|_1^2}} }  \ingroup \notag
\\ \label{10122015d} \begin{split}
 & = \sum_{j=1}^n \sum_{i < j} \pi_i\pi_j \ingroup
\\ & \qquad - \frac{1}{2} \sum_{j=1}^n \sum_{i \neq j} \left[\pi_i\pi_j
- \frac{\pi_i \pi_j^2}{\left\| \bm{\pi} \right\|_1 	 }  \right] \left( 1 - \frac{ \left\| \bm{\pi} \right\|_2^2}{\left\| \bm{\pi} \right\|_1^2}\right)^{-\frac{1}{2}} \ingroup.
\end{split}
\end{align}
We know from Eq.\ \eqref{twonormoveroneneormsqu} that from Assumption \ref{nonodeincontrol} it follows that $ \left\| \bm{\pi} \right\|_2^2 / \left\| \bm{\pi} \right\|_1^2 = \mathcal{O}\left(1/n\right)$. As a consequence, we can apply a convergent Taylor expansion to $f(x)=(1-x)^{-1/2}$ at 0 to obtain
\begin{align} \label{15122015e}
\left( 1 - \frac{ \left\| \bm{\pi} \right\|_2^2}{\left\| \bm{\pi} \right\|_1^2}\right)^{-\frac{1}{2}} = 1 + \frac{1}{2} \frac{ \left\| \bm{\pi} \right\|_2^2}{\left\| \bm{\pi} \right\|_1^2} + \mathcal{O}{\left[\left(\frac{ \left\| \bm{\pi} \right\|_2^2}{\left\| \bm{\pi} \right\|_1^2}\right)^2\right]}.
\end{align}
As a consequence, it follows that we may express Eq.\ \eqref{10122015d} as
\begin{align}
b) & = \sum_{j=1}^n \sum_{i < j} \pi_i\pi_j \ingroup - \frac{1}{2} \sum_{j=1}^n \sum_{i \neq j} \pi_i\pi_j \ingroup \notag
 \\ &  -  \sum_{j=1}^n \sum_{i < j} \Biggl[  \frac{1}{2} \pi_i\pi_j \frac{ \left\| \bm{\pi} \right\|_2^2}{\left\| \bm{\pi} \right\|_1^2}
+ \pi_i\pi_j \;  \mathcal{O}\Biggl[\left(\frac{ \left\| \bm{\pi} \right\|_2^2}{\left\| \bm{\pi} \right\|_1^2}\right)^2\Biggr]   \Biggr] \ingroup \label{17112015d}
\\
&  + \frac{1}{2} \sum_{j=1}^n \sum_{i \neq j} \Biggl[ \frac{\pi_i \pi_j^2}{\left\| \bm{\pi} \right\|_1 	 }  + \frac{1}{2} \frac{\pi_i \pi_j^2}{\left\| \bm{\pi} \right\|_1 	 }  \frac{ \left\| \bm{\pi} \right\|_2^2}{\left\| \bm{\pi} \right\|_1^2}
+  \frac{\pi_i \pi_j^2}{\left\| \bm{\pi} \right\|_1 	 }  \;  \mathcal{O}\Biggl[\left(\frac{ \left\| \bm{\pi} \right\|_2^2}{\left\| \bm{\pi} \right\|_1^2}\right)^2\Biggr]
   \Biggr] \ingroup. \label{17112015e}
\end{align}

We identify the first terms in Eqs.\ \eqref{17112015d} and~\eqref{17112015e} as the terms of leading order. We will show in Step \ref{errorterm} that the remaining terms satisfy
\begin{align}
\label{15122015f}
\begin{split}
 &-  \sum_{j=1}^n \sum_{i < j} \Biggl[ \pi_i\pi_j \;  \mathcal{O}\Biggl[\left(\frac{ \left\| \bm{\pi} \right\|_2^2}{\left\| \bm{\pi} \right\|_1^2}\right)^2\Biggr]   \Biggr] \ingroup
\\ & \quad + \frac{1}{2} \sum_{j=1}^n \sum_{i \neq j} \Biggl[ \frac{1}{2} \frac{\pi_i \pi_j^2}{\left\| \bm{\pi} \right\|_1 	 }  \frac{ \left\| \bm{\pi} \right\|_2^2}{\left\| \bm{\pi} \right\|_1^2}
+  \frac{\pi_i \pi_j^2}{\left\| \bm{\pi} \right\|_1 	 }  \;  \mathcal{O}\Biggl[\left(\frac{ \left\| \bm{\pi} \right\|_2^2}{\left\| \bm{\pi} \right\|_1^2}\right)^2\Biggr]
   \Biggr] \ingroup
  \\& =\mathcal{O}\left(\errorModtotal\right),
\end{split}
\end{align}
where we remind the reader that $\errorModtotal$ is the error term defined in Eq.\ \eqref{errorModTotal}.

Finally, considering the leading-order terms in Eqs.\ \eqref{17112015d} and~\eqref{17112015e}, it then follows from the identity
\begin{equation*}
\sum_{j=1}^n \sum_{i \neq j} \pi_i \pi_j^2 \ingroup = \sum_{j=1}^n \sum_{i < j} \pi_i \pi_j \left(\pi_i + \pi_j\right) \ingroup
\end{equation*}
 that
\begin{align}
b) & =  \frac{1}{2} \sum_{j=1}^n  \sum_{i<j} \pi_i \pi_j \left[ \frac{\pi_i + \pi_j}{\left\|\bm{\pi}\right\|_1}
- \frac{\left\|\bm{\pi}\right\|_2^2}{ \left\|\bm{\pi}\right\|_1^2} \right]\ingroup   +
   \mathcal{O}\left(\errorModtotal\right). \label{06102015}
\end{align}

We may then combine terms a) and b) using Eqs.\ \eqref{12112015b} and~\eqref{06102015}, whence
\begin{align}
a) + b) = &\sum_{j=1}^n \sum_{i<j} \pi_i \pi_j \left[\frac{\pi_i + \pi_j}{\left\|\bm{\pi}\right\|_1}
- \frac{\left\|\bm{\pi}\right\|_2^2}{ \left\|\bm{\pi}\right\|_1^2} \right]\ingroup +  \mathcal{O}\left(\errorModtotal\right). \notag
\end{align}
In order to gain interpretability, we rearrange the term $a) + b)$ even further:
\begin{align}
  \quad = & \sum_{j=1}^n \sum_{i<j} \E A_{ij} \left[\frac{\pi_i \left\|\bm{\pi}\right\|_1 + \pi_j \left\|\bm{\pi}\right\|_1 - \left\|\bm{\pi}\right\|_2^2}{\left\|\bm{\pi}\right\|_1^2}
 \right]\ingroup +  \mathcal{O}\left(\errorModtotal\right) \notag
 \\  \quad = &\sum_{j=1}^n \sum_{i<j} \E A_{ij} \left[\frac{\pi_i \left\|\bm{\pi}\right\|_1 + \pi_j \left\|\bm{\pi}\right\|_1 - \left\|\bm{\pi}\right\|_2^2}{\E \left\|\bm{d}\right\|_1}
 \right]\left[1 - \frac{\left\|\bm{\pi}\right\|_2^2}{\left\|\bm{\pi}\right\|_1^2}\right] \ingroup  +  \mathcal{O}\left(\errorModtotal\right) \notag
 \\  \quad = &\sum_{j=1}^n \sum_{i<j} \E A_{ij} \left[\frac{\pi_i \left\|\bm{\pi}\right\|_1 + \pi_j \left\|\bm{\pi}\right\|_1 - \left\|\bm{\pi}\right\|_2^2}{\E \left\|\bm{d}\right\|_1}
 \right] \ingroup  \notag
\\  & \quad - \sum_{j=1}^n \sum_{i<j} \E A_{ij} \left[\frac{\pi_i \left\|\bm{\pi}\right\|_1 + \pi_j \left\|\bm{\pi}\right\|_1  - \left\|\bm{\pi}\right\|_2^2}{\E \left\|\bm{d}\right\|_1}
 \right]\frac{\left\|\bm{\pi}\right\|_2^2}{\left\|\bm{\pi}\right\|_1^2} \ingroup  +  \mathcal{O}\left(\errorModtotal\right) \notag
 \\  \label{aPlusb} \begin{split} \quad = & \sum_{j=1}^n \sum_{i<j}  \frac{\E A_{ij} \left(\E d_i + \E d_j - \left\|\bm{\pi}\right\|_2^2\right)}{\E \left\|\bm{d}\right\|_1} \ingroup
 \\  & \quad + \sum_{j=1}^n \sum_{i<j}  \frac{\E A_{ij} \left(\pi_i + \pi_j \right)}{\E \left\|\bm{d}\right\|_1} \ingroup
\\  & \quad - \sum_{j=1}^n \sum_{i<j} \E A_{ij} \left[\frac{\pi_i \left\|\bm{\pi}\right\|_1 + \pi_j \left\|\bm{\pi}\right\|_1  - \left\|\bm{\pi}\right\|_2^2}{\E \left\|\bm{d}\right\|_1}
 \right]\frac{\left\|\bm{\pi}\right\|_2^2}{\left\|\bm{\pi}\right\|_1^2} \ingroup    + \mathcal{O}\left(\errorModtotal\right).
\end{split}
\intertext{We will show in Step \ref{errorterm} that}
 \begin{split} & \sum_{j=1}^n \sum_{i<j}  \frac{\E A_{ij} \left(\pi_i + \pi_j \right)}{\E \left\|\bm{d}\right\|_1} \ingroup
 \\ & \quad - \sum_{j=1}^n \sum_{i<j} \E A_{ij} \left[\frac{\pi_i \left\|\bm{\pi}\right\|_1 + \pi_j \left\|\bm{\pi}\right\|_1  - \left\|\bm{\pi}\right\|_2^2}{\E \left\|\bm{d}\right\|_1}
 \right]\frac{\left\|\bm{\pi}\right\|_2^2}{\left\|\bm{\pi}\right\|_1^2} \ingroup
 \\ & = \mathcal{O}\left(\errorModtotal\right).
 \end{split}
 \label{errorMod2}
 \end{align}
 Recall from the definition of $\muMod$ in Eq.\ \eqref{muMod} that
  \begin{align}
   \muMod = & \sum_{j=1}^n
\sum_{i<j}  \frac{\E A_{ij} \left(\E d_i + \E d_j - \left\|\bm{\pi}\right\|_2^2\right)}{\E \left\|\bm{d}\right\|_1} \ingroup. \notag
\end{align}
Then, as a consequence of Eqs.\ \eqref{aPlusb} and~\eqref{errorMod2}, we see that
\begin{align}
 a) + b)= & \muMod  + \mathcal{O}\left(\epsilon\right). \label{12112015c}
\end{align}
Inserting the results from Eq.\ \eqref{12112015c} into Eq.\ \eqref{12112015d} and under the assumption that all error terms are controlled (see Step \ref{errorterm} below), we obtain the result of this lemma; i.e.,
\begin{align} \label{ModPluError}
	\widehat{Q} & = 	\sum_{j=1}^n \alpha_j \left[d_j^w - \E d_j^w \right]
   - \sum_{j=1}^n  \beta_j \left[d_j^b -\E d_j^b\right]  + \muMod + \mathcal{O}\left(\epsilon\right).
\end{align}

\underline{Step \ref{errorterm}:} We now define and address the five error terms cited above; we call these $\errorMod, \errorModTwo, \ldots, \errorModFive$.\\

\underline{Term $\errorMod$:} Recalling Eq.\ \eqref{IntroErrorMod}, we define
\begin{align*}
\errorMod & = \frac{1}{\min_l \sqrt{ \E d_l} }\sum_{j=1}^n   \ingroupPara  \left(\hat{\pi}_j - \pi_j\right)
\\ &= \frac{1}{\min_l \sqrt{ \E d_l} }\sum_{j=1}^n   \ingroupPara  \left(\frac{d_j}{\sqrt{\left\| \bm{d} \right\|_1}} - \pi_j\right).
\end{align*}
First, we apply a Taylor expansion to $\left(\left\| \bm{d} \right\|_1/ \E \left\| \bm{d} \right\|_1\right)^{-1/2} = f(x)=x^{-1/2}$ at 1, leading to
\begin{align*}
	\frac{1}{\sqrt{\left\| \bm{d} \right\|_1}} =
	\frac{1}{\sqrt{\E \left\| \bm{d} \right\|_1}}
	\left[1+ \mathcal{O}_P\left(\sqrt{\frac{\V \left\| \bm{d} \right\|_1}{\left(\E \left\| \bm{d} \right\|_1\right)^2}}\right)\right],
\end{align*}
and then control the remainder using Chebyshev's inequality. As a consequence, we obtain from Assumption \ref{over-dispersed} $\left(\V A_{ij} = \Theta\left(\E A_{ij}\right)\right)$ that
\begin{align*}
 \errorMod & = \frac{1}{\min_l \sqrt{ \E d_l} } \sum_{j=1}^n   \ingroupPara  \left( \frac{d_j \left[1+ \mathcal{O}_P\left(1/\sqrt{\E \left\| \bm{d} \right\|_1}\right)\right]}{\sqrt{\E \left\| \bm{d} \right\|_1}}  - \pi_j\right).
\intertext{
From Chebyshev's inequality and Assumptions \ref{sparse} and \ref{over-dispersed}, we know that $d_j = \E d_j +\mathcal{O}_P\left(\sqrt{\E d_j}\right)= \E d_j \left[1+\mathcal{O}_P(1/\sqrt{\E d_j})\right]$.
It follows that}
& = \frac{1}{\min_l \sqrt{ \E d_l} } \sum_{j=1}^n   \ingroupPara  \left( \frac{\E d_j \left[1+ \mathcal{O}_P\left(1/\sqrt{\E d_j} \right)\right]}{\sqrt{\E \left\| \bm{d} \right\|_1}}  - \pi_j\right)
\\ & = \frac{1}{\min_l \sqrt{ \E d_l} } \sum_{j=1}^n   \ingroupPara  \left( \pi_j
\frac{ \left[1+ \mathcal{O}_P\left( 1/\sqrt{\E d_j} \right)\right]\left[1- \pi_j/ \left\|\bm{\pi}\right\|_1\right]}{ \left[1- \left\|\bm{\pi}\right\|_2^2/ \left\|\bm{\pi}\right\|_1^2\right]^{1/2}}
  - \pi_j\right).	
 \intertext{ Since $\left\|\bm{\pi}\right\|_2^2/ \left\|\bm{\pi}\right\|_1^2 = \mathcal{O}\left(1/n\right)$ (Eq.\ \eqref{twonormoveroneneormsqu}, following from Assumption \ref{nonodeincontrol}), we can apply a convergent Taylor expansion to $f(x)=(1-x)^{-1/2}$ at 0 (as in Eq.~\eqref{15122015e}). Furthermore, the remainder term $\left(\left\|\bm{\pi}\right\|_2^2/ \left\|\bm{\pi}\right\|_1^2\right)^2$ in this Taylor expansion satisfies $\left(\left\|\bm{\pi}\right\|_2^2/ \left\|\bm{\pi}\right\|_1^2\right)^2 = \mathcal{O}\left(1/n^2\right) = \mathcal{O}\left(1/\sqrt{\E d_j} \right)$ (Assumptions \ref{nonodeincontrol} and \ref{notcomplete}). Hence, we obtain}
  & = \frac{1}{\min_l \sqrt{ \E d_l} }\sum_{j=1}^n   \ingroupPara  \Biggl( \pi_j
  \left[1+ \mathcal{O}_P\left( \frac{1}{\sqrt{\E d_j}} \right)\right]
   \left[1 - \frac{\pi_j}{ \left\|\bm{\pi}\right\|_1}\right]
\\ & \qquad \qquad \qquad \qquad \qquad \qquad \qquad \left[1 + \frac{1}{2} \left(\frac{\left\|\bm{\pi}\right\|_2^2}{ \left\|\bm{\pi}\right\|_1^2}\right)
 \right] - \pi_j\Biggr)
  \end{align*}
 \begin{align}
 \notag   & = \frac{1}{\min_l \sqrt{ \E d_l} }\sum_{j=1}^n   \ingroupPara  \left( - \frac{\pi_j^2}{ \left\|\bm{\pi}\right\|_1} + \frac{1}{2} \pi_j \frac{\left\|\bm{\pi}\right\|_2^2}{ \left\|\bm{\pi}\right\|_1^2}
 \right) \left[1+ \mathcal{O}_P\left( \frac{1}{\sqrt{\E d_j}} \right)\right]
\\ \label{errorone}  & = \frac{1}{\min_l \sqrt{ \E d_l} }\sum_{j=1}^n   \sum_{i \neq j} \pi_i \pi_j  \left( -\frac{\pi_j}{ \left\|\bm{\pi}\right\|_1} + \frac{1}{2} \frac{\left\|\bm{\pi}\right\|_2^2}{ \left\|\bm{\pi}\right\|_1^2}
 \right) \ingroup \left[1+ \mathcal{O}_P\left( \frac{1}{\sqrt{\E d_j}} \right)\right]
 \\ \label{16122015a} & = \frac{1}{\min_l \sqrt{ \E d_l} }\sum_{j=1}^n   \sum_{i \neq j} \pi_i \pi_j  \ingroup \cdot \mathcal{O}_P\left(\frac{1}{n}\right).\quad \text{(Assumption \ref{nonodeincontrol}, Eq.\ \eqref{twonormoveroneneormsqu})}\\ \notag
\end{align}

\underline{Term $\errorModTwo$:}
We now analyze the second error term. Recalling Eq.\ \eqref{seconderror}, define
\begin{align}
 \notag \errorModTwo &= \sum_{j=1}^n   \ingroupPara  \frac{d_j}{\sqrt{\E \left\| \bm{d} \right\|_1}} \frac{1}{\E \left\| \bm{d} \right\|_1}
 \intertext{From Chebyshev's inequality and Assumption \ref{over-dispersed} it follows that}
 \label{errorModtwo} & = \sum_{j=1}^n   \ingroupPara  \frac{\E d_j}{\sqrt{\E \left\| \bm{d} \right\|_1}} \frac{1}{\E \left\| \bm{d} \right\|_1}\left(1 + \mathcal{O}_P\left(\frac{1}{\sqrt{\E d_j}}\right)\right)
\end{align}
This expression is smaller than $\errorModThree$ as defined in Eq.\ \eqref{09012016}.

\underline{Term $\errorModThree$:}
We now analyze the third error term. Recalling Eq.\ \eqref{errorthree}, define
\begin{align}
 \notag \errorModThree & =
\frac{1}{2} \sum_{j=1}^n   \ingroupPara  \frac{\E d_j }{\sqrt{\E \left\| \bm{d} \right\|_1}} \left(\frac{ \left\| \bm{d} \right\|_1}{\E \left\| \bm{d} \right\|_1} - 1 \right) \frac{ 1 }{ \sqrt{\E d_j} }
\intertext{Applying Chebyshev's inequality leads to} \notag &= \frac{1}{2} \sum_{j=1}^n   \ingroupPara  \frac{\E d_j }{\sqrt{\E \left\| \bm{d} \right\|_1}} \cdot \mathcal{O}_P\left( \frac{ 1 }{ \sqrt{\E \left\| \bm{d} \right\|_1} } \right) \cdot \frac{1}{ \sqrt{\E d_j}}
\\ \label{09012016} & = \sum_{j=1}^n   \ingroupPara  \mathcal{O}_P\left(\frac{\E d_j}{\E \left\| \bm{d} \right\|_1 \sqrt{\E d_j} } \right)
\\ \notag & = \sum_{j=1}^n   \ingroupPara \mathcal{O}_P\left( \frac{ \sqrt{\E d_j}}{\E \left\| \bm{d} \right\|_1 }\right)
\\ \notag &= \sum_{j=1}^n   \ingroupPara  \pi_j \sqrt{\frac{ \pi_j \left\| \bm{\pi} \right\|_1}{\pi_j^2 \left\| \bm{\pi} \right\|_1^4 }}
\mathcal{O}_P\left(\sqrt{\frac{1- \pi_j/\left\| \bm{\pi} \right\|_1}{1-\left\| \bm{\pi} \right\|_2^2/\left\| \bm{\pi} \right\|_1^2}}\right)
\\ \notag & = \sum_{j=1}^n   \ingroupPara  \pi_j \sqrt{\frac{ \pi_j \left\| \bm{\pi} \right\|_1}{\pi_j^2 \left\| \bm{\pi} \right\|_1^4 }} \mathcal{O}_P\left(\sqrt{1 + \frac{1}{n}}\right)\quad \text{(Assumption \ref{nonodeincontrol}, Eqs.\ \eqref{twonormoveroneneormsqu},~\eqref{15122015e})}
\end{align}
\begin{align}
 \notag & = \sum_{j=1}^n   \ingroupPara  \pi_j \sqrt{\frac{ 1}{\pi_j \left\| \bm{\pi} \right\|_1 \left\| \bm{\pi} \right\|_1^2 }}
\mathcal{O}_P\left(\sqrt{1 + \frac{1}{n}}\right)
\\ \notag & = \sum_{j=1}^n   \ingroupPara  \pi_j \sqrt{\frac{1 - \pi_j/\left\| \bm{\pi} \right\|_1 }{\E d_j \left\| \bm{\pi} \right\|_1^2 }}
\mathcal{O}_P\left(\sqrt{1 + \frac{1}{n}}\right)
\\ \label{16122015c} & = \frac{\sum_{j=1} \sum_{i<j} \pi_i \pi_j \ingroup }{ \min_l \sqrt{\E d_l} \; \left\| \bm{\pi} \right\|_1} \mathcal{O}_P\left(\sqrt{1 + \frac{1}{n}}\right). \quad \text{(Assumption \ref{nonodeincontrol})}
\end{align}

\underline{Term $\errorModFour$:}
We now analyze the fourth error term. Recalling Eq.\ \eqref{15122015f}, define
\begin{align}
\errorModFour &=  -  \sum_{j=1}^n \sum_{i < j} \Biggl[ \pi_i\pi_j \;  \mathcal{O}\Biggl[\left(\frac{ \left\| \bm{\pi} \right\|_2^2}{\left\| \bm{\pi} \right\|_1^2}\right)^2\Biggr]   \Biggr] \ingroup
\\ & \quad + \frac{1}{2} \sum_{j=1}^n \sum_{i \neq j} \Biggl[ \frac{1}{2} \frac{\pi_i \pi_j^2}{\left\| \bm{\pi} \right\|_1 	 }  \frac{ \left\| \bm{\pi} \right\|_2^2}{\left\| \bm{\pi} \right\|_1^2}
+  \frac{\pi_i \pi_j^2}{\left\| \bm{\pi} \right\|_1 	 }  \;  \mathcal{O}\Biggl[\left(\frac{ \left\| \bm{\pi} \right\|_2^2}{\left\| \bm{\pi} \right\|_1^2}\right)^2\Biggr]
   \Biggr] \ingroup
\\ \label{16012016} \begin{split} &=
- \mathcal{O}\left[ \left(\frac{ \left\| \bm{\pi} \right\|_2^2}{\left\| \bm{\pi} \right\|_1^2}\right)^2 \right]
 \sum_{j=1}^n \sum_{i < j}  \pi_i\pi_j  \ingroup
\\  & \quad + \mathcal{O}\left[ \frac{ \left\| \bm{\pi} \right\|_2^2}{\left\| \bm{\pi} \right\|_1^2} \right] \sum_{j=1}^n \sum_{i \neq j} \pi_i \pi_j \frac{ \pi_j}{\left\| \bm{\pi} \right\|_1 	 }  \ingroup \end{split}
\\ \label{16122015d} & = \mathcal{O}\left(\frac{1}{n^2}\right) \sum_{j=1}^n \sum_{i<j} \pi_i \pi_j \ingroup. \quad \text{(Assumption \ref{nonodeincontrol}, Eq.\ \eqref{twonormoveroneneormsqu})}
\end{align}

\underline{Term $\errorModFive$:}
We now analyze the fifth error term. Recalling Eq.\ \eqref{errorMod2}, define
\begin{align}
 \begin{split} \label{19012016b} \errorModFive = &
 \sum_{j=1}^n \sum_{i<j}  \frac{\E A_{ij} \left(\pi_i + \pi_j \right)}{\E \left\|\bm{d}\right\|_1} \ingroup
 \\ & \quad - \sum_{j=1}^n \sum_{i<j} \E A_{ij} \left[\frac{\pi_i \left\|\bm{\pi}\right\|_1 + \pi_j \left\|\bm{\pi}\right\|_1  - \left\|\bm{\pi}\right\|_2^2}{\E \left\|\bm{d}\right\|_1}
 \right]\frac{\left\|\bm{\pi}\right\|_2^2}{\left\|\bm{\pi}\right\|_1^2} \ingroup
 \end{split}
 \\ \notag = &\sum_{j=1}^n \sum_{i<j}  \frac{\E A_{ij} \left(\pi_i + \pi_j \right)}{\E \left\|\bm{d}\right\|_1} \ingroup \left[1-\frac{\left\|\bm{\pi}\right\|_2^2}{\left\|\bm{\pi}\right\|_1}\right]
 \\ \notag & \qquad + \sum_{j=1}^n \sum_{i<j} \frac{\E A_{ij}}{\E \left\|\bm{d}\right\|_1} \ingroup
 \left(\frac{\left\|\bm{\pi}\right\|_2^2}{\left\|\bm{\pi}\right\|_1} \right)^2
\\ \notag \leq &  \frac{2 \max_l \pi_l}{\E \left\|\bm{d}\right\|_1} \sum_{j=1}^n \sum_{i<j} \E A_{ij} \ingroup
\left[1 + \mathcal{O}\left(\max_l \pi_l\right)\right]
\quad \text{(Assumption \ref{nonodeincontrol}, Eq.\ \eqref{twonormoveroneneormsqu})}
\\ \notag & \qquad + \sum_{j=1}^n \sum_{i<j} \frac{\E A_{ij}}{\E \left\|\bm{d}\right\|_1} \ingroup
 \left(\max_l \pi_l\right)^2
\end{align}
\begin{align}
 \notag = &  \frac{2 \max_l \pi_l + \mathcal{O}\left(\max_l \pi_l^2\right)}{ \left\|\bm{\pi}\right\|_1^2}
\left[1-\frac{\left\|\bm{\pi}\right\|_2^2}{\left\|\bm{\pi}\right\|_1^2}\right]^{-1}
\sum_{j=1}^n \sum_{i<j} \E A_{ij} \ingroup.
\intertext{Applying a convergent Taylor expansion to $f(x)= (1-x)^{-1}$ at 0 with $x = \left\|\bm{\pi}\right\|_2^2/\left\|\bm{\pi}\right\|_1^2$ (Assumption \ref{nonodeincontrol} and Eq.\ \eqref{twonormoveroneneormsqu}), we obtain}
 \notag = &  \frac{2 \max_l \pi_l + \mathcal{O}\left(\max_l \pi_l^2\right)}{\left\|\bm{\pi}\right\|_1^2} \left[1+\mathcal{O}\left(\frac{1}{n}\right)\right] \sum_{j=1}^n \sum_{i<j} \E A_{ij} \ingroup
\\ \label{16122015e} = & \mathcal{O} \left( \frac{1}{n \, \left\|\bm{\pi}\right\|_1} + \frac{1}{n^2}\right) \sum_{j=1}^n \sum_{i<j} \pi_i \pi_j \ingroup. \quad \text{(Assumption \ref{nonodeincontrol})}
\end{align}

As a consequence of Eqs.\ \eqref{16122015a}--\eqref{16122015e}, we now know that the error terms $\errorMod, \errorModTwo, \ldots, \errorModFive$ in our analysis of modularity satisfy
\begin{align*}
 &\errorMod + \errorModTwo + \errorModThree + \errorModFour + \errorModFive
\\ & = \mathcal{O}_P \left( \frac{1}{n \, \min_l \sqrt{\E d_l}} +\frac{1}{\left\|\bm{\pi}\right\|_1 \,\min_l \sqrt{\E d_l} } + \frac{1}{n^2} +\frac{1}{\left\|\bm{\pi}\right\|_1 n} \right) \sum_{j=1}^n \sum_{i<j} \pi_i \pi_j \ingroup.
\intertext{From Assumption \ref{notcomplete}, it follows that $\min_l \sqrt{\E d_l} = o\left(\sqrt{n^2}\right) = o(n)$. Hence, }
 & = \mathcal{O}_P \left( \frac{1}{n \, \min_l \sqrt{\E d_l}} +\frac{1}{\left\|\bm{\pi}\right\|_1 \,\min_l \sqrt{\E d_l} } \right) \sum_{j=1}^n \sum_{i<j} \pi_i \pi_j \ingroup.
\end{align*}
 Recall from Eq.\ \eqref{errorModTotal} that
\begin{align*}
	 \errorModtotal & = \frac{\sum_{j=1}^n \sum_{i<j} \pi_i \pi_j \ingroup}{\min\left(n, \left\|\bm{\pi}\right\|_1\right) \, \min_l \sqrt{\E d_l} }.
\end{align*}
It follows that
\begin{align*}
 &\errorMod + \errorModTwo + \errorModThree + \errorModFour + \errorModFive
  = \mathcal{O}_P\left(\epsilon\right).
\end{align*}

As a consequence, we conclude the required result of Lemma \ref{Modularity_reform}; i.e.,
\begin{align*}
 		\widehat{Q} &= \muMod + \left(\sum_{j=1}^n \alpha_j \left[d_j^w - \E d_j^w\right]  + \sum_{j=1}^n \beta_j \left[d_j^b - \E d_j^b\right]\right)  + \mathcal{O}_P\left(\epsilon\right).
\end{align*}
\end{proof}
We now derive the asymptotic distribution of modularity $\widehat{Q}$. Recalling the definitions of $\alpha,\beta$ in Eqs.\ \eqref{beta}, \eqref{alpha}, we define a sequence of random variables via
\begin{align} \label{mod_reform_slutsky}
 	X_n= \sum_{j=1}^n \alpha_j \left[d_j^w - \E d_j^w\right]  + \sum_{j=1}^n \beta_j \left[d_j^b - \E d_j^b\right].
\end{align}
In Lemma \ref{billinslayformod} below we show the asymptotic behavior of $X_n$. The Lemma parallels Lemma~2 in the main text.
\begin{appxlemma} \label{billinslayformod}
 		Consider Assumptions \ref{nonodeincontrol}--\ref{skewed}, and suppose that the number $K$ of communities grows strictly more slowly than $n$, so that $K/n \rightarrow 0$. Then, as $n \rightarrow \infty$,
\begin{align*}
\left(\V X_n\right) ^{-\frac{1}{2}} X_n \stackrel{d}{\rightarrow} \No(0,1).
\end{align*}
\end{appxlemma}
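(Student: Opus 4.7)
The plan is to recognise $X_n$ as a sum of independent mean-zero random variables indexed by the unordered edge pairs, and then apply the Lindeberg--Feller central limit theorem via the Lyapunov condition with $\delta = 1$, in direct analogy with the treatment of term $T_1$ in the proof of Theorem~\ref{App:GeneralCLTPoisson}.

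First I would symmetrise the two sums defining $X_n$ in Eq.~\eqref{mod_reform_slutsky}. Substituting $d_j^w - \E d_j^w = \sum_{i\neq j}(A_{ij}-\E A_{ij})\ingroup$ and the analogous identity for $d_j^b$, exchanging the order of summation, and regrouping by unordered pair $\{i,j\}$, each pair picks up the coefficient $(\alpha_i+\alpha_j)\ingroup + (\beta_i+\beta_j)\NOTingroup$, which collapses via $\alpha_k = \tfrac{1}{2}+\beta_k$ (Eq.~\eqref{alpha}) to the single expression $\ingroup+\beta_i+\beta_j$. This yields
\begin{equation*}
X_n \;=\; \sum_{i<j} c_{ij}\,(A_{ij}-\E A_{ij}), \qquad c_{ij} := \ingroup + \beta_i + \beta_j,
\end{equation*}
and, since the $A_{ij}$ for $i<j$ are independent under Definition~\ref{Def:DegreebasedModel}, this realises $X_n$ as a sum of independent centred random variables with $\V X_n = \sum_{i<j} c_{ij}^2\,\V A_{ij} = \sigMod^2$ (cf.\ Eq.~\eqref{sigMod}).

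Next I would verify Lyapunov's condition
\begin{equation*}
\frac{\sum_{i<j} |c_{ij}|^3\,\E|A_{ij}-\E A_{ij}|^3}{(\V X_n)^{3/2}} \;\longrightarrow\; 0,
\end{equation*}
exactly as was done for term $T_1$ in the proof of Theorem~\ref{App:GeneralCLTPoisson}. Assumption~\ref{over-dispersed} ($\V A_{ij}=\Theta(\E A_{ij})$) and Assumption~\ref{skewed} (bounded third central moment relative to $\V A_{ij}$), together with the non-negativity of $A_{ij}$, bound $\E|A_{ij}-\E A_{ij}|^3$ uniformly by $\mathcal{O}(\V A_{ij}\cdot\sqrt{\V A_{ij}})$ (as is already the case for Bernoulli and Poisson edges). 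Factoring $\max_{k<l}|c_{kl}|$ out of the numerator then reduces the Lyapunov ratio to $\mathcal{O}\!\left(\max_{k<l}|c_{kl}|\cdot\max_{k<l}\sqrt{\V A_{kl}}\,/\,\sqrt{\V X_n}\right)$.

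The main obstacle is the joint control of this ratio. An upper bound on $|c_{kl}|$ follows from the structure of $\beta_i$ in Eq.~\eqref{beta} as a scaled difference of two bounded ratios of expected within-group to total expected degree, multiplied by $1/\sqrt{\E\|\bm d\|_1}$; Assumptions~\ref{nonodeincontrol}--\ref{over-dispersed} then pin $|\beta_i|$ down uniformly in $i$, yielding $\max_{k<l}|c_{kl}| = \mathcal{O}(1)$. The lower bound on $\V X_n$ is where the hypothesis $K/n \to 0$ enters decisively: pigeonholing $n$ nodes into $K=o(n)$ groups forces the maximum group size, and hence by convexity the total count of within-group pairs, to diverge at the rate $\omega(n^2/K)$, so that the within-group contributions $\sum_{g(i)=g(j)}(1+\beta_i+\beta_j)^2\,\V A_{ij}$ together with Assumption~\ref{sparse} drive $\V X_n$ past $\max_{k<l}\V A_{kl}$ at a rate $\omega(1)$. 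The delicate part will be tracking these bounds uniformly across arbitrary non-random assignments satisfying $K=o(n)$; once this is in hand, Slutsky's theorem combines the Lindeberg--Feller conclusion with the standardisation to yield $(\V X_n)^{-1/2} X_n \stackrel{d}{\rightarrow} \No(0,1)$.
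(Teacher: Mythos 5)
Your overall architecture matches the paper's: you regroup $X_n$ into $\sum_{i<j} c_{ij}(A_{ij}-\E A_{ij})$ with $c_{ij}=\ingroup+\beta_i+\beta_j$, note independence and $\V X_n=\sum_{i<j}c_{ij}^2\V A_{ij}=\sigMod^2$, and verify Lyapunov with $\delta=1$; the paper does exactly this. However, your third-moment bound is wrong as stated: Assumption~\ref{skewed} controls $\E\left[(A_{ij}-\E A_{ij})^3\right]/\V A_{ij}$, not the ratio to $(\V A_{ij})^{3/2}$, and the claim $\E|A_{ij}-\E A_{ij}|^3=\mathcal{O}\left(\V A_{ij}\sqrt{\V A_{ij}}\right)$ fails precisely in the sparse regime the model permits --- for $A_{ij}\sim\operatorname{Bernoulli}(\pi_i\pi_j)$ with $\pi_i\pi_j\to 0$ the absolute third central moment is of order $\pi_i\pi_j\asymp\V A_{ij}$, which is much larger than $(\V A_{ij})^{3/2}$. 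This is repairable and is what the paper does: with $|c_{ij}|=\mathcal{O}(1)$ and third moments $\mathcal{O}(\V A_{ij})$, the Lyapunov ratio is $\mathcal{O}\bigl(1/\sqrt{\sum_{i<j}c_{ij}^2\V A_{ij}}\bigr)$, so everything hinges on showing $\V X_n\to\infty$.

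That lower bound is where your proposal has a genuine gap. Your pigeonhole argument counts within-group pairs, but it tacitly assumes the within-group weights $1+\beta_i+\beta_j$ stay bounded away from zero, which is false in general: if a single community carries almost all of the $\bm{\pi}$-mass, then for $i,j$ in that community $\E d_i^w/\E d_i\approx 1$ while $\sum_l \E d_l^w/\sum_l\E d_l\approx 1$, so $\beta_i\approx -1/2$ and $c_{ij}=1+\beta_i+\beta_j\approx 0$; the abundant within-group pairs of the dominant community (and the between-group pairs attached to it) then contribute essentially nothing to $\V X_n$. Also, since $\V A_{ij}=\Theta(\pi_i\pi_j)$, what matters is group $\pi$-mass rather than group cardinality, so a count of pairs plus $K=o(n)$ cannot by itself force $\V X_n$ past $\max_{k<l}\V A_{kl}$. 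The paper's proof is devoted precisely to this point: it shows $c_{ij}$ is, up to $\mathcal{O}(1/n)$, a function of $(g(i),g(j))$ alone, rewrites $\sum_{i<j}c_{ij}^2\V A_{ij}$ explicitly in terms of the group masses $a_k=\left\|\bm{\pi}\right\|_1^{k,\emptyset}$, and rules out cancellation of the leading terms by completing a square, obtaining $\V X_n=\Theta\left(\left\|\bm{a}\right\|_2^2\right)$; divergence then follows from $K\left\|\bm{a}\right\|_2^2\geq\left\|\bm{\pi}\right\|_1^2=\omega(n)$ (Assumption~\ref{sparse}) together with $K=o(n)$. Without an argument of this kind that controls the possible near-cancellation of $1+\beta_i+\beta_j$ uniformly over admissible group assignments, your verification of the Lyapunov condition does not go through.
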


\begin{proof}
First we write $X_n$ as a sum of independent, zero-mean random variables:
\begin{align}
	X_n &= \sum_{j=1}^n \alpha_j \left[d_j^w - \E d_j^w\right]  + \sum_{j=1}^n \beta_j \left[d_j^b - \E d_j^b\right] \notag
\\	&= \sum_{j=1}^n \sum_{i \neq j} \alpha_j \left[A_{ij} - \E A_{ij}\right] \ingroup  + \sum_{j=1}^n \sum_{i \neq j} \beta_j \left[A_{ij} - \E A_{ij}\right] \NOTingroup \notag
\\	&= \sum_{j=1}^n \sum_{i < j} \left(\alpha_i + \alpha_j\right) \left[A_{ij} - \E A_{ij}\right] \ingroup  \notag
\\ & \qquad + \sum_{j=1}^n \sum_{i < j} \left(\beta_i + \beta_j\right) \left[A_{ij} - \E A_{ij}\right] \NOTingroup \notag
\\	&= \sum_{j=1}^n \sum_{i < j} \left[\left(\alpha_i + \alpha_j\right)  \ingroup  +  \left(\beta_i + \beta_j\right)  \NOTingroup \right] \left[A_{ij} - \E A_{ij}\right] \notag
\\	&= \sum_{j=1}^n \sum_{i < j} \left[\left(1+ \beta_i + \beta_j\right)  \ingroup  +  \left(\beta_i + \beta_j\right)  \NOTingroup \right] \left[A_{ij} - \E A_{ij}\right] \notag
\\	&= \sum_{j=1}^n \sum_{i < j} \underbrace{\left[  \ingroup  +  \beta_i + \beta_j \right]}_{c_{ij}} \left[A_{ij} - \E A_{ij}\right] \label{Xn}
\\	&= \sum_{j=1}^n \sum_{i < j} c_{ij} \left[A_{ij} - \E A_{ij}\right] \label{19012016}.
\end{align}
To apply the Lindeberg--Feller Central Limit Theorem to this sum, we show:
\begin{enumerate}
	\item $\V \left(c_{ij} A_{ij}\right) < \infty$; \label{one}
	\item The Lyapunov condition for exponent $1$ is satisfied; i.e., \label{two}
				\begin{align*}
					\frac{\sum_{j = 1}^n \sum_{i<j} \E \left[\left( c_{ij} A_{ij} - \E \left(c_{ij} A_{ij}\right)\right)^3\right]}{\left[\V \left(\sum_{j = 1}^n \sum_{i<j} c_{ij} A_{ij}\right)\right]^{3/2}} \rightarrow 0.
				\end{align*}
\end{enumerate}
Since both conditions are strongly influenced by $c_{ij}$, we first show that $c_{ij} = \mathcal{O}\left(1\right)$. From Eq.\ \eqref{Xn} and the definitions of $\alpha,\beta$ in Eqs.\ \eqref{beta}, \eqref{alpha}, we see that
\begin{align*}
	c_{ij} & - \ingroup
	\\     & =  \left[ \sum_{l=1}^n \ingroupParaL  \frac{\E d_l}{\E \left\| \bm{d} \right\|_1} - \ingroupPara -\ingroupParaI \right] \frac{1}{\sqrt{\E \left\| \bm{d}\right\|_1}}
	\\     & =  \left[ \sum_{l=1}^n \frac{ \left( \left\|\bm{\pi}\right\|_1^{g(l),\emptyset} - \pi_l \right) \pi_l }{ \left\| \bm{\pi} \right\|_1} \left( \frac{ 1 - \frac{ \pi_l}{ \left\| \bm{\pi} \right\|_1} }{ 1 - \frac{ \left\| \bm{\pi} \right\|_2^2}{\left\| \bm{\pi} \right\|_1^2} } \right) - \ingroupPara -\ingroupParaI \right] \frac{\left( 1 - \frac{ \left\| \bm{\pi} \right\|_2^2}{\left\| \bm{\pi} \right\|_1^2}\right)^{-\frac{1}{2}}}{\left\| \bm{\pi} \right\|_1}.
\end{align*}
From Assumption \ref{nonodeincontrol} and Eq.\ \eqref{08102015}, we know that $\left\| \bm{\pi} \right\|_2^2/\left\| \bm{\pi} \right\|_1^2 \leq \max_{i} \pi_i \left\| \bm{\pi} \right\|_1 / \left\| \bm{\pi} \right\|_1^2 = \mathcal{O}\left(1/n\right)$. Hence, we can apply a convergent Taylor expansion to $f(x)=(1-x)^{-\alpha}, \alpha = 1/2, 1$ at $x=0$. We obtain
\begin{align}	
     & = \left[ \frac{ \sum_{k=1}^K \left( \left\|\bm{\pi}\right\|_1^{k,\emptyset} \right)^2 - \left\| \bm{\pi} \right\|_2^2 }{\left\| \bm{\pi} \right\|_1} - \ingroupPara -\ingroupParaI \right] \frac{ \left[ 1 + \mathcal{O}\left( \frac{ \max_i \pi_i}{ \left\| \bm{\pi} \right\|_1} \right) \right] }{\left\| \bm{\pi} \right\|_1} \label{05012016}
\\    & = \left[ \frac{ \sum_{k=1}^K \left( \left\|\bm{\pi}\right\|_1^{k,\emptyset} \right)^2}{\left\| \bm{\pi} \right\|_1} - \left\|\bm{\pi}\right\|_1^{g(j),\emptyset} -\left\|\bm{\pi}\right\|_1^{g(i),\emptyset} \right]
\frac{ \left[ 1 + \mathcal{O}\left( \frac{ \max_i \pi_i}{ \left\| \bm{\pi} \right\|_1} \right) \right] }{\left\| \bm{\pi} \right\|_1} \notag
\\ & \quad + \left[ \frac{\pi_j}{\left\| \bm{\pi} \right\|_1} + \frac{\pi_i}{\left\| \bm{\pi} \right\|_1} - \frac{ \left\| \bm{\pi} \right\|_2^2 }{\left\| \bm{\pi} \right\|_1^2} \right]  \left[ 1 + \mathcal{O}\left( \frac{ \max_i \pi_i}{ \left\| \bm{\pi} \right\|_1} \right) \right] \notag
\intertext{Since $\left\| \bm{\pi} \right\|_2^2/\left\| \bm{\pi} \right\|_1^2 \leq \max_{i} \pi_i \left\| \bm{\pi} \right\|_1 / \left\| \bm{\pi} \right\|_1^2 = \mathcal{O}\left(1/n\right)$, it follows further that}
    & = \left[ \frac{ \sum_{k=1}^K \left( \left\|\bm{\pi}\right\|_1^{k,\emptyset} \right)^2}{\left\| \bm{\pi} \right\|_1^2} - \frac{\left\|\bm{\pi}\right\|_1^{g(j),\emptyset}}{\left\| \bm{\pi} \right\|_1} - \frac{\left\|\bm{\pi}\right\|_1^{g(i),\emptyset}}{\left\| \bm{\pi} \right\|_1} \right]
\left[ 1 + \mathcal{O}\left( \frac{ 1}{ n} \right) \right]
 + \mathcal{O}\left( \frac{ 1}{n}\right). \label{17122015a}
\end{align}
The first term in Eq.\ \eqref{17122015a} is $\mathcal{O}(1)$, and thus we conclude $c_{ij} = \mathcal{O}(1)$. This in turn allows us to combine the relative and additive error terms. Furthermore we see that $c_{ij}$ is, up to an additive error term of order at most $1/n$, a function only of $g(i)$ and $g(j)$:
\begin{equation}
 c_{ij} = \ingroup + \sum_{k=1}^K \left( \frac{ \left\|\bm{\pi}\right\|_1^{k,\emptyset} }{ \left\| \bm{\pi} \right\|_1 } \right)^2 - \frac{ \left\|\bm{\pi}\right\|_1^{g(i),\emptyset} }{ \left\| \bm{\pi} \right\|_1 } - \frac{ \left\|\bm{\pi}\right\|_1^{g(j),\emptyset} }{ \left\| \bm{\pi} \right\|_1 } + \mathcal{O}\left( \frac{1}{n} \right). \label{30112015aabb}
\end{equation}

  We are now ready to address the two conditions sufficient for the Lindeberg-Feller Central Limit Theorem.

\underline{Condition \ref{one}:}
\begin{align*}
	\V \left(c_{ij} A_{ij}\right) &= c_{ij}^2 \V \left(A_{ij}\right)
	\\ &= c_{ij}^2 \Theta \left(\pi_i \pi_j\right) \quad \text{(Assumption \ref{over-dispersed})}
	\\ &< \infty.  \quad  \left(\text{Eq.\ \eqref{30112015aabb}: } c_{ij} = \mathcal{O}(1); \; \pi_i, \pi_j \in \R_{>0} \right)
\end{align*}

\underline{Condition \ref{two}:}
\begin{align*}
	&\frac{\sum_{j = 1}^n \sum_{i<j} \E \left[\left( c_{ij} A_{ij} - \E \left(c_{ij} A_{ij}\right)\right)^3\right]}{\left[\V \left(\sum_{j = 1}^n \sum_{i<j} c_{ij} A_{ij}\right)\right]^{3/2}}
\\ &	= \frac{\sum_{j = 1}^n \sum_{i<j} c_{ij}^3 \E \left[\left(  A_{ij} - \E \left( A_{ij}\right)\right)^3\right]}{\left[
\sum_{j = 1}^n \sum_{i<j} c_{ij}^2 \V  A_{ij}\right]^{3/2}}
\\ &	= \mathcal{O}\left(1\right) \cdot \frac{\sum_{j = 1}^n \sum_{i<j} c_{ij}^2 \E \left[\left(  A_{ij} - \E \left( A_{ij}\right)\right)^3\right]}{\left[
\sum_{j = 1}^n \sum_{i<j} c_{ij}^2 \V  A_{ij}\right]^{3/2}} \quad  \left(\text{Eq.\ \eqref{30112015aabb}: } c_{ij} = \mathcal{O}(1) \right)
\\ &	= \mathcal{O}\left(\frac{\sum_{j = 1}^n \sum_{i<j} c_{ij}^2 \V A_{ij} }{\left[
\sum_{j = 1}^n \sum_{i<j} c_{ij}^2 \V  A_{ij}\right]^{3/2}}\right) \quad \text{(Assumption \ref{skewed})}
\\ &	= \mathcal{O} \left(\frac{1 }{\left[
\sum_{j = 1}^n \sum_{i<j} c_{ij}^2 \V  A_{ij}\right]^{1/2}}\right).   \left(\text{Eq.\ \eqref{30112015aabb}: } c_{ij} = \mathcal{O}(1) \right).
\end{align*}

For Condition \ref{two}, it remains to show that $ \sum_{j = 1}^n \sum_{i<j} c_{ij}^2 \V  A_{ij} \rightarrow \infty$:
\begin{align}
 \notag	\sum_{j = 1}^n & \sum_{i<j} c_{ij}^2 \V  A_{ij}
 = \sum_{j = 1}^n \sum_{i<j} c_{ij}^2 \Theta\left( \pi_i \pi_j\right) \quad \text{(Assumption \ref{over-dispersed})}
 \\ \notag &= \frac{1}{2} \left[ \sum_{i = 1}^n \sum_{j = 1}^n c_{ij}^2 \Theta\left( \pi_i \pi_j\right) - \sum_{i = 1}^n c_{ii}^2 \Theta\left( \pi_i^2 \right) \right]
 \\ &= \frac{1}{2} \left[ \sum_{k = 1}^K \sum_{t = 1}^K c_{tk}^2 \Theta\left( \left\|\bm{\pi}\right\|^{k,\emptyset}_1  \left\|\bm{\pi}\right\|^{t,\emptyset}_1 \right) + \mathcal{O}\left( \left\|\bm{\pi}\right\|_2^2 \right) \right]. \quad  \left(\text{Eq.\ \eqref{30112015aabb}: } c_{ij} = \mathcal{O}(1) \right) \label{10122015a}
\end{align}
Recall from Eq.\ \eqref{30112015aabb} that $c_{ij}$ can be written as a function of $g(i)$ and $g(j)$:
\begin{align*}
c_{tk} & =  \delta_{t=k}	  	
	 +\underbrace{   \frac{1}{\left\| \bm{\pi} \right\|_1}
	      \left[ \sum_{l=1}^K  \frac{\left(\left\|\bm{\pi}\right\|^{l,\emptyset}_1 \right)^2    }{ \left\| \bm{\pi} \right\|_1}
	      - \left\|\bm{\pi}\right\|^{t,\emptyset}_1 - \left\|\bm{\pi}\right\|^{k,\emptyset}_1 \right]}_{B}
	      + \mathcal{O}\left( \frac{1}{n} \right)
 \\ \Rightarrow c_{tk}^2 & = \delta_{t=k} + 2\delta_{t=k}B + B^2 + \mathcal{O}\left( \frac{1}{n} \right). \quad  \left(\text{Eq.\ \eqref{30112015aabb}: } c_{ij} = \mathcal{O}(1) \right)
\end{align*}
Then, substituting $a_k$ for $\left\|\bm{\pi}\right\|^{k,\emptyset}_1$ in Eq.\ \eqref{10122015a} (so that $ \left\| \bm{a} \right\|_1 = \left\| \bm{\pi} \right\|_1 $), we obtain
\begin{align}
 \sum_{k=1}^K \sum_{t=1}^K c_{tk}^2 \Theta\left( a_k a_t \right) & =  \sum_{k=1}^K \sum_{t=1}^K
		 \left[\delta_{k=t} + 2 \delta_{k=t} B + B^2 + \mathcal{O}\left( \frac{1}{n} \right) \right]
		    \Theta\left( a_k a_t \right) \notag
\\ 	& = \sum_{k=1}^K (1+2B) \Theta\left( a_k^2 \right)
+ \sum_{k=1}^K \sum_{t=1}^K \left[ B^2 + \mathcal{O}\left( \frac{1}{n} \right) \right]
		    \Theta\left( a_k a_t \right) \label{30112015c}	.
\end{align}
We now address the two terms on the right-hand side of Eq.\ \eqref{30112015c} separately:
\begin{align}
	\sum_{k=1}^K & (1+2B) a_k^2
	= \left\| \bm{a} \right\|_2^2 + \frac{2}{\left\| \bm{a} \right\|_1} \sum_{k=1}^K \left( \sum_{l=1}^K
	        \frac{a_l^2}{ \left\| \bm{a} \right\|_1}
	       - 2 a_k \right) a_k^2\notag
\\ 	       & \qquad \qquad \,\,\,\,\, = \left\| \bm{a} \right\|_2^2 +  2\frac{\left\| \bm{a} \right\|_2^4}{\left\| \bm{a} \right\|_1^2}
	       - 4 \frac{\left\| \bm{a} \right\|_3^3}{\left\| \bm{a} \right\|_1} . \label{30112015}
\\\sum_{k=1}^K & \sum_{t=1}^K \left[ B^2 + \mathcal{O}\left( \frac{1}{n} \right) \right]
		    a_k a_t \notag
\\		   &  = \sum_{k=1}^K \sum_{t=1}^K   \left\{\frac{1}{\left\| \bm{a} \right\|_1}
 \left[ \sum_{l=1}^K
	        \frac{\left(a_l\right)^2}{ \left\| \bm{a} \right\|_1}
	       - a_k - a_t \right]\right\}^2
		    a_k a_t	+ \mathcal{O}\left( \frac{\left\| \bm{a} \right\|_1^2}{n} \right) \notag
\\ 	   &  = \frac{1}{\left\| \bm{a} \right\|_1^2} \sum_{k=1}^K \sum_{t=1}^K   \left[\frac{\left\| \bm{a} \right\|_2^2}{ \left\| \bm{a} \right\|_1}
	       - \left(a_k + a_t\right) \right]^2
		    a_k a_t + \mathcal{O}\left( \frac{1}{n} \right) + \mathcal{O}\left( \frac{\left\| \bm{a} \right\|_1^2}{n} \right) \notag
\end{align}
\begin{align}
 	   &  = \frac{1}{\left\| \bm{a} \right\|_1^2} \sum_{k=1}^K \sum_{t=1}^K   \left[ \left(\frac{\left\| \bm{a} \right\|_2^2}{ \left\| \bm{a} \right\|_1}\right)^2
	       - 2 \frac{\left\| \bm{a} \right\|_2^2}{ \left\| \bm{a} \right\|_1}
	       \left(a_k + a_t\right)
	       +\left(a_k + a_t\right)^2 \right]
		    a_k a_t	+ \mathcal{O}\left( \frac{\left\| \bm{a} \right\|_1^2}{n} \right) \notag	
 	\\ 	   &  = \frac{1}{\left\| \bm{a} \right\|_1^2}
	\left[ \left\| \bm{a} \right\|_2^4
	- 2  \left\| \bm{a} \right\|_2^4
	+ \sum_{k=1}^K \sum_{t=1}^K
	      \left(a_k^2 + 2 a_k a_t + a_t^2\right)
		    a_k a_t \right] + \mathcal{O}\left( \frac{\left\| \bm{a} \right\|_1^2}{n} \right) \notag
\\ 	   &  = \frac{1}{\left\| \bm{a} \right\|_1^2}
	\left[ \left\| \bm{a} \right\|_2^4
	- 2  \left\| \bm{a} \right\|_2^4
	+ 2 \left\| \bm{a} \right\|_3^3  \left\| \bm{a} \right\|_1
	+ 2 \left\| \bm{a} \right\|_2^4 \right] + \mathcal{O}\left( \frac{\left\| \bm{a} \right\|_1^2}{n} \right) \notag
\\ 	   &  = \frac{1}{\left\| \bm{a} \right\|_1^2}
	\left[ \left\| \bm{a} \right\|_2^4
	+ 2 \left\| \bm{a} \right\|_3^3  \left\| \bm{a} \right\|_1 \right] + \mathcal{O}\left( \frac{\left\| \bm{a} \right\|_1^2}{n} \right). \label{30112015b} 	 \end{align}

Thus, substituting Eqs.\ \eqref{30112015} and~\eqref{30112015b} into Eq.\ \eqref{10122015a}, we obtain
\begin{align}
\sum_{j = 1}^n & \sum_{i<j} c_{ij}^2 \V  A_{ij}
 = \Theta\left( \left\| \bm{a} \right\|_2^2 + 3\frac{\left\| \bm{a} \right\|_2^4}{\left\| \bm{a} \right\|_1^2}
	       - 2 \frac{\left\| \bm{a} \right\|_3^3}{\left\| \bm{a} \right\|_1} \right) + \mathcal{O}\left( \frac{\left\| \bm{a} \right\|_1^2}{n} + \left\| \bm{\pi} \right\|_2^2 \right) \label{01122015}
\\ & = \left\|\bm{a}\right\|_2^2 \left[ \Theta\left(  1
 	+ 3 \frac{\left\| \bm{a} \right\|_2^2}{\left\| \bm{a} \right\|_1^2}
	       - 2 \frac{\left\|\bm{a}\right\|_2\left\| \bm{a} \right\|_3^3}{\left\| \bm{a} \right\|_1\left\|\bm{a}\right\|_2^3} \right) + \mathcal{O}\left( \frac{\left\| \bm{a} \right\|_1^2}{\left\| \bm{a} \right\|_2^2} \left\{ \frac{1}{n} + \frac{\left\| \bm{\pi} \right\|_2^2}{\left\| \bm{a} \right\|_1^2} \right\} \right)    \right]. \notag
\intertext{Since $\left\| \bm{a} \right\|_1 = \left\| \bm{\pi} \right\|_1$ and $\left\| \bm{a} \right\|_1^2/\left\| \bm{a} \right\|_2^2 \leq K$, it follows that}	
  & = \left\|\bm{a}\right\|_2^2 \left[\Theta\left(  1
 	+ 3 \frac{\left\| \bm{a} \right\|_2^2}{\left\| \bm{a} \right\|_1^2}
	       - 2 \frac{\left\|\bm{a}\right\|_2\left\| \bm{a} \right\|_3^3}{\left\| \bm{a} \right\|_1\left\|\bm{a}\right\|_2^3} \right) + \mathcal{O}\left( K \left\{ \frac{1}{n} + \frac{\left\| \bm{\pi} \right\|_2^2}{\left\| \bm{\pi} \right\|_1^2} \right\} \right)     \right]\notag
\\  & = \left\|\bm{a}\right\|_2^2 \left[ \Theta\left(  1
 	+ 3 \frac{\left\| \bm{a} \right\|_2^2}{\left\| \bm{a} \right\|_1^2}
	       - 2 \frac{\left\|\bm{a}\right\|_2\left\| \bm{a} \right\|_3^3}{\left\| \bm{a} \right\|_1\left\|\bm{a}\right\|_2^3} \right) + \mathcal{O}\left( \frac{K}{n} \right) \right] \quad \left( \text{Assumption \ref{nonodeincontrol}} \right)\label{14122015}
  \\ & \geq  \left\|\bm{a}\right\|_2^2 \left[\Theta\left( 1
 	+ 3 \frac{\left\| \bm{a} \right\|_2^2}{\left\| \bm{a} \right\|_1^2}
	       - 2 \frac{\left\|\bm{a}\right\|_2}{\left\| \bm{a} \right\|_1}\right) + \mathcal{O}\left( \frac{K}{n} \right)   \right]\notag
  \\ & = \left\|\bm{a}\right\|_2^2 \left[ \Theta\left( \left[ \sqrt{3} \frac{\left\|\bm{a}\right\|_2}{\left\| \bm{a} \right\|_1} - \frac{1}{\sqrt{3}} \right]^2
 	+ \frac{2}{3} \right) + \mathcal{O}\left( \frac{K}{n} \right) \right] \notag
  \\ & = \Theta\left( \left\|\bm{a}\right\|_2^2 \right) . \quad \left( K = o(n) \right) \label{1612215m}
\end{align}
Furthermore, from Eq.\ \eqref{14122015} we obtain that
\begin{align}
\sum_{j = 1}^n \sum_{i<j} c_{ij}^2 \V  A_{ij}
 & \leq \left\|\bm{a}\right\|_2^2 \left[ \Theta\left( 1
 	+ 3 \frac{\left\| \bm{a} \right\|_2^2}{\left\| \bm{a} \right\|_1^2} \right)
	+ \mathcal{O}\left( \frac{K}{n} \right) \right] \label{1612215l}
\intertext{and thus, since $\left\| \bm{a} \right\|_2^2 \leq \left\| \bm{a} \right\|_1^2$, we conclude from Eqs.\ \eqref{1612215m} and~\eqref{1612215l} that whenever $K = o(n)$,}
  \sum_{j = 1}^n \sum_{i<j} c_{ij}^2 \V  A_{ij} & = \Theta\left( \left\|\bm{a}\right\|_2^2 \right) . \label{VarXn16}
\end{align}

Now, since by hypothesis $ \left\| \bm{\pi} \right\|_1 \to \infty $, and by construction $ \left\| \bm{a} \right\|_1 = \left\| \bm{\pi} \right\|_1 $, we see immediately that
\begin{align*}
 \left\|\bm{a}\right\|_2^2 &\geq \frac{\left\|\bm{\pi}\right\|_1^2}{K} \quad \left(K \left\|\bm{a}\right\|_2^2 \geq \left\|\bm{a}\right\|_1^2\right)
 \\ &= \omega \left(\frac{n}{K}\right) \quad \text{(Assumption \ref{sparse})}
 \\ &= \omega \left(1\right). \quad \left( K = o(n) \right)
\end{align*}
Thus the Lyapunov condition is satisfied, and we obtain the claimed result that
\begin{align*}
\left(\V X_n\right) ^{-\frac{1}{2}} X_n \stackrel{d}{\rightarrow} \No(0,1)
\end{align*}
via the Lindeberg--Feller Central Limit Theorem.
\end{proof}

Combining Lemma \ref{Modularity_reform} and Eq.\ \eqref{mod_reform_slutsky}, we obtain that modularity $\widehat{Q}$ satisfies
\begin{align}
 \notag		\widehat{Q} &= \muMod + X_n  + \mathcal{O}_p\left(\errorModtotal\right)
 \\	\label{16122015h}	\Rightarrow \quad  \left(\V X_n\right)^{- \frac{1}{2}} \left(\widehat{Q} - \muMod\right) &=  \left(\V X_n\right)^{- \frac{1}{2}} X_n  +  \left(\V X_n\right)^{- \frac{1}{2}}\mathcal{O}_p\left(\errorModtotal\right).
\end{align}
We know from Lemma \ref{billinslayformod} that
\begin{align*}
	\left(\V X_n\right)^{- \frac{1}{2}} X_n \stackrel{d}{\rightarrow}  \No\left(0,1\right).
\end{align*}
Now, we will show that
\begin{align*}
	\left(\V X_n\right)^{- \frac{1}{2}}\errorModtotal \stackrel{n}{\rightarrow} 0.
\end{align*}
As in Lemma \ref{billinslayformod},
define
\begin{align*}
a_k=\left\|\bm{\pi}\right\|^{k,\emptyset}_1 = \sum_{i=1}^n \pi_i \delta_{g(i)=k},
\end{align*}
whence
\begin{align*}
\sum_{j=1}^n \sum_{i<j} \pi_i \pi_j \ingroup \leq \frac{1}{2} \left\|\bm{a}\right\|_2^2.
\end{align*}
Using this notation, we have from Eqs.\ \eqref{errorModTotal} and~\eqref{VarXn16} that \begin{align*}
0 \leq \errorModtotal \leq \frac{\left\|\bm{a}\right\|_2^2}{\min\left(n, \left\|\bm{\pi}\right\|_1\right) \, \min_l \sqrt{ \E d_l }}
\end{align*}
and $\V X_n = \Theta\left(\left\|\bm{a}\right\|_2^2\right)$, respectively. It follows that
\begin{align}
\notag	\left(\V X_n\right)^{- \frac{1}{2}}\errorModtotal & = \mathcal{O}\left( \left\|\bm{a}\right\|_2^{-1} \frac{\left\|\bm{a}\right\|_2^2}{\min\left(n, \left\|\bm{\pi}\right\|_1\right) \, \min_l \sqrt{ \E d_l }} \right)
\\ \notag	& = \mathcal{O}\left(  \sqrt{\frac{\left\|\bm{a}\right\|_2^2}{\min\left(n^2, \left\|\bm{\pi}\right\|_1^2\right) \, \min_l  \E d_l } } \right)
\\ \notag	& = \mathcal{O} \left(  \sqrt{\frac{\left\|\bm{\pi}\right\|_1}{\min\left(n^2, \left\|\bm{\pi}\right\|_1^2\right) \min_l \pi_l} } \right) \quad \text{(Assumption \ref{nonodeincontrol})}
\\	\notag & = o \left(  \sqrt{\frac{\left\|\bm{\pi}\right\|_1}{\min\left(n^{3/2}, n^{-1/2}\left\|\bm{\pi}\right\|_1^2\right)} } \right)
\quad \text{(Assumption \ref{sparse})}
\\ \label{16122015i}	& = o \left(1 \right).
\quad \text{(Assumption \ref{sparse} and \ref{notcomplete})}
\end{align}

We are now ready to complete the proof of Theorem \ref{DensityM}. Observe from~\eqref{sigMod} and~\eqref{Xn} that $\sigMod$ as defined in the statement of Theorem \ref{DensityM} satisfies
\begin{align*}
\sigMod^2 = \V X_n.
\end{align*}
Combining the results from Eqs.\ \eqref{16122015h},~\eqref{16122015i} and Lemma \ref{billinslayformod} using Slutsky's Theorem, we conclude the overall result of this theorem; i.e.,
\begin{align*}
	 \frac{\widehat{Q} - \muMod}{\sigMod} \stackrel{d}{\rightarrow} \No\left(0,1\right).
\end{align*}
\end{proof}
 	
\section{Proof of Theorem 3}
To add interpretability to the coefficients $\bm{\alpha}=0.5 + \bm{\beta}$ and $\bm{\beta}$ for the decomposition of modularity in Theorem 3 in the main text, we change their formulation from the one in Lemma \ref{Modularity_reform} in the proof of Theorem \ref{DensityM} (see Eq. \eqref{betaRepeat} below)
to $\beta_j^*$ in Eq. \ref{betaStar} below. By doing so, we add an error term that asymptotically wears off. More formally, we obtain the following corollary.

\begin{appxcorollary} \label{APP:LemmabetaStar}
Consider Assumptions \ref{nonodeincontrol}--\ref{over-dispersed} ($\pi_i/ \left\|\bm{\pi}\right\|_1= \mathcal{O}\left(1/ n\right)$, $\pi_i = \omega\left(1/ \sqrt{n}\right)$, $\pi_i = o\left(\sqrt{n}\right), \E A_{ij}=\Theta\left(\V A_{ij}\right)$). Then, the following identity holds:
\begin{align}
\notag 		\widehat{Q}-\muMod
 	&= \left(\sum_{j=1}^n \alpha_j^* \left[d_j^w - \E d_j^w\right]  + \sum_{j=1}^n \beta_j^* \left[d_j^b - \E d_j^b\right]\right)  + \mathcal{O}_P\left(\epsilon\right)
 	\intertext{with $\alpha_j^* = 0.5 + \beta_j^*$ and}
 	 \beta_j^* &=\frac{\sum_{l=1}^n \E d_l^w}{2 \sum_{l=1}^n \E d_l} - \frac{ \E d_j^w}{\E d_j}. \label{betaStar}
\end{align}
\end{appxcorollary}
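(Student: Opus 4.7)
The plan is to derive the $\beta_j^*$-based decomposition from the $\beta_j$-based decomposition already established in Lemma~\ref{Modularity_reform}, by showing that swapping $\beta_j$ for $\beta_j^*$ (and consequently $\alpha_j$ for $\alpha_j^* = 1/2 + \beta_j^*$) perturbs the decomposition by only $\mathcal{O}_P(\epsilon)$. Because $\alpha_j - \alpha_j^* = \beta_j - \beta_j^*$ and $[d_j^w - \E d_j^w] + [d_j^b - \E d_j^b] = d_j - \E d_j$, the discrepancy between the two decompositions telescopes to the single remainder
\begin{equation*}
R_n = \sum_{j=1}^n (\beta_j - \beta_j^*)\,(d_j - \E d_j),
\end{equation*}
so it suffices to prove $R_n = \mathcal{O}_P(\epsilon)$.

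The first step is to quantify $\beta_j - \beta_j^*$. Substituting $\E d_l^w = \pi_l \ingroupParaL$, $\E d_l = \pi_l(\left\|\bm{\pi}\right\|_1 - \pi_l)$, and $\E \left\|\bm{d}\right\|_1 = \left\|\bm{\pi}\right\|_1^2 - \left\|\bm{\pi}\right\|_2^2$, each of $\beta_j$ and $\beta_j^*$ decomposes into the same two leading-order pieces; they differ only in whether the normalising denominators are $\sqrt{\E \left\|\bm{d}\right\|_1}$ and $\left\|\bm{\pi}\right\|_1 - \pi_j$ (in $\beta_j$) or simply $\left\|\bm{\pi}\right\|_1$ (in $\beta_j^*$). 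Applying convergent Taylor expansions of $(1-x)^{-1/2}$ and $(1-x)^{-1}$ at $0$ with $x = \left\|\bm{\pi}\right\|_2^2/\left\|\bm{\pi}\right\|_1^2$ or $x = \pi_j/\left\|\bm{\pi}\right\|_1$ (both $\mathcal{O}(1/n)$ by Assumption~\ref{nonodeincontrol} and Eq.~\eqref{twonormoveroneneormsqu}), the matched pairs agree to relative order $1/n$. Writing $S = \sum_{l=1}^n \pi_l \ingroupParaL = 2 \sum_{j=1}^n \sum_{i<j} \pi_i \pi_j \ingroup$, this yields
\begin{equation*}
\beta_j - \beta_j^* = \mathcal{O}\!\left( \frac{1}{n} \left[ \frac{\ingroupPara}{\left\|\bm{\pi}\right\|_1} + \frac{S}{\left\|\bm{\pi}\right\|_1^2} \right] \right).
\end{equation*}

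Next I would rewrite $R_n$ as a mean-zero sum over independent edges,
\begin{equation*}
R_n = \sum_{j=1}^n \sum_{i<j} \bigl[(\beta_i - \beta_i^*) + (\beta_j - \beta_j^*)\bigr]\left(A_{ij} - \E A_{ij}\right),
\end{equation*}
and bound $\V R_n$ by the squared coefficients weighted by $\V A_{ij} = \Theta(\pi_i \pi_j)$ (Assumption~\ref{over-dispersed}), invoking Chebyshev's inequality to stochastically control $R_n$. The main obstacle will be matching this variance to the precise form of $\epsilon = \sum_{j}\sum_{i<j} \pi_i \pi_j \ingroup / \left(\min(n, \left\|\bm{\pi}\right\|_1) \min_l \sqrt{\E d_l}\right)$: a crude global bound $\beta_j - \beta_j^* = \mathcal{O}(1/n)$ gives only $R_n = \mathcal{O}_P(\left\|\bm{\pi}\right\|_1/n)$, which is too loose when the group assignment yields a small $\sum_{j}\sum_{i<j} \pi_i \pi_j \ingroup$ (for example, many tiny groups). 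To recover the required rate I would retain the refined coordinatewise bound above, in which $\beta_j - \beta_j^*$ is proportional to $\ingroupPara/\left\|\bm{\pi}\right\|_1$ (nonzero only when $j$ has within-group partners) and to $S/\left\|\bm{\pi}\right\|_1^2$; squaring and summing over edges then concentrates the variance on within-group pairs and naturally reintroduces the factor $\sum_{j}\sum_{i<j} \pi_i \pi_j \ingroup$ appearing in the numerator of $\epsilon$. Verifying under Assumptions~\ref{nonodeincontrol}--\ref{over-dispersed} that the resulting variance estimate is dominated by $\epsilon^2$ then delivers $R_n = \mathcal{O}_P(\epsilon)$ and completes the argument.
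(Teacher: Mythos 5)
Your proposal is correct, and its first half coincides with the paper's own argument: the paper likewise compares $\beta_j$ with $\beta_j^*$ via Taylor expansions in $\left\|\bm{\pi}\right\|_2^2/\left\|\bm{\pi}\right\|_1^2$ and $\pi_j/\left\|\bm{\pi}\right\|_1$, obtaining $\beta_j = \beta_j^*\left[1+\mathcal{O}(1/n)\right]$, which is exactly your additive bound $\beta_j-\beta_j^* = \mathcal{O}\bigl(\tfrac{1}{n}\bigl[\ingroupPara/\left\|\bm{\pi}\right\|_1 + S/\left\|\bm{\pi}\right\|_1^2\bigr]\bigr)$, and it reduces the discrepancy to the same telescoped remainder $R_n=\sum_j(\beta_j-\beta_j^*)(d_j-\E d_j)$. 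Where you genuinely diverge is in controlling $R_n$: the paper bounds it term by term, replacing each $|d_j-\E d_j|$ by $\mathcal{O}_P(\sqrt{\E d_j})$ via Chebyshev and summing magnitudes (an $L^1$/triangle-inequality bound that forgoes cancellation), which lands directly on $\tfrac{1}{n\min_l\sqrt{\E d_l}}\sum_j\sum_{i\neq j}\pi_i\pi_j\ingroup = \mathcal{O}(\errorModtotal)$; you instead expand $R_n$ over independent edges, compute its variance, and apply Chebyshev once, exploiting cancellation. Your route is in principle sharper but needs the extra verification you defer; it does go through: $\V R_n = \Theta\bigl(\sum_j(\beta_j-\beta_j^*)^2\,\E d_j\bigr)$, and using $\sum_j\pi_j\bigl(\ingroupPara\bigr)^2 \leq S\,\max_j\ingroupPara \leq S^2/\min_l\pi_l$ together with $\min_l\E d_l = \Theta\bigl(\min_l\pi_l\left\|\bm{\pi}\right\|_1\bigr)$ and $\min(n,\left\|\bm{\pi}\right\|_1)\leq n$, one gets $\V R_n = \mathcal{O}(\errorModtotal^2)$ (the $S^2/\left\|\bm{\pi}\right\|_1^2$ piece is smaller still, by a factor $\min_l\pi_l/\left\|\bm{\pi}\right\|_1$), hence $R_n=\mathcal{O}_P(\errorModtotal)$. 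So the two approaches buy slightly different things: the paper's is shorter and reuses its stock of per-degree Chebyshev bounds, while yours makes the dependence of the error on the within-group mass explicit and would tolerate coefficients for which the signed sum is much smaller than the sum of magnitudes; for the stated corollary both yield the same rate.
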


\begin{proof}
Recall from Lemma \ref{Modularity_reform} in the proof of Theorem \ref{DensityM} that
\begin{align}
 	\notag	\widehat{Q} &= \muMod + \left(\sum_{j=1}^n \alpha_j \left[d_j^w - \E d_j^w\right]  + \sum_{j=1}^n \beta_j \left[d_j^b - \E d_j^b\right]\right)  + \mathcal{O}_P\left(\epsilon\right)
 		\intertext{where}
 	\beta_j & = \left[ \frac{1}{2} \sum_{l=1}^n   \ingroupParaL  \frac{\E d_l}{\E \left\| \bm{d} \right\|_1} -  \ingroupPara\right]  \frac{1}{\sqrt{\E \left\| \bm{d} \right\|_1}}. \label{betaRepeat}
\end{align}
We first address how $\beta_j$ and $\beta_j^*$ relate:
\begin{align}
 \notag	\beta_j & =  	\left[ \frac{1}{2} \sum_{l=1}^n   \ingroupParaL  \frac{\E d_l}{\E \left\| \bm{d} \right\|_1} - \ingroupPara\right] \frac{1}{\sqrt{\E \left\| \bm{d} \right\|_1}}
	\\ \notag & =  	\left[ \frac{\sum_{l=1}^n \sum_{m<l} \E A_{lm} \ingrouplm}{\sqrt{\E \left\| \bm{d} \right\|_1}}  \frac{\left\|\bm{\pi}\right\|_1 \left(1 - \pi_l/\left\|\bm{\pi}\right\|_1\right)}{\sqrt{\E \left\| \bm{d} \right\|_1}} - \ingroupPara\right] \frac{1}{\sqrt{\E \left\| \bm{d} \right\|_1}}
		\\ \notag  & =  	\left[ \frac{\sum_{l=1}^n \sum_{m<l} \E A_{lm} \ingrouplm}{2 \sum_{l=1}^n \sum_{m<l} \E A_{lm}}  \frac{\left\|\bm{\pi}\right\|_1 \left(1 - \pi_l/\left\|\bm{\pi}\right\|_1\right)}{\left\|\bm{\pi}\right\|_1 } - \frac{\ingroupPara}{\left\|\bm{\pi}\right\|_1}\right]
\\ & \notag \qquad \cdot		\frac{1}{\sqrt{1 - \left\|\bm{\pi}\right\|_2^2/\left\|\bm{\pi}\right\|_1^2}}
\end{align}
From Assumption \ref{nonodeincontrol} and Eq.\ \eqref{08102015}, we know that $\left\| \bm{\pi} \right\|_2^2/\left\| \bm{\pi} \right\|_1^2 \leq \max_{i} \pi_i \left\| \bm{\pi} \right\|_1 / \left\| \bm{\pi} \right\|_1^2 = \mathcal{O}\left(1/n\right)$. Hence, we can apply a convergent Taylor expansion to $f(x)=(1-x)^{-1/2}$ at $x=0$. We obtain
\begin{align}
 \notag		 & =  	\left[ \frac{\sum_{l=1}^n \sum_{m<l} \E A_{lm} \ingrouplm}{2\sum_{l=1}^n \sum_{m<l} \E A_{lm}}
		  - \frac{\ingroupPara}{\left\|\bm{\pi}\right\|_1}\right]	
		 \left[ 1 + \mathcal{O}\left( \frac{ \max_i \pi_i}{ \left\| \bm{\pi} \right\|_1} \right) \right]
		 \\ \notag		 & =  	\left[ \frac{\sum_{l=1}^n \sum_{m<l} \E A_{lm} \ingrouplm}{2\sum_{l=1}^n \sum_{m<l} \E A_{lm}}
		  - \frac{\pi_j \ingroupPara}{\pi_j \left\|\bm{\pi}\right\|_1 \left(1-\pi_j/ \left\|\bm{\pi}\right\|_1\right)}\right]
		  \left(1- \frac{\pi_j}{ \left\|\bm{\pi}\right\|_1}\right)	
\\ & \notag \qquad \cdot \left[ 1 + \mathcal{O}\left( \frac{ \max_i \pi_i}{ \left\| \bm{\pi} \right\|_1} \right) \right]
		\\ 	\notag	 & =  	\left[ \frac{\sum_{l=1}^n \sum_{m<l} \E A_{lm} \ingrouplm}{2\sum_{l=1}^n \sum_{m<l} \E A_{lm}}
		  - \frac{\pi_j \ingroupPara}{\pi_j \left\|\bm{\pi}\right\|_1 \left(1-\pi_j/ \left\|\bm{\pi}\right\|_1\right)}\right]
		 \left[ 1 + \mathcal{O}\left( \frac{ \max_i \pi_i}{ \left\| \bm{\pi} \right\|_1} \right) \right]
		 		\\ \notag		 & =  	\left[ \frac{\sum_{l=1}^n \E d_l^w}{2 \sum_{l=1}^n \E d_l}
		  - \frac{ \E d_j^w}{\E d_j}\right]
		 \left[ 1 + \mathcal{O}\left( \frac{ \max_i \pi_i}{ \left\| \bm{\pi} \right\|_1} \right) \right]
\\ 	\notag	 & =  	\left[ \frac{\sum_{l=1}^n \E d_l^w}{2 \sum_{l=1}^n \E d_l}
		  - \frac{ \E d_j^w}{\E d_j}\right]
		 \left[ 1 + \mathcal{O}\left( \frac{ 1}{ n} \right) \right] \quad \text{(Assumption \ref{nonodeincontrol})}.
\\ 		 & =  	\beta_j^*
		 \left[ 1 + \mathcal{O}\left( \frac{ 1}{ n} \right) \right]. \label{betaStar2}	
\end{align}
We now will substitute Eq. \eqref{betaStar2} into the result of Lemma \ref{Modularity_reform}. Therefore, first recall from Lemma \ref{Modularity_reform} that
\begin{align*}
 		&\widehat{Q}-\muMod
 	\\&= \left(\sum_{j=1}^n \alpha_j \left[d_j^w - \E d_j^w\right]  + \sum_{j=1}^n \beta_j \left[d_j^b - \E d_j^b\right]\right)  + \mathcal{O}_P\left(\epsilon\right)
\\ &= \left(\sum_{j=1}^n \left(0.5 + \beta_j\right) \left[d_j^w - \E d_j^w\right]  + \sum_{j=1}^n \beta_j \left[d_j^b - \E d_j^b\right]\right)  + \mathcal{O}_P\left(\epsilon\right).
 		\intertext{From Eq. \eqref{betaStar2}, it follows that}
 &= \sum_{j=1}^n \left(0.5 + \beta_j^*\right) \left[d_j^w - \E d_j^w\right]  + \sum_{j=1}^n \beta_j^* \left[d_j^b - \E d_j^b\right]  + \mathcal{O}_P\left(\epsilon\right) 	
 \\ & \quad + \mathcal{O}	\left( \frac{1}{n}\sum_{j=1}^n \beta_j^* \left[d_j - \E d_j\right]\right).
 \end{align*}
 We now address the error term:
 \begin{align*}
  \frac{1}{n}\sum_{j=1}^n \beta_j^* \left[d_j - \E d_j\right]
  & = \frac{1}{n}\sum_{j=1}^n \beta_j^* \sqrt{\E d_j}  \quad \text{(Chenyshev's inequality)}
\\ &= \mathcal{O}_P	\left( \frac{1}{n}\sum_{j=1}^n  \left(\frac{\sum_{l=1}^n \E d_l^w}{2 \sum_{l=1}^n \E d_l} + \frac{ \E d_j^w}{\E d_j}\right) \sqrt{\E d_j} \right)
  \end{align*}
\begin{align*}
 &=  \mathcal{O}_P	\left( \frac{1}{n}\sum_{j=1}^n \left(\frac{\sum_{l=1}^n \E d_l^w}{2 \sum_{l=1}^n \E d_l} \frac{\E d_j}{ \sqrt{\E d_j}} + \frac{\E d_j^w}{\sqrt{\E d_j}}\right) \right)
\\ &=  \mathcal{O}_P	\left( \frac{1}{n \min_l \sqrt{\E d_l}}
\left(\frac{\sum_{l=1}^n \E d_l^w \sum_{j=1}^n \E d_j }{2 \sum_{l=1}^n \E d_l }
+ \sum_{j=1}^n \E d_j^w\right) \right)
\\ &=  \mathcal{O}_P	\left( \frac{1}{n \min_l \sqrt{\E d_l}}
 \sum_{j=1}^n \sum_{i \neq j} \pi_i \pi_j \ingroup \right)
 \\ &=  \mathcal{O}_P	\left( \epsilon\right).
\end{align*}

As a consequence, we conclude the required result of Corollary \ref{APP:LemmabetaStar}; i.e.,
\begin{align*}
 		\widehat{Q} &= \muMod + \left(\sum_{j=1}^n \alpha_j^* \left[d_j^w - \E d_j^w\right]  + \sum_{j=1}^n \beta_j^* \left[d_j^b - \E d_j^b\right]\right)  + \mathcal{O}_P\left(\epsilon\right).
\end{align*}
\end{proof}

\section{Approximation of the bias of modularity}
We state in the main text that the shift of modularity $\muMod$ in  Theorem~\ref{DensityM} Eq.~\eqref{muMod} is equal to the approximate bias $b'$ to leading order; with
\begin{align*}
b' &= \sum_{j=1}^n \sum_{i<j} \left( \E A_{ij} - \frac{ \E d_i d_j}{\E \left\|\bm{d}\right\|_1} \right) \ingroup.
\end{align*}
More formally, we obtain the following Lemma.
\begin{appxlemma}
 		Consider Assumptions \ref{nonodeincontrol} and \ref{sparse}. Then it holds for $\muMod$ in Eq.\ \eqref{muMod} that
\begin{align*} 		
 	  \muMod &= \sum_{j=1}^n \sum_{i<j}  \left(\E A_{ij} - \frac{ \E d_i d_j}{\E \left\|\bm{d}\right\|_1}
 \left[1 + \mathcal{O}\left(\frac{ 1}{n^{3/2}}\right) \right] \right)\ingroup.
\end{align*}
\end{appxlemma}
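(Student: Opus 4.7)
The plan is to expand both sides explicitly in the parameters $\{\pi_i\}$, verify an exact algebraic identity between them, and then bound the resulting residual using Assumptions~\ref{nonodeincontrol}--\ref{over-dispersed}. First I would record the moments implied by Definition~\ref{Def:DegreebasedModel}: $\E d_i = \pi_i(\|\bm{\pi}\|_1 - \pi_i)$, $\E \|\bm{d}\|_1 = \|\bm{\pi}\|_1^2 - \|\bm{\pi}\|_2^2$, and -- using that $A_{ij}$ is the only edge contributing to both $d_i$ and $d_j$ --
\begin{equation*}
\E d_i d_j \;=\; \V A_{ij} \;+\; \pi_i \pi_j\,(\|\bm{\pi}\|_1 - \pi_i)(\|\bm{\pi}\|_1 - \pi_j).
\end{equation*}

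Next I would verify the exact deterministic identity
\begin{equation*}
\E A_{ij}\,\E \|\bm{d}\|_1 \;-\; \E A_{ij}\bigl(\E d_i + \E d_j - \|\bm{\pi}\|_2^2\bigr) \;-\; \E d_i d_j \;=\; \pi_i \pi_j\bigl(\pi_i^2 + \pi_j^2 - \pi_i \pi_j\bigr) \;-\; \V A_{ij},
\end{equation*}
which falls out directly after expanding $(\|\bm{\pi}\|_1 - \pi_i)(\|\bm{\pi}\|_1 - \pi_j)$ and cancelling the $\|\bm{\pi}\|_1^2$ and $\|\bm{\pi}\|_2^2$ contributions. Dividing by $\E \|\bm{d}\|_1$, summing over $i<j$ against $\ingroup$, and recognising Eq.~\eqref{muMod}, this rewrites as
\begin{equation*}
\sum_{j=1}^n \sum_{i<j} \Bigl(\E A_{ij} - \tfrac{\E d_i d_j}{\E \|\bm{d}\|_1}\Bigr) \ingroup \;=\; \muMod \;+\; \sum_{j=1}^n \sum_{i<j} \frac{\E d_i d_j}{\E \|\bm{d}\|_1}\, R_{ij}\, \ingroup,
\end{equation*}
where $R_{ij} := \bigl[\pi_i \pi_j(\pi_i^2 + \pi_j^2 - \pi_i \pi_j) - \V A_{ij}\bigr]\big/\E d_i d_j$. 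So proving the lemma reduces to showing that $R_{ij}$ is uniformly of the stated order $\mathcal{O}(n^{-3/2})$, since it is exactly the multiplicative correction pulled onto $\E d_i d_j/\E\|\bm{d}\|_1$.

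The only quantitative step, and the main obstacle, is establishing this uniform rate. Using Assumption~\ref{over-dispersed} I have $\V A_{ij} = \Theta(\pi_i \pi_j)$, and combined with Assumption~\ref{nonodeincontrol} (which gives $\pi_i/\|\bm{\pi}\|_1 = \mathcal{O}(1/n)$ and hence $(\|\bm{\pi}\|_1-\pi_i)(\|\bm{\pi}\|_1-\pi_j) = (1+o(1))\|\bm{\pi}\|_1^2$) I get $\E d_i d_j = \Theta\bigl(\pi_i \pi_j \|\bm{\pi}\|_1^2\bigr)$. The numerator of $R_{ij}$ is bounded by $\pi_i\pi_j(\pi_i^2+\pi_j^2) + |\V A_{ij}|$, so
\begin{equation*}
|R_{ij}| \;=\; \mathcal{O}\!\left(\frac{\max_l \pi_l^2}{\|\bm{\pi}\|_1^2}\right) \;+\; \mathcal{O}\!\left(\frac{1}{\|\bm{\pi}\|_1^2}\right).
\end{equation*}
By Assumption~\ref{nonodeincontrol}, $\max_l \pi_l^2/\|\bm{\pi}\|_1^2 = (n\max_l \pi_l/\|\bm{\pi}\|_1)^2/n^2 = \mathcal{O}(1/n^2)$; by Assumption~\ref{sparse}, $\|\bm{\pi}\|_1 \ge n\min_l \pi_l = \omega(\sqrt{n})$, so $\|\bm{\pi}\|_1^{-2} = o(1/n)$. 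Together these give the claimed uniform bound absorbable into the $\mathcal{O}(n^{-3/2})$ multiplicative correction on $\E d_i d_j/\E\|\bm{d}\|_1$, and substituting back through the displayed identity yields the lemma.
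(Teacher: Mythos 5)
Your proposal is correct and follows essentially the same route as the paper's proof: expand the moments exactly, use $\E d_i d_j = \E d_i \E d_j + \V A_{ij}$ to recognise the residual as a multiplicative correction on $\E d_i d_j/\E\|\bm{d}\|_1$, and bound that correction by $\mathcal{O}\left(\max_l \pi_l^2/\|\bm{\pi}\|_1^2\right) + \mathcal{O}\left(1/\|\bm{\pi}\|_1^2\right)$ using Assumptions \ref{nonodeincontrol}, \ref{sparse} and \ref{over-dispersed} (the paper's own argument also implicitly uses $\V A_{ij} = \Theta(\pi_i\pi_j)$ despite citing only the first two). Your exact identity is in fact slightly more careful than the paper's display, which drops a harmless $-\pi_i^2\pi_j^2$ term, and your final absorption of the $o(1/n)$ bound into the stated $\mathcal{O}(n^{-3/2})$ mirrors exactly the paper's last step.
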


\begin{proof}
Recall from Theorem \ref{DensityM} Eq.\ \eqref{muMod} that
\begin{align*}
	\muMod & = \sum_{j=1}^n \sum_{i<j}  \frac{\E A_{ij} \left(\E d_i + \E d_j - \left\|\bm{\pi}\right\|_2^2\right)}{\E \left\|\bm{d}\right\|_1} \ingroup
\\		&= \sum_{j=1}^n \sum_{i<j}  \frac{
\pi_i^2 \pi_j \left(\left\|\bm{\pi}\right\|_1- \pi_i\right)
+ \pi_i \pi_j^2 \left(\left\|\bm{\pi}\right\|_1- \pi_j\right)
- \pi_i \pi_j \left\|\bm{\pi}\right\|_2^2}{\E \left\|\bm{d}\right\|_1} \ingroup
\\		&= \sum_{j=1}^n \sum_{i<j} \Biggl( \frac{ \pi_i \pi_j \left\|\bm{\pi}\right\|_1^2
- \pi_i \pi_j \left\|\bm{\pi}\right\|_1^2 + \V A_{ij} - \V A_{ij}}{\E \left\|\bm{d}\right\|_1}
\\ & \qquad \quad
+ \frac{ \pi_i^2 \pi_j \left(\left\|\bm{\pi}\right\|_1- \pi_i\right)
+ \pi_i \pi_j^2 \left(\left\|\bm{\pi}\right\|_1- \pi_j\right)
- \pi_i \pi_j \left\|\bm{\pi}\right\|_2^2}{\E \left\|\bm{d}\right\|_1} \Biggr)\ingroup
  \end{align*}
\begin{align*}
		&= \sum_{j=1}^n \sum_{i<j} \Biggl( \frac{ \pi_i \pi_j \left(\left\|\bm{\pi}\right\|_1^2 - \left\|\bm{\pi}\right\|_2^2\right)
 + \V A_{ij} - \V A_{ij}}{\E \left\|\bm{d}\right\|_1}
\\ & \qquad \quad
- \frac{ \pi_i \pi_j \left\|\bm{\pi}\right\|_1^2
- \pi_i \pi_j \pi_i \left\|\bm{\pi}\right\|_1 + \pi_i^3 \pi_j
- \pi_i \pi_j \pi_j \left\|\bm{\pi}\right\|_1 + \pi_i \pi_j^3}{\E \left\|\bm{d}\right\|_1} \Biggr) \ingroup
\\		&= \sum_{j=1}^n \sum_{i<j} \Biggl( \frac{ \pi_i \pi_j \left(\left\|\bm{\pi}\right\|_1^2 - \left\|\bm{\pi}\right\|_2^2\right)
 + \V A_{ij} - \V A_{ij}}{\E \left\|\bm{d}\right\|_1}
\\ & \qquad \quad
- \frac{ \pi_i \pi_j \left(\left\|\bm{\pi}\right\|_1 - \pi_i\right)
\left(\left\|\bm{\pi}\right\|_1 - \pi_j\right) + \pi_i^3 \pi_j
 + \pi_i \pi_j^3}{\E \left\|\bm{d}\right\|_1} \Biggr)\ingroup
 \\		&= \sum_{j=1}^n \sum_{i<j} \left( \E A_{ij} - \frac{ \E d_i \E d_j
 + \V A_{ij}
 + \pi_i^3 \pi_j  + \pi_i \pi_j^3  - \V A_{ij} }{\E \left\|\bm{d}\right\|_1}\right) \ingroup.
\end{align*}
Recall from Eq.\ \eqref{Cov_degrees} that $\operatorname{cov} \left( d_i , d_j \right)  = \V  A_{ij}$ for $ i \neq j$. Furthermore, it holds that $\E d_i d_j = \E d_i \E d_j + \operatorname{cov} \left( d_i , d_j \right)$. Hence,
\begin{align*}
		&= \sum_{j=1}^n \sum_{i<j}  \left(\E A_{ij} - \frac{ \E d_i d_j
 + \pi_i^3 \pi_j  + \pi_i \pi_j^3  - \V A_{ij} }{\E \left\|\bm{d}\right\|_1} \right)\ingroup
\\ 		&= \sum_{j=1}^n \sum_{i<j} \left( \E A_{ij} - \frac{ \E d_i d_j}{\E \left\|\bm{d}\right\|_1}
 \left[1 + \frac{ \pi_i^3 \pi_j  + \pi_i \pi_j^3  - \V A_{ij} }{\E d_i d_j} \right]\right) \ingroup.
\end{align*}
We now define and analyze the error term:
\begin{align*}
 \epsilon_3 &=\frac{ \pi_i^3 \pi_j  + \pi_i \pi_j^3  - \V A_{ij} }{\E d_i d_j}
 \\ &= \frac{ \pi_i^3 \pi_j  + \pi_i \pi_j^3  - \V A_{ij} }{\E d_i \E d_j+ \V A_{ij}}
 \\  &= \frac{ \pi_i^3 \pi_j  + \pi_i \pi_j^3  - \V A_{ij} }{\pi_i \pi_j \left(\left\|\bm{\pi}\right\|_1 - \pi_i\right)
\left(\left\|\bm{\pi}\right\|_1 - \pi_j\right) + \V A_{ij}}
\\  &= \Theta\left(\frac{ \pi_i^3 \pi_j  + \pi_i \pi_j^3  - \pi_i \pi_j }{\pi_i \pi_j \left\|\bm{\pi}\right\|_1^2}\right) \quad \text{(Assumption \ref{nonodeincontrol})}
\\  &= \Theta\left(\frac{ \pi_i^2 + \pi_j^2  - 1}{ \left\|\bm{\pi}\right\|_1^2}\right)
\\  &= \mathcal{O}\left(\frac{ 1}{\min \left\{n^2, \left\|\bm{\pi}\right\|_1^2\right\}}\right) \quad \text{(Assumption \ref{nonodeincontrol})}
\\  &= \mathcal{O}\left(\frac{ 1}{n^{3/2}}\right). \quad \text{(Assumption \ref{sparse})}
\end{align*}
The required result follows; i.e.,
\begin{align*}	
  \muMod &= \sum_{j=1}^n \sum_{i<j}  \left(\E A_{ij} - \frac{ \E d_i d_j}{\E \left\|\bm{d}\right\|_1}
 \left[1 + \mathcal{O}\left(\frac{ 1}{n^{3/2}}\right) \right]\right) \ingroup. \quad \text{(Assumption \ref{sparse})}
\end{align*}		
\end{proof}

\providecommand*\hyphen{-}

\end{document}